\newcommand{\comment}[1]{}
\newtheorem{thm}{Theorem}[section]
\newtheorem{prop}[thm]{Proposition}
\newtheorem{lem}[thm]{Lemma}
\newtheorem{cor}[thm]{Corollary}
\theoremstyle{definition}
\newtheorem{defn}[thm]{Definition}
\newtheorem{ex}[thm]{Example}
\theoremstyle{remark}
\newtheorem{rem}[thm]{Remark}
\newcommand{\Rr}{\mathbb R} 										
\newcommand{\set}[1]{\left\{#1\right\}}					
\newcommand{\eps}{\varepsilon}									
\newcommand{\tto}{\rightrightarrows}						
\newcommand{\F}{\ensuremath{\mathcal{F}}}
\newcommand{\NN}{\ensuremath{\mathcal{N}}}
\newcommand{\red}{\ensuremath{/\mkern-5mu/}}
\renewcommand{\graph}{\operatorname{Graph}}   
\DeclareMathOperator{\Diff}{Diff}               
\DeclareMathOperator{\Lie}{\mathcal{L}}         
\DeclareMathOperator{\id}{id}    
\renewcommand{\d}{\mathrm d}                    
\newcommand{\Ss}{\mathbb S}											
\newcommand{\X}{\ensuremath{\mathfrak{X}}}			
\DeclareMathOperator{\Ver}{Ver}        
\newcommand{\G}{\mathcal{G}}            
\newcommand{\M}{\mathcal{M}}            
\newcommand{\Mt}{\widetilde{\mathcal{M}}}            
\newcommand{\s}{\mathbf{s}}             
\renewcommand{\t}{\mathbf{t}}           
\renewcommand{\F}{\mathcal{F}}          
\newcommand{\al}{\alpha}                
\newcommand{\be}{\beta}                 
\newcommand{\T}{\mathbb{T}}             
\newcommand{\TM}{\mathbb{T}M}             
\renewcommand{\gg}{\mathfrak{g}}        
\newcommand{\g}{\mathfrak{g}}           
\DeclareMathOperator{\Ker}{Ker}         
\DeclareMathOperator{\im}{Im}           
\DeclareMathOperator{\var}{var}		
\newcommand{\ri}{\overrightarrow}
\renewcommand{\le}{\overleftarrow}
\numberwithin{equation}{section}
\begin{document}
\DeclareGraphicsExtensions{.pdf,.png}

\title[Integrability and Reduction of Hamiltonian Actions]{Integrability and Reduction of Hamiltonian Actions on Dirac Manifolds}
\author{Rui Loja Fernandes}
\address{Department of Mathematics,
University of Illinois at Urbana-Champaign,
1409 W.~Green Street,
Urbana, IL 61801
USA } 
\email{ruiloja@illinois.edu}
\author{Olivier Brahic}
\address{Department of Mathematics, Federal University of Paran\'a.
CP 19081, 81531-980, Curitiba, PR, Brazil.}
\email{brahicolivier@gmail}
\thanks{Partially supported by NSF grant DMS 1308472, and by CNPq 
grant 401253/2012-0.}

\begin{abstract}
For a Hamiltonian, proper and free action of a Lie group $G$ on a Dirac manifold $(M,L)$, with a regular moment map $\mu:M\to \gg^*$, the manifolds $M/G$, $\mu^{-1}(0)$ and $\mu^{-1}(0)/G$ all have natural induced Dirac structures. If $(M,L)$ is an integrable Dirac structure, we show that $M/G$ is always integrable, but $\mu^{-1}(0)$ and $\mu^{-1}(0)/G$ may fail to be integrable, and we describe the obstructions to their integrability.
\end{abstract}

\maketitle

\section{Introduction}             %
\label{sec:introduction}           %

Let $(M,L)$ be a Dirac manifold. If a Lie group $G$ acts on $M$ by Dirac automorphisms, in a proper and free fashion, one may hope that $M/G$ inherits a Dirac structure from $M$. However, this is not true as can be seen by simple examples of Dirac structures associated with closed 2-forms. One needs to impose extra conditions. 

Consider the map $j:L\to\gg^*$ which to a pair $(X,\al)\in L$ associates $\xi\mapsto i_{\xi_M}\al$, where $\xi_M$ denotes the infinitesimal generator associated with $\xi\in\gg$. This map is a Lie algebroid morphism whenever the action is by Dirac automorphisms. We call a Dirac action \textbf{regular} if $j$ is fiberwise surjective, in which case it follows that there is an induced Dirac structure on the quotient $M/G$ such that $L_{M/G}=\pi_*L$. It is then natural to ask if the quotient $(M/G,L_{M/G})$ is integrable whenever $(M,L)$ is integrable. This is indeed the case since one has the following result: 

\begin{thm}[\cite{BC}]
\label{thm:integration:Dirac}
Let $(M,L)$ be an integrable Dirac manifold and assume that $G$ acts on $M$ by Dirac automorphisms. If the action is regular, proper and free, then $(M/G,L_{M/G})$ is an integrable Dirac structure.
\end{thm}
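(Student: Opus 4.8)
The plan is to integrate $L_{M/G}$ by a reduction of the presymplectic groupoid of $(M,L)$. Recall that a Dirac manifold is integrable exactly when its Lie algebroid is integrable, the source--simply--connected integration then carrying a canonical multiplicative presymplectic form; so it suffices to produce a Lie groupoid integrating $L_{M/G}$. Let $(\G\tto M,\Omega)$ be the source--simply--connected presymplectic groupoid of $(M,L)$, so that $A(\G)=L$.

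\emph{Lifting the action.} Each $g\in G$, being a Dirac automorphism of $(M,L)$, induces a Lie algebroid automorphism of $L$, hence by functoriality of integration a unique automorphism of $(\G,\Omega)$; by uniqueness these assemble into a $G$-action on $(\G,\Omega)$ by presymplectic groupoid automorphisms, smoothness being the routine point (lift the local flows of the infinitesimal generators). Since $\s\colon\G\to M$ is equivariant and the action on $M$ is free and proper, so is the lifted action. Moreover, since $G$ preserves the unit section, the generators $\hat\xi$ on $\G$ are tangent to it, where $\hat\xi=\d u(\xi_M)$.

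\emph{From $j$ to a groupoid cocycle.} Regarding $\gg^*$ as an abelian Lie algebroid over $M$ (zero bracket and anchor), the morphism $j\colon L\to\gg^*$ integrates to a groupoid morphism $J\colon\G\to\gg^*$ with $J\circ u=0$ and $\d J|_{A(\G)}=j$. Fibrewise surjectivity of $j$ (regularity) makes $\d J$ surjective along $\s$-fibres, so $0$ is a regular value and $\NN:=J^{-1}(0)$ is a wide Lie subgroupoid with $A(\NN)=\Ker j$; equivariance of $j$ under the coadjoint action makes $J$ equivariant, so $G$ preserves $\NN$ and acts on it freely and properly. One then checks that $J$ is a moment map, $i_{\hat\xi}\Omega=\d J^{\xi}$: both sides are multiplicative closed $1$-forms, so it is enough to match their infinitesimal data along $M$, which — using $\hat\xi=\d u(\xi_M)$, isotropy of the unit section, and the standard description of $L$ in terms of $(\G,\Omega)$ — comes down to the definition of $j$.

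\emph{The reduced groupoid.} Set $\G\red G:=\NN/G$. It is a smooth manifold of dimension $2\dim(M/G)$ with an induced Lie groupoid structure over $M/G$ (all structure maps being equivariant), and $\Omega|_{\NN}$ descends to it since $i_{\hat\xi}\Omega|_{\NN}=\d(J^{\xi}|_{\NN})=0$. Its Lie algebroid is $A(\NN)/G=(\Ker j)/G$, which is identified with $\pi_*L=L_{M/G}$ via $(X,\al)\mapsto(\d\pi\,X,\al)$ — a well-defined Lie algebroid isomorphism precisely because regularity forces $L\cap(\Ker\d\pi\oplus 0)=0$ and $L\cap\big(TM\oplus(\Ker\d\pi)^{\circ}\big)=\Ker j$. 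Hence $\G\red G$ is a presymplectic groupoid integrating $L_{M/G}$, and $(M/G,L_{M/G})$ is integrable. I expect the main obstacle to be the second step — constructing $J$ and establishing the moment-map identity, together with the smoothness of the lifted action; granting these, the concluding reduction is formal, if somewhat delicate.
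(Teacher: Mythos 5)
Your proposal is correct and follows essentially the same route as the paper: lift the $G$-action to the source-simply-connected presymplectic groupoid, integrate $j$ to a groupoid morphism $J$ which is verified to be a moment map by multiplicativity of both sides of $i_{\hat\xi}\Omega=\d J_\xi$ and checking at units, use regularity to see $0$ is a regular value, and take the Hamiltonian quotient $J^{-1}(0)/G$, a presymplectic groupoid over $M/G$ with Lie algebroid $(\Ker j)/G\simeq L_{M/G}$. The only cosmetic difference is that the paper constructs the lifted action concretely on $A$-paths rather than invoking functoriality, and it quotes the earlier reduction proposition for the identification $(\Ker j)/G\simeq\pi_*L$, which you verify directly.
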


This result, on the one hand, generalizes a result in \cite{FeOrRa} for the case of Poisson manifolds. On the other hand, it is a special case of a result proved in \cite{BC} for more general actions. Here we consider only the case of regular actions, since they suffice for our main object of study: Hamiltonian actions. 

An integration of the quotient Dirac manifold $(M/G,L_{M/G})$ can be described as follows: if $(\G(L),\Omega)\tto (M,L)$ is the Weinstein groupoid of $(M,L)$, the $G$-action on $(M,L)$ lifts to a proper and free Hamiltonian $G$-action on $(\G(L),\Omega)$, by groupoid automorphisms, with moment map the unique groupoid morphism $J:\G(L)\to\gg^*$ such that $J_*=j$. The regularity assumption guarantees that $0$ is indeed a regular value of $J$ so that the Hamiltonian quotient $(J^{-1}(0)/G,\Omega_{\text{red}})$ is a presymplectic groupoid integrating $M/G$. Briefly, $\G(L)\red G$ integrates $M/G$.

Let us now turn to our main object of study. By a \textbf{Hamiltonian action} we mean a smooth action of a connected Lie group $G$ on the Dirac manifold $(M,L)$ for which there is a $G$-equivariant map $\mu:M\to\gg^*$ satisfying the \textbf{moment map} condition:
\[
(\xi_M,\d\mu_\xi)\in\Gamma(L),\quad \forall \xi\in\gg.
\]
Here $\mu_\xi:M\to\Rr$ denotes the function $\mu_\xi(\cdot):=\langle \mu(\cdot),\xi\rangle$. We call the quadruple $(M,L,G,\mu)$ a \textbf{Hamiltonian $G$-space}. 

It is easy to give simple examples of proper and free Hamiltonian actions for which the Dirac structure on $M$ does not descend to the quotient $M/G$. So one needs some regularity condition on the moment map for this to happen.  Recall that we can think of $(M,L)$ as a (possible singular) foliation of $M$ by presymplectic leaves. It is easy to see that a Hamiltonian action is always tangent to the leaves and on each leaf $(S,\omega_S)$ the usual moment map condition holds:
\[ i_{\xi_M}\omega_S=\d \mu_\xi|_S. \]
We shall say that the \textbf{moment map is regular at $c\in\gg^*$} if $c$ is a regular value of the restriction $\mu|_S:S\to\gg^*$, for every presymplectic leaf $(S,\omega_S)$. Note that this implies that $c$ is a regular value of $\mu$, in the usual sense, but the converse, in general, does not hold. We will say that the \textbf{moment map is regular} if it is regular at all $c\in\gg^*$. 

It turns out that the moment map of a Hamiltonian action is regular if and only if the Dirac action is regular, and then one has Hamiltonian reduction of Dirac structures in the following form:

\begin{thm}
\label{thm:Dirac:ham:reduction}
Let $(M,L,G,\mu)$ be a Hamiltonian $G$-space and assume that the moment map is regular. If the action is proper and free, then there are unique Dirac structures on $M/G$, $\mu^{-1}(c)$ and $M\red G:=\mu^{-1}(c)/G$ such that for the maps
\[\SelectTips{cm}{}\xymatrix@R=10pt@C=10pt{
                                      & M\ar[dr]^\pi & \\
\mu^{-1}(c)\ar[ur]^{i_c}\ar[dr]_{\pi_c} &     &M/G\\
& M\red G  \ar[ur]_i }\]
one has $L_{\mu^{-1}(c)}=i_c^*L=\pi_c^*L_{M\red G}$, and $L_{M/G}=\pi_*L=i_*L_{M\red G}$.
\end{thm}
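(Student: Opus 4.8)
To prove the theorem the plan is to define the three Dirac structures as backward or forward images of $L$ along the four maps of the diagram, to check that these images are smooth subbundles — the only point that is not automatic — and then to read off Lagrangianity, integrability and the stated compatibilities. I will use repeatedly two facts about these operations: (a) for a map $\varphi\colon N\to M$ and an integrable Dirac structure $L$ on $M$, the backward image $\varphi^{*}L$ is always pointwise Lagrangian, and it is a smooth — hence automatically integrable — Dirac structure on $N$ as soon as $\rho(L)+\d\varphi(TN)$ has locally constant rank along $N$, where $\rho\colon L\to TM$ is the anchor; (b) for a surjective submersion $\pi\colon M\to Q$ and a $\pi$-invariant integrable $L$, the forward image $\pi_{*}L$ is pointwise Lagrangian and is a smooth — hence integrable — Dirac structure on $Q$ as soon as $L\cap(\Ker\d\pi\oplus 0)$ has locally constant rank, and moreover $\pi^{*}\pi_{*}L=L$ precisely when $\Ker\d\pi\oplus 0\subseteq L$. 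Granting smoothness of the images, uniqueness is immediate, since each of $L_{M/G}=\pi_{*}L$, $L_{\mu^{-1}(c)}=i_{c}^{*}L$ and $L_{M\red G}=(\pi_{c})_{*}L_{\mu^{-1}(c)}$ determines the corresponding Dirac structure pointwise.

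The first step is the equivalence announced in the text: $j$ is fibrewise surjective iff the moment map is regular. Over a presymplectic leaf $(S,\omega_{S})$ one has $L_{m}=\{(X,\al)\in T_{m}M\oplus T_{m}^{*}M\colon X\in T_{m}S,\ \al|_{T_{m}S}=i_{X}\omega_{S}\}$, and $\rho(L_{m})=T_{m}S$; since $\xi_{M}$ is tangent to $S$, the leafwise moment map condition gives $j(X,\al)(\xi)=\al(\xi_{M})=\omega_{S}(X,\xi_{M})=-\langle\d(\mu|_{S})_{m}(X),\xi\rangle$. As $L_{m}$ surjects onto $T_{m}S$, surjectivity of $j$ at $m$ is equivalent to surjectivity of $\d(\mu|_{S})_{m}$; quantifying over leaves and points this is exactly regularity of $\mu$. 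Hence the hypotheses of Theorem~\ref{thm:integration:Dirac} hold and $L_{M/G}:=\pi_{*}L$ is a well-defined integrable Dirac structure on $M/G$, which disposes of the $M/G$ line, with $L_{M/G}=\pi_{*}L$ by construction.

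Next, $\mu^{-1}(c)$: set $L_{\mu^{-1}(c)}:=i_{c}^{*}L$. For $m\in S\cap\mu^{-1}(c)$, regularity of $c$ as a value of $\mu|_{S}$ means $\d\mu_{m}$ is onto $\g^{*}$ already on $T_{m}S=\rho(L_{m})$, so $\rho(L_{m})+T_{m}\mu^{-1}(c)=\rho(L_{m})+\Ker\d\mu_{m}=T_{m}M$, of constant rank $\dim M$ along $\mu^{-1}(c)$; by (a), $i_{c}^{*}L$ is a smooth integrable Dirac structure, and it is $G$-invariant because $L$ is. For $M\red G:=\mu^{-1}(c)/G$ (which is $G$-invariant; for general $c$ one replaces $G$ by the isotropy $G_{c}$), the key observation is that the infinitesimal generators of the restricted action are isotropic: the presymplectic leaves of $L_{\mu^{-1}(c)}$ are the components $S'$ of $S\cap\mu^{-1}(c)$ with the restricted forms, and $i_{\xi_{\mu^{-1}(c)}}\omega_{S'}$ is the restriction to $S'$ of $i_{\xi_{M}}\omega_{S}=\d\mu_{\xi}|_{S}$, hence equals $\d(\langle c,\xi\rangle|_{S'})=0$; thus $\xi_{\mu^{-1}(c)}\in\Ker\omega_{S'}$, so $(\xi_{\mu^{-1}(c)}(m),0)\in L_{\mu^{-1}(c),m}$ for all $m$ and all $\xi\in\g$. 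Since the action is free this gives $\Ker\d\pi_{c}\oplus 0=L_{\mu^{-1}(c)}\cap(\Ker\d\pi_{c}\oplus 0)$, of constant rank $\dim\g$; by (b), $L_{M\red G}:=(\pi_{c})_{*}L_{\mu^{-1}(c)}$ is a smooth integrable Dirac structure and $\pi_{c}^{*}L_{M\red G}=L_{\mu^{-1}(c)}$.

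Finally, the relations involving $i\colon M\red G\hookrightarrow M/G$: the square $(i_{c},\pi,\pi_{c},i)$ is a fibre-product square, $\mu^{-1}(c)=M\times_{M/G}(M\red G)$ (here we use that $G$ preserves $\mu^{-1}(c)$), so once all four images are known to be smooth the remaining identity $i^{*}L_{M/G}=L_{M\red G}$ — equivalently, that $i(M\red G)$ is saturated by presymplectic leaves of $M/G$ along which $L_{M/G}$ is the push-forward $i_{*}L_{M\red G}$ — reduces to the pointwise base-change equality $i^{*}(\pi_{*}L)=(\pi_{c})_{*}(i_{c}^{*}L)$. I expect the main obstacle to be the two constant-rank verifications: for $i_{c}^{*}L$ this is exactly where the \emph{leafwise} regularity of $\mu$ — strictly stronger than $c$ being a regular value of $\mu$ itself — is used, through $\rho(L)+T\mu^{-1}(c)=TM$; for $(\pi_{c})_{*}L_{\mu^{-1}(c)}$ it rests on the isotropy of the infinitesimal generators on $\mu^{-1}(c)$, forced by $\mu_{\xi}$ being locally constant there. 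Everything else — Lagrangianity, integrability of the images, the compatibility square and uniqueness — is then formal.
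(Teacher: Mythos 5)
Your construction is sound in outline, but it goes around the square in the opposite direction from the paper, and the one step you declare ``formal'' is precisely where the actual work lies. The paper first builds $L_{M/G}=\pi_*L$ (Proposition \ref{prop:Dirac:reducible}), then makes the key observation that by \eqref{moment:regular} any representative $(X,\pi^*\be)\in L$ of an element of $\pi_*L$ satisfies $\d\mu(X)=0$; hence $\mu^{-1}(c)/G_c$ is a union of presymplectic leaves of $M/G$, $L_{M\red G}$ is defined by restriction, and $L_{\mu^{-1}(c)}$ is obtained by pulling back along the submersion $\pi_c$. You instead define $L_{\mu^{-1}(c)}:=i_c^*L$ and $L_{M\red G}:=(\pi_c)_*L_{\mu^{-1}(c)}$, and your two constant-rank verifications (transversality $\sharp(L)+T\mu^{-1}(c)=TM$ from leafwise regularity, and $\Ker\d\pi_c\oplus 0\subset L_{\mu^{-1}(c)}$ from $\mu_\xi$ being constant on the level set) are correct and, on that side of the diagram, more detailed than the paper. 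But the remaining assertion of the theorem, $L_{M/G}=i_*L_{M\red G}$ (equivalently the base-change identity $i^*(\pi_*L)=(\pi_c)_*(i_c^*L)$, equivalently saturation of $M\red G$ by leaves of $M/G$), is not a formal consequence of the square being a fibre product: given $(\d\pi\cdot X,\be)\in\pi_*L$ at a point of $M\red G$, nothing in general guarantees a representative $X$ tangent to $\mu^{-1}(c)$. Here this follows only from the moment-map relation $j=\d\mu\circ\sharp$ (property \eqref{moment:regular}), which forces \emph{every} representative to satisfy $\d\mu(X)=0$; with that, $i^*(\pi_*L)\subset(\pi_c)_*(i_c^*L)$ and equality follows since both are Lagrangian. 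You have all the ingredients (you derive $\al(\xi_M)=-\d\mu_\xi(X)$ early on), but you must actually run this argument; as written, the statement $L_{M/G}=i_*L_{M\red G}$ is unproved, and it is exactly the step the paper's proof is built around.

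Two smaller points. First, for the $M/G$ stage you invoke Theorem \ref{thm:integration:Dirac}, whose hypotheses include integrability of $(M,L)$ as a Lie algebroid --- not assumed in Theorem \ref{thm:Dirac:ham:reduction} and not needed; the correct reference is Proposition \ref{prop:Dirac:reducible} (regular $+$ proper $+$ free $\Rightarrow$ $\pi_*L$ smooth), and you should avoid the word ``integrable'' for ``closed under the Courant bracket'', since the paper reserves it for Lie algebroid integrability. Second, for $c\neq 0$ the generators $\xi_{\mu^{-1}(c)}$ exist only for $\xi\in\gg_c$, so the isotropy computation and the identification $\Ker\d\pi_c\oplus 0=L_{\mu^{-1}(c)}\cap(\Ker\d\pi_c\oplus 0)$ should be stated for $\gg_c$ throughout (you flag the replacement $G\rightsquigarrow G_c$ but then write ``for all $\xi\in\gg$'').
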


Particular cases of this result can be found in the work of Blankenstein and van der Schaft \cite{BS}, with assumptions that seem to be superfluous. This result should also follow from the general results on reduction of Courant algebroids and Dirac structures described by Bursztyn, Cavalcanti and Gualtieri \cite{BCG} and the results on the so called optimal moment map described in Jotz and Ratiu \cite{JR}.  Since we could not find this exact statement in the literature, we have included a direct proof of it in Section \ref{sec:Dirac} below.

Assume now that $(M,L,G,\mu)$ is a Hamiltonian $G$-space and that $(M,L)$ is an integrable Dirac structure, so that its Weinstein groupoid $(\G(L),\Omega)$ is a presymplectic groupoid integrating $(M,L)$ (see \cite{BCWZ}). We will se that we have a lifted $G$ action on $(\G(L),\Omega)$ which is Hamiltonian with moment map $J:\G(L)\to\gg^*$ given by:
\[  J(x)=\mu\circ\t(x)-\mu\circ\s(x), \]
where $\s$ and $\t$ are the source and target maps of $\G(L)\tto M$. Note that the moment map $J$ is still the integration of the Lie algebroid morphism $j:L\to\gg^*$. Hence the regularity condition on the moment map guarantees that $0$ is a regular value of $J$ and we have the following corollary of Theorem \ref{thm:integration:Dirac}:

\begin{cor}
\label{cor:integ:Dirac:quotient}
Let $(M,L,G,\mu)$ be a Hamiltonian $G$-space and assume that $(M,L)$ is integrable. If the action is proper and free and the moment map is regular then the quotient $M/G$ is integrable and the Hamiltonian quotient $(J^{-1}(0)/G,\Omega_\text{red})$ is a presymplectic groupoid integrating $(M/G,L_{M/G})$.
\end{cor}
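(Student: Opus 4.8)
The plan is to reduce the statement to Theorem~\ref{thm:integration:Dirac} together with the explicit description of the integration of $M/G$ given in the paragraph following it. First, the hypothesis that $\mu$ is regular is, by the remark preceding Theorem~\ref{thm:Dirac:ham:reduction} (proved in Section~\ref{sec:Dirac}), equivalent to the Dirac action being regular; hence all the hypotheses of Theorem~\ref{thm:integration:Dirac} are satisfied, and we conclude at once that $(M/G,L_{M/G})$ is integrable, with $L_{M/G}=\pi_*L$.

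It remains to exhibit the integrating presymplectic groupoid, and here essentially everything is already contained in the construction recalled after Theorem~\ref{thm:integration:Dirac}: by integrability of $(M,L)$ its Weinstein groupoid $(\G(L),\Omega)\tto(M,L)$ exists and is a presymplectic groupoid (\cite{BCWZ}); since $G$ acts by Dirac automorphisms, this action lifts to a proper and free Hamiltonian action on $(\G(L),\Omega)$ by presymplectic groupoid automorphisms, with groupoid moment map the unique groupoid morphism $J\colon\G(L)\to\gg^*$ such that $J_*=j$; and the fiberwise surjectivity of $j$ forces $0$ to be a regular value of $J$, so that $\G(L)\red G=(J^{-1}(0)/G,\Omega_{\text{red}})$ is a presymplectic groupoid whose Lie algebroid, being the reduction of $L$, is exactly $L_{M/G}$. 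Thus $\G(L)\red G$ integrates $(M/G,L_{M/G})$. The point that I expect to require the most care is precisely this last chain --- that $0$ is a regular value and that $J^{-1}(0)/G$ is smooth with the stated Lie algebroid; but by multiplicativity of $J$ it reduces, along the units, to the surjectivity of $j$ and to the infinitesimal form of $L_{M/G}=\pi_*L$, and in any event it is part of the construction just recalled (cf.~\cite{BC}).

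What is genuinely specific to the present (Hamiltonian) setting, and all that is left to verify, is the closed formula $J(x)=\mu\circ\t(x)-\mu\circ\s(x)$. I would first observe that the right-hand side defines a groupoid morphism into $\gg^*$, regarded as the abelian Lie group over a point: for composable $x,y$ one has $\t(xy)=\t(x)$, $\s(xy)=\s(y)$ and $\s(x)=\t(y)$, whence $\mu\circ\t(xy)-\mu\circ\s(xy)=\bigl(\mu\circ\t(x)-\mu\circ\s(x)\bigr)+\bigl(\mu\circ\t(y)-\mu\circ\s(y)\bigr)$; and it is $G$-equivariant since $\mu$, $\s$ and $\t$ are. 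By uniqueness of integration of Lie algebroid morphisms it then suffices to check that this morphism differentiates to $j$, which is a direct computation: differentiating at the unit over $m$ along $v=(X,\alpha)\in L_m$, and using that the anchor of $L$ is realized by $\d\t$ on $\ker\d\s$, the result involves only $\d\mu$ paired with the anchor $X$ of $v$, namely $\xi\mapsto i_X\,\d\mu_\xi$; and the moment map condition $(\xi_M,\d\mu_\xi)\in\Gamma(L)$ together with the isotropy of $L$ for the Courant pairing rewrites $i_X\,\d\mu_\xi$ as $-\,i_{\xi_M}\alpha$, which up to the overall sign fixed by the conventions is $\langle j(v),\xi\rangle$. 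Hence $J$ coincides with $\mu\circ\t-\mu\circ\s$, completing the proof.
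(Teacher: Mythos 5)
Your proposal is correct and takes essentially the same route as the paper: regularity of $\mu$ is equivalent to regularity of the Dirac action, Theorem \ref{thm:integration:Dirac} (proved via the lifted Hamiltonian $G$-action on $(\G(L),\Omega)$ with groupoid moment map integrating $j$, for which $0$ is a regular value) gives that $\G(L)\red G=(J^{-1}(0)/G,\Omega_\text{red})$ integrates $(M/G,L_{M/G})$, and the identification $J=\mu\circ\t-\mu\circ\s$ is checked, exactly as in the paper, by observing that both sides are groupoid morphisms and differentiate along $L=\Ker\d\s$ to the same Lie algebroid morphism $j$. The sign wrinkle you mention (isotropy gives $i_X\d\mu_\xi=-i_{\xi_M}\al$, while \eqref{moment:regular} is written with the opposite sign) is purely a matter of conventions and is immaterial here, since it changes $J$ at most by an overall sign and hence affects neither $J^{-1}(0)$ nor the conclusion.
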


Given a Hamiltonian $G$-space $(M,L,G,\mu)$, Corollary \ref{cor:integ:Dirac:quotient} shows that the quotient $(M/G,L_{M/G})$ is integrable provided $(M,L)$ is integrable and the action is proper, free and regular. We will give an example below which shows that, in general, the Hamiltonian quotient $(M\red G,L_{M\red G})$ is not an integrable Dirac structure, even if $(M,L)$ is integrable. 

However, the obstructions to the integrability of $M\red G$ can be determined explicitly. We introduce for each $m\in M$ a certain group morphism, called the \textbf{$G$-monodromy morphism}: 
\[ \pi_1(G)\to \G(L)_m, \]
whose image $\Mt_m(G)$ we call the \textbf{$G$-monodromy group} at $m$ of the action. These groups control the integrability of $M\red G$. More precisely, set 
\[ \Mt:=\bigcup_{m\in M}\Mt_m(G)\subset\G(L),\quad \Mt_0:=\Mt|_{\mu^{-1}(0)}. \]
Then we have:

\begin{thm}
\label{thm:integ:Ham:quotient}
Let $(M,L,G,\mu)$ be a Hamiltonian $G$-space and assume that $(M,L)$ is integrable. If the action is proper and free and the moment map is regular, then $(M\red G,L_{M\red G})$ is an integrable Dirac structure if and only if $\Mt_0\subset\G(L)$ is an embedded Lie subgroupoid.
\end{thm}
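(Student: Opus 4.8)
The plan is to exhibit a Lie groupoid integrating $L_{M\red G}$ whenever $\Mt_0$ is an embedded Lie subgroupoid, and conversely to read off $\Mt_0$ from any integration of $L_{M\red G}$, the equivalence ultimately coming down to a computation of monodromy groups.

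\medskip\noindent\textbf{Construction of the candidate.} Since $\mu^{-1}(0)$ is $G$-invariant (by equivariance of $\mu$ and $\Ad^*_g 0=0$) and the lifted action on $\G(L)$ preserves $\s,\t$, the set
\[
\H:=\s^{-1}(\mu^{-1}(0))\cap\t^{-1}(\mu^{-1}(0))=J^{-1}(0)\cap\s^{-1}(\mu^{-1}(0))\subset\G(L)
\]
is a $G$-invariant subgroupoid of $J^{-1}(0)$ over the invariant submanifold $\mu^{-1}(0)$, on which $\Omega$ restricts. First I would check, using regularity of $\mu$ and Theorem~\ref{thm:Dirac:ham:reduction}, that the Lie algebroid of $\H$ is canonically $L_{\mu^{-1}(0)}=i_0^*L$: at a unit $m\in\mu^{-1}(0)$ its fibre is $\{(X,\alpha)\in L_m:X\in T_m\mu^{-1}(0)\}$, and the restriction map $(X,\alpha)\mapsto(X,\alpha|_{T_m\mu^{-1}(0)})$ is an isomorphism onto $(i_0^*L)_m$ — injectivity is exactly the assertion that a nonzero $\xi\in\gg$ cannot have $\xi_M(m)\in\ker\omega_{\leaf_m}$, which follows from $\mu|_{\leaf_m}$ being a submersion at $m$. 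Granting that $\H$ is an \emph{embedded} Lie subgroupoid, the lifted $G$-action on it remains proper and free, so $\H/G$ is a Lie groupoid over $\mu^{-1}(0)/G=M\red G$ with Lie algebroid $L_{\mu^{-1}(0)}/G\cong L_{M\red G}$ (the last isomorphism from $\pi_0^*L_{M\red G}=L_{\mu^{-1}(0)}$); hence $(M\red G,L_{M\red G})$ would be integrable. The entire content of the theorem is therefore the assertion that $\H$ is an embedded Lie subgroupoid of $\G(L)$ \emph{if and only if} $\Mt_0$ is.

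\medskip\noindent\textbf{Reduction to $\Mt_0$.} One inclusion is cheap: $\Mt_0\subset\H$, since the $G$-monodromy elements at $m\in\mu^{-1}(0)$ lie in the isotropy $\G(L)_m=\s^{-1}(m)\cap\t^{-1}(m)\subset\H$; so if $\H$ is embedded, localizing shows $\Mt_0$ is too. For the converse I would analyse the source fibre of $\H$ over $m$, namely $(\t|_{\s^{-1}(m)})^{-1}(\leaf_m\cap\mu^{-1}(0))$, a $\G(L)_m$-bundle over the Marsden--Weinstein-type locus $\leaf_m\cap\mu^{-1}(0)$, which is smooth by regularity; the obstruction to $\H$ being smooth and embedded is concentrated along the isotropy directions and is governed precisely by how the discrete subgroups $\Mt_m(G)\subset\G(L)_m$ vary with $m\in\mu^{-1}(0)$. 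Equivalently — and this is the route I would actually carry out — one computes the monodromy groups $\NN_{[m]}(L_{M\red G})$ of the reduced Dirac structure: comparing $A$-paths of $L_{\mu^{-1}(0)}$ with $A$-paths of $L$ modulo the $G$-action, and using that $(M,L)$ is integrable so that the monodromy of $L$ itself is trivial and contributes nothing, the extra periods created by the reduction are exactly the images of $\pi_1(G)$ under the $G$-monodromy morphism. This gives $\NN_{[m]}(L_{M\red G})\cong\Mt_m(G)$, and by the Crainic--Fernandes criterion $L_{M\red G}$ is integrable iff these are locally uniformly discrete — which is exactly the statement that $\Mt_0$ is an embedded Lie subgroupoid.

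\medskip\noindent\textbf{Main obstacle.} The delicate step is this monodromy identification $\NN_{[m]}(L_{M\red G})\cong\Mt_m(G)$: showing that the \emph{only} source of non-smoothness of $\H$ (equivalently, of the Weinstein groupoid of $L_{M\red G}$) is the $G$-monodromy. This demands a careful comparison of $A$-homotopies downstairs (in $L_{M\red G}$) with $A$-homotopies upstairs (in $L$, carrying along a path in $G$): a null-homotopy of a base loop in $M\red G$ must be lifted to a disk in $M$ together with a map of the boundary circle into $G$, and the $G$-monodromy morphism is precisely the obstruction to completing this lift — one must check it descends to $\pi_1(G)$ and that it accounts for \emph{all} of the new monodromy. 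I also expect that verifying $\H$ is genuinely embedded (not merely immersed) and that $\H/G$ is Hausdorff — i.e. that no pathology is hidden in the properness of the lifted $G$-action on $\G(L)$ — will require care, and this is where the regularity hypothesis (which guarantees that lifted action is proper and that $0$ is a regular value of $J$) is essential.
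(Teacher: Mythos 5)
There is a genuine gap, and it sits exactly where you place the ``entire content of the theorem''. Under the standing hypotheses the subgroupoid $\H=J^{-1}(0)|_{\mu^{-1}(0)}$ is \emph{always} an embedded Lie subgroupoid, independently of $\Mt_0$: regularity of $\mu$ makes $J$ a submersion, so $J^{-1}(0)\subset\G(L)$ is embedded; $\mu^{-1}(0)$ is an embedded submanifold which is saturated for $J^{-1}(0)$ (because $\mu\circ\t=\mu\circ\s$ on $J^{-1}(0)$); hence $\H=(\s|_{J^{-1}(0)})^{-1}(\mu^{-1}(0))$ is embedded. So your claimed equivalence ``$\H$ embedded $\Leftrightarrow$ $\Mt_0$ embedded'' is false, and if your construction were correct it would prove that $M\red G$ is \emph{always} integrable, contradicting the paper's example (the quotient $\Ss^2\times\Rr$ with leafwise forms $f(r)\omega_\theta$, non-integrable at critical points of $f$). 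The actual error is the algebroid identification at the end of your construction: the Lie algebroid of $\H$ is $\Ker j|_{\mu^{-1}(0)}\cong L_{\mu^{-1}(0)}$, a bundle of rank $\dim\mu^{-1}(0)$, so the Lie algebroid of $\H/G$ is $L_{\mu^{-1}(0)}/G$, still of rank $\dim\mu^{-1}(0)=\dim(M\red G)+\dim G$, whereas $\rank L_{M\red G}=\dim(M\red G)$. The relation $\pi_0^*L_{M\red G}=L_{\mu^{-1}(0)}$ is a b-Dirac pullback, not a vector-bundle pullback: $L_{\mu^{-1}(0)}$ contains the extra summand $\ker\d\pi_0\oplus 0$, so $L_{\mu^{-1}(0)}/G\not\cong L_{M\red G}$ whenever $\dim G>0$. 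Dividing $\H$ by the one-sided (diagonal) $G$-action simply produces a groupoid that is too big by $\dim G$.

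This is precisely why the paper's proof quotients by $G\times G$ rather than by $G$: one needs the two-sided action $(g_1,g_2)\cdot x=\Psi_\mu(g_1,\t(x))\cdot x\cdot\Psi_\mu(g_2^{-1},\s(x))$, and this only exists as a proper and free action on an honest Lie groupoid when the inner $\tilde{G}$-action $\tilde{\Psi}_\mu$ descends to $G$, i.e.\ when $\Mt$ near $\mu^{-1}(0)$ is an embedded subgroupoid, so that $\G(L)/\Mt$ is a presymplectic groupoid and $\Psi_\mu:G\ltimes M\to\G(L)/\Mt$ exists (Proposition \ref{prop:embedded}); the hypothesis on $\Mt_0$ enters there, not through smoothness of $\H$. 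Your alternative route via the monodromy of $L_{M\red G}$ is closer in spirit to the paper's sketch of the converse, but it is only announced, not carried out — you yourself flag the identification $\NN_{[m]}(L_{M\red G})\cong\Mt_m(G)$ as the main obstacle — and the assertion that integrability of $(M,L)$ makes its own monodromy ``trivial and contribute nothing'' is also incorrect: integrability gives local uniform discreteness of the groups $\NN_m$, not their vanishing, and indeed the paper's restricted groups $\M_m$ contain $\NN_m$. As it stands the proposal establishes neither direction of the equivalence.
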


The obstructions clearly vanish if $G$ has a finite fundamental group. Other conditions for vanishing of the obstructions can be obtain from the description of the $G$-monodromy morphism. We provide a nice geometric description of this morphism when a presymplectic leaf is regular, in terms of variations of the presymplectic areas of disks whose boundary is an orbit of a closed loop in $G$.

When the obstructions vanish we can describe one presymplectic groupoid integrating $(M\red G,L_{M\red G})$ as follows: upon restricting to an appropriate neighborhood of $\mu^{-1}(0)$, we can assume that $\Mt\subset\G(L)$ is an embedded Lie subgroupoid. Then the quotient $\G=\G(L)/\Mt$ is a presymplectic Lie groupoid integrating $(M,L)$ and there is a $G\times G$ action on $\G$ which is Hamiltonian with moment map $\bar{J}:\G\to \gg^*\oplus\gg^*$, given by:
\[ \bar{J}(x)=(\mu\circ\t(x),-\mu\circ\s(x)). \]
This action is proper and free, and $0$ is a regular value of $\bar{J}$. It follows that the Hamiltonian quotient $\G\red G\times G$ is well defined and it is in fact a presymplectic groupoid integrating $M\red G$.

Our results can be summarized in the following table:
\begin{table}[htdp]
\begin{center}
\hskip-.83in
\renewcommand{\arraystretch}{1.5}
\scalebox{.95}{
\begin{tabular}{|p{2.0 cm}|p{3.3cm}|l|}
\hline
\emph{Data}                                 		& \emph{Assumptions}  						& \emph{Integration} \\
\hline\hline
$G$-space $(M,L,G)$				& $(M,L)$ is integrable \& action is regular			& $\G(L)\red G$ integrates $M/G$\\
\hline
Hamiltonian $G$-space $(M,L,G,\mu)$	& $(M,L)$ is integrable \& moment map is regular	& $\G(L)\red G$ integrates $M/G$\\
\cline{2-3}
								& +  vanishing of obstructions 				& $\G(L)\red (G\times G)$ integrates $M\red G$\\
\hline
\end{tabular}}
\hskip-.75in
\label{results}
\end{center}
\caption{Summary of Results}
\end{table}

This paper is organized as follows. In Section \ref{sec:Dirac}, we review the basics of Dirac structures and their symmetry reduction, proving Theorem \ref{thm:Dirac:ham:reduction}. In Section \ref{sec:Dirac:quotients}, we study the integrability of Dirac quotients $M/G$, and we give a direct proof of Theorem \ref{thm:integration:Dirac}. Finally, in Section \ref{sec:Ham:quotients} we introduce the $G$-monodromy morphism and we show that they give the obstructions to the integrability of Hamiltonian quotients, proving Theorem \ref{thm:integ:Ham:quotient}. We also give an example of Hamiltonian quotient which is non-integrable.
\vskip 10 pt

{\bf Acknowledgements.} We would like to thank Henrique Bursztyn and David Iglesias-Ponte for several comments and remarks on a preliminary version of this paper.


\section{Dirac Structures and Reduction}  
\label{sec:Dirac}


In this section we will state and give short proofs of some results concerning reduction of Dirac structures, that we use in this paper and for which we could not find references in the literature. Particular cases of the results stated here can be found in the work of Blankenstein and van der Schaft \cite{BS}, with assumptions that seem to be superfluous. The results stated here should also follow from the general results on reduction of Courant algebroids and Dirac structures described by Bursztyn, Cavalcanti and Gualtieri \cite{BCG} and the results on the so called optimal moment map described in Jotz and Ratiu \cite{JR}.

\subsection{Dirac Structures} Let us start by recalling the definition of Dirac structures, mainly to fix notations and sign conventions. Recall that Dirac structures were introduced by T.~Courant in \cite{Cou} as geometric structures generalizing both Poisson structures and presymplectic structures. Dirac structures on a manifold $M$ are described by certain subundles of the generalized tangent bundle $\TM:=TM\oplus T^*\!M$. This bundle comes equipped with natural pairings $\langle~,~\rangle_\pm$ on
its space of sections $\X(M)\times\Omega^1(M)$, defined by:
\begin{equation}
 \label{eq:pairings}
\langle (X,\al),(Y,\be)\rangle_\pm:=\frac{1}{2}\left(i_Y\al\pm i_X\be\right).
\end{equation}
The space of sections of $\TM$ also carries a skew-symmetric bracket called the \textbf{Courant bracket} \cite{Cou}, which is given by:
\begin{equation}
  \label{eq:Courant:bracket}
  \llbracket(X,\al),(Y,\be)\rrbracket:=
  \bigl([X,Y],\Lie_X\be-\Lie_Y\al+\d\langle (X,\al),(Y,\be)\rangle_-\bigr)
\end{equation}
In general, this bracket \emph{does not} satisfy the Jacobi identity. 

\begin{defn}
Let $L\subset \TM$ be a subundle. We say that:
\begin{enumerate}[(i)]
\item $L$ is an \textbf{almost Dirac structure} on $M$ if it is maximal isotropic with respect to $\langle~,~\rangle_+$,
\item $L$ is a \textbf{Dirac structure} on $M$ if it is an almost Dirac structure and $\Gamma(L)$ is closed under the bracket $\llbracket~,~\rrbracket$.
\end{enumerate}
\end{defn}

For a Dirac structure $L$, the restriction of the bracket $\llbracket~,~\rrbracket$ to $\Gamma(L)$ yields a Lie bracket and if we let $\sharp:L\to TM$ be the restriction of
the projection to $TM$, then $(L,\llbracket~,~\rrbracket,\sharp)$ defines a Lie algebroid. Each leaf of the corresponding characteristic foliation (obtained by integrating the
singular distribution $\im\sharp$) carries a pre-symplectic form $\omega$: if $X,Y\in \im\sharp$, we can choose $\al,\be\in T^*\!M$ such that
$(X,\al),(Y,\be)\in L$ and set:
\begin{equation}
  \label{eq:pre:symp:form}
  \omega(X,Y):=\langle (X,\al),(Y,\be)\rangle_-=i_Y\al=-i_X\be.
\end{equation}
One can check that this definition is independent of choices and that $\omega$ is indeed closed. Thus we may think of a Dirac manifold as a (singular) foliated
manifold by pre-symplectic leaves.

For Dirac structures, there are two types of maps: forward Dirac maps and backward Dirac maps (see \cite{BuRa}). We recall here the definitions:

\begin{defn}If $\phi:(M,L_M)\to(N,L_N)$ is a smooth map between Dirac manifolds. Then:
\begin{enumerate}[(i)]
\item $\phi$ is called a \textbf{forward Dirac map} or simply a \textbf{f-Dirac map}, if:
\begin{multline*}
  L_N=\phi_*L_M:=\{(Y,\be)\in TN\oplus T^*N:\exists X\in TM\text{ with }\\
  \d\phi\cdot X=Y\text{ and }(X,\d\phi^*\be)\in L_M\}.
\end{multline*}
\item $\phi$ is called a \textbf{backward Dirac map} or simply a \textbf{b-Dirac map}, if:
\begin{multline*}
  L_M=\phi^*L_N:=\{(X,\al)\in TM\oplus T^*\!M:\exists \be\in T^*N\text{ with }\\
  (\d\phi)^*\be=\al\text{ and }(\d\phi\cdot X,\be)\in L_N\}.
\end{multline*}
\end{enumerate}
\end{defn}

For a general map $\phi:M\to N$ these two conditions are distinct. However, for a diffeomorphism these two conditions are equivalent and when they hold we call $\phi$ a \textbf{Dirac diffeomorphism}. 

Let us denote by $\Diff(M,L)$ the group of Dirac automorphisms of a Dirac manifold $(M,L)$. We will say that $Z\in\X(M)$ is a \textbf{Dirac vector field} if its flow $\phi_t^Z$ is a 1-parameter group of Dirac diffeomorphisms. Its is easy to see that this happens iff
\[ (X,\al)\in\Gamma(L)\ \Longrightarrow\ 
\Lie_Z (X,\al):=(\Lie_Z X,\Lie_Z\al)\in\Gamma(L).\]
It is clear the space of Dirac vector fields is closed under
the usual Lie bracket of vector fields, and hence form a Lie
subalgebra $\X(M,L)\subset \X(M)$. We should think of $\X(M,L)$ as
the Lie algebra of the (infinite dimensional) Lie group $\Diff(M,L)$. 

The symmetry group of Dirac structures include, besides the Dirac diffeomorphisms, the \textbf{gauge transformations} or \textbf{$B$-transforms}: given a Dirac structure $(M,L)$ and a closed 2-form $B\in\Omega^2(M)$ one defines the $B$-transform of $L$ to be the Dirac structure:
\[ e^BL:=\{(X,\al+i_XB): (X,\al)\in L\} .\]
It follows that the semi-direct product $\Diff(M)\ltimes\Omega^2_{\mathrm{cl}}(M)$ acts on the space of Dirac structures on M. Here, two pairs $(\phi_1,B_1),(\phi_2,B_2)\in\Diff(M)\ltimes\Omega^2_{\mathrm{cl}}(M)$ compose as:
\[ (\phi_1,B_1)\cdot(\phi_2,B_2):=(\phi_1\circ\phi_2, \phi_2^*B_1+B_2) \]

Let us recall some of the main examples of Dirac structures.

\begin{ex}
\label{ex:Dirac:2-forms}
A 2-form $\omega\in\Omega^2(M)$ defines an almost Dirac structure:
\[ 
L_\omega:=\graph(\omega)=\set{(X,\al)\in \TM:\al=i_X\omega}.
\] 
$L_\omega$ is a Dirac structure if and only if $\omega$ is closed. This Dirac structure is characterized by the fact that it has only one presymplectic leaf. The effect of a $B$-transform on $L_\omega$ is to change it to $L_{\omega+B}$. 
\end{ex}

\begin{ex}
\label{ex:Dirac:Poisson}
A bi-vector $\pi\in\X^2(M)$ defines an almost Dirac structure:
\[ 
L_\pi:=\graph(\pi)=\set{(X,\al)\in \TM:X=\pi(\al,\cdot)}.
\] 
$L_\pi$ is a Dirac structure if and only if $[\pi,\pi]=0$, i.e., $\pi$ is a Poisson structure. In this case, the presymplectic leaves are in fact symplectic, and we recover the symplectic foliation of a Poisson manifold. A $B$-transform on $L_\pi$ produces a new Dirac structure with the same underlying foliation, but where the 2-form $\omega_S$ on a leaf $S$ is changed to $\omega_S+B|_S$ (which may or may not be symplectic, so $e^BL_\pi$ mail fail to be Poisson).
\end{ex}

\begin{ex}
\label{ex:Dirac:foliations}
Let $D\subset TM$ be a distribution. Then $D$ together with its annihilator defines an almost Dirac structure:
\[ L_D=D\oplus D^0=\set{(X,\al)\in \TM:X\in D, \al\in D^0}. \]
$L_D$ is a Dirac structure if and only if $D$ is an integrable distribution: $D=T\F$, where $\F$ is a regular foliation. In this case, $L_{\F}=T\F\oplus\nu^*(\F)$ is a Dirac structure whose presymplectic foliation is $\F$ itself, where the leaves have zero presymplectic forms. A $B$-transform has the effect of adding to the leaves a pre-symplectic form, namely the restriction of $B$ to each leaf.
\end{ex}

\begin{ex}
We can also combine all the constructions above as follows: let $D\subset TM$ be a distribution, let  $\omega\in\Omega^2(M)$ be a 2-form and let  $\pi\in\X^2(M)$ be a bi-vector field. If $\pi^\sharp\circ\omega^\flat(D)\subset D$, then:
\[ L:=\set{(X+\pi^{\sharp}(\al),i_X\omega+\al)\in \TM:X\in D, \al\in D^0}, \]
defines an almost Dirac structure. If $D$ is integrable, $\omega$ is closed and $\pi$ is Poisson, then $L$ is a Dirac structure. 
\end{ex}

\subsection{Symmetries of Dirac Structures}
Let $G$ be a Lie group acting on a Dirac manifold $(M,L)$. We denote by $\xi_M\in\X(M)$ the infinitesimal generator associated with an element in the Lie algebra $\xi\in\gg$. When $G$ acts by Dirac diffeomorphisms on $(M,L)$ we will call the action a \textbf{Dirac action}. From the discussion in the previous paragraph it is clear what is the infinitesimal version of a Dirac action:

\begin{prop}
For a Dirac action of $G$ on $(M,L)$:
\begin{equation}
\label{infin:generator}
(X,\al)\in L\ \Longrightarrow\ (\Lie_{\xi_M}X,\Lie_{\xi_M}\al)\in L, \quad \forall \xi\in\gg.
\end{equation}
Conversely, if $G$ is a connected Lie group and this condition holds then $G$ acts by Dirac diffeomorphisms on $(M,L)$.
\end{prop}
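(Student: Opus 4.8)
The plan is to reduce both implications to two facts already available in the excerpt: the characterization, recorded above, that a vector field $Z$ lies in $\X(M,L)$ (i.e.\ has a flow by Dirac diffeomorphisms) precisely when $\Lie_Z$ preserves $\Gamma(L)$; and the standard identification of the flow of an infinitesimal generator $\xi_M$ with the action of the one-parameter subgroup $t\mapsto\exp(t\xi)$ (up to a sign depending on conventions).

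For the direct implication, I would fix $\xi\in\gg$ and note that, since the $G$-action is by Dirac diffeomorphisms, each $\exp(t\xi)$ acts as a Dirac diffeomorphism; as the action of $\exp(t\xi)$ is exactly the time-$t$ flow $\phi_t^{\xi_M}$, the vector field $\xi_M$ has a flow consisting of Dirac diffeomorphisms, hence $\xi_M\in\X(M,L)$. By the characterization of Dirac vector fields, this is precisely the assertion that $(X,\al)\in\Gamma(L)$ implies $(\Lie_{\xi_M}X,\Lie_{\xi_M}\al)\in\Gamma(L)$, which is \eqref{infin:generator} (with $(X,\al)$ read as a local section of $L$ and the conclusion read as a statement about the resulting section).

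For the converse, I would assume \eqref{infin:generator} and that $G$ is connected. By the same characterization, each $\xi_M$ is a Dirac vector field, so its flow $\phi_t^{\xi_M}$ --- which is complete, being the restriction of the globally defined $G$-action, so that no completeness hypothesis is required --- is a one-parameter group of Dirac diffeomorphisms. Thus every element $\exp(\xi)$ acts by a Dirac diffeomorphism, and since Dirac diffeomorphisms are closed under composition, so does every element of the subgroup of $G$ generated by $\exp(\gg)$. A connected Lie group is generated by any neighborhood of the identity, in particular by $\exp(\gg)$, so this subgroup is all of $G$, and the $G$-action is a Dirac action.

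The argument is essentially bookkeeping and I do not expect a genuine obstacle; the only points deserving care are matching sign conventions in the identification of the flow of $\xi_M$ with the $\exp(t\xi)$-action, and being explicit that the pointwise-looking statement \eqref{infin:generator} is to be interpreted on local sections, so that it coincides with invariance of $\Gamma(L)$ under $\Lie_{\xi_M}$.
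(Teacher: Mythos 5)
Your argument is correct and is exactly the route the paper intends: the paper states this proposition without proof, as an immediate consequence of the preceding characterization of Dirac vector fields (those whose Lie derivative preserves $\Gamma(L)$), combined with the identification of the flow of $\xi_M$ with the action of $\exp(t\xi)$ and the fact that a connected Lie group is generated by the image of the exponential map. Your write-up simply makes this bookkeeping explicit, including the correct reading of \eqref{infin:generator} as a statement about sections.
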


There is another useful way of translating the infinitesimal condition \eqref{infin:generator} for $G$ to act by Dirac automorphisms. For that, given any action of $G$ on $(M,L)$ we will denote by $j:L\to\gg^*$ the map:
\[ (X,\al)\stackrel{j}{\longmapsto} (\xi\mapsto i_{\xi_M}\al). \]
Then:

\begin{prop}
Let $G$ act on $(M,L)$. Then \eqref{infin:generator} holds if and only if $j:L\to\gg^*$ is a Lie algebroid morphism (where $\gg^*$ is considered as an abelian Lie algebra).
\end{prop}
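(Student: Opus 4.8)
The plan is to show directly that the Leibniz-type condition \eqref{infin:generator} is equivalent to $j$ being a Lie algebroid morphism into the abelian Lie algebra $\gg^*$. Since $\gg^*$ is viewed as a Lie algebroid over a point (with zero anchor and zero bracket), being a Lie algebroid morphism amounts to two things: (a) $j$ is linear over the constant map $M\to\{*\}$, which is automatic; and (b) $j$ intertwines the Courant bracket of sections of $L$ with the Lie bracket of $\gg^*$-valued functions, which, since the target bracket is zero, really just says that $\xi\circ j$ is a \emph{bracket-constant} function: more precisely, for $\xi\in\gg$ fixed, the $\Rr$-valued function $m\mapsto \langle j(e_1)(m),\xi\rangle$ depends on a section $e_1=(X,\al)\in\Gamma(L)$ in such a way that $\langle j\llbracket e_1,e_2\rrbracket,\xi\rangle=0$ whenever $e_1,e_2$ are sections whose $j$-images are ``flat'' — I will make this precise using the standard description of Lie algebroid morphisms to $\gg^*$ as in the theory of moment maps for Lie algebroids. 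The cleanest formulation: $j:L\to\gg^*$ is a morphism iff for every $\xi\in\gg$, the function $\mu_\xi^L:=\langle j(\cdot),\xi\rangle\in\Gamma(L^*)$, regarded as a fiberwise-linear function on $L$, satisfies $\d_L\mu_\xi^L=0$, where $\d_L$ is the Lie algebroid differential of $(L,\llbracket\,,\,\rrbracket,\sharp)$.

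First I would unwind $\mu_\xi^L$. For a section $e=(X,\al)\in\Gamma(L)$ one has $\langle\mu_\xi^L,e\rangle=i_{\xi_M}\al$. So $\mu_\xi^L$ is the constant (in $e$) evaluation pairing $L\ni(X,\al)\mapsto i_{\xi_M}\al$. Next, compute $\d_L\mu_\xi^L$ on two sections $e_1=(X_1,\al_1)$, $e_2=(X_2,\al_2)$ of $L$ by the Cartan formula for the algebroid differential:
\[
(\d_L\mu_\xi^L)(e_1,e_2)=\sharp(e_1)\cdot\langle\mu_\xi^L,e_2\rangle-\sharp(e_2)\cdot\langle\mu_\xi^L,e_1\rangle-\langle\mu_\xi^L,\llbracket e_1,e_2\rrbracket\rangle.
\]
Since $\sharp(e_i)=X_i$ and using the explicit Courant bracket \eqref{eq:Courant:bracket}, the right-hand side becomes $X_1\cdot(i_{\xi_M}\al_2)-X_2\cdot(i_{\xi_M}\al_1)-i_{\xi_M}(\Lie_{X_1}\al_2-\Lie_{X_2}\al_1+\d\langle e_1,e_2\rangle_-)$. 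Using $i_{\xi_M}\d f=\Lie_{\xi_M}f-\d(i_{\xi_M}f)=\Lie_{\xi_M}f$ on functions and the Leibniz identities $\Lie_{X}i_{\xi_M}=i_{\xi_M}\Lie_X+i_{[X,\xi_M]}$, a short computation collapses this to $-i_{[X_1,\xi_M]}\al_2+i_{[X_2,\xi_M]}\al_1 + (\text{terms that are } \langle\mu_\xi^L,\cdot\rangle$-pairings$)$, which one recognizes — after using that $L$ is isotropic for $\langle\,,\,\rangle_+$ — as exactly the pairing $\langle(\Lie_{\xi_M}X_1,\Lie_{\xi_M}\al_1),e_2\rangle_+$ up to sign. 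Hence $(\d_L\mu_\xi^L)(e_1,e_2)=\pm\langle\Lie_{\xi_M}e_1,e_2\rangle_+$ for all $e_1,e_2\in\Gamma(L)$.

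Finally I would conclude: if \eqref{infin:generator} holds, then $\Lie_{\xi_M}e_1\in\Gamma(L)$, and since $L$ is maximal isotropic for $\langle\,,\,\rangle_+$, the pairing $\langle\Lie_{\xi_M}e_1,e_2\rangle_+$ vanishes for all $e_2\in\Gamma(L)$; thus $\d_L\mu_\xi^L=0$ and $j$ is a morphism. Conversely, if $j$ is a morphism then $\langle\Lie_{\xi_M}e_1,e_2\rangle_+=0$ for all $e_1,e_2\in\Gamma(L)$; since $L$ is \emph{maximal} isotropic, this forces $\Lie_{\xi_M}e_1\in\Gamma(L)$, i.e. \eqref{infin:generator}. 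I expect the main obstacle to be purely bookkeeping: correctly tracking the Lie-derivative/interior-product commutators and the sign in $\langle\,,\,\rangle_-$ versus $\langle\,,\,\rangle_+$ so that the right-hand side of the Cartan formula really does reassemble into the $\langle\,,\,\rangle_+$-pairing with $\Lie_{\xi_M}e_1$; no conceptual difficulty arises once the maximal isotropy of $L$ is invoked at the end. An alternative, more economical route avoids the differential altogether: test \eqref{infin:generator} and the morphism condition on a local frame of $\Gamma(L)$ and on constant elements $\xi\in\gg$, reducing both to the single identity $\Lie_{\xi_M}\langle e,e'\rangle_+ = \langle\Lie_{\xi_M}e,e'\rangle_+ + \langle e,\Lie_{\xi_M}e'\rangle_+$ combined with $L^{\perp}=L$; I would mention this as a remark but carry out the $\d_L$ computation as the main proof since it makes the ``Lie algebroid morphism'' phrasing transparent.
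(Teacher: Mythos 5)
Your proposal is correct and follows essentially the same route as the paper: both reduce the morphism condition to commutation with the algebroid differentials, note the degree-$0$ part is trivial, and show that the degree-$1$ condition $\d_L(j^*\xi)=0$ unwinds via the Courant bracket to the vanishing of $\langle \Lie_{\xi_M}(X_1,\al_1),(X_2,\al_2)\rangle_+$ on sections of $L$, after which maximal isotropy gives the equivalence with \eqref{infin:generator}. The only cosmetic point is that, after using isotropy, your Cartan-formula computation yields $-2\langle \Lie_{\xi_M}e_1,e_2\rangle_+$ rather than this pairing ``up to sign,'' which of course does not affect the vanishing argument.
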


\begin{proof}
The map $j$ is a Lie algebroid morphism if and only if it commutes with the Lie algebroid differentials:
\[ \d j^*=j^*\d .\]
and it is enough to check this condition in degrees $0$ and $1$. 

Since $\gg^*$ is a Lie algebroid over $\{*\}$ this condition is trivial in degree $0$. On the other hand, in degree $1$, given $\xi\in\gg$ we view it as 1-form $\xi: \gg^*\to\Rr$, and a straightforward computation gives:
\begin{multline*} 
\d(j^*\xi)((X,\al),(Y,\be))=(j^*\d\xi)((X,\al),(Y,\al)) \\
\Longleftrightarrow\quad \langle (\Lie_{\xi_M}X,\Lie_{\xi_M}\al),(Y,\be)\rangle_+=0
\end{multline*}
for all $(X,\al),(Y,\be)\in\Gamma(L)$. So $j$ is a Lie algebroid morphism if and only if \eqref{infin:generator} holds.
\end{proof}

The morphism $j:L\to\gg^*$ is a kind of moment map. In fact, in the case where $L$ is the Dirac structure associated with a Poisson structure, we have
$L=T^*M$, and $j$ is precisely the moment map for the cotangent lift of the action. Another facet of this, which we will discuss later, happens when $(M,L)$ is integrable, in which case $j$ is the infinitesimal version of a moment map $J:\G\to\gg^*$ defined on the presymplectic groupoid integrating $(M,L)$.

\subsection{Regular Dirac actions}
Given a Dirac action of $G$ on $(M,L)$ which is proper and free, so that $M/G$ is a smooth manifold, we may ask if there exists a Dirac structure $L_{M/G}$ such that the quotient map $\pi:M\to M/G$ if an $f$-Dirac map. At each point $[p]\in M/G$ the fiber of $L_{M/G}$ is completely determined by the condition that $q$ is f-Dirac:
\begin{equation}
\label{red:Dirac}
L_{M/G}|_{[p]}:=\pi_*L|_{[p]}
=\{(\d_p\pi\cdot X,\al)\in \T_{[p]}M/G: (X,(\d_p\pi)^*\al)\in L|_p\}.
\end{equation}
Each of these vector spaces are maximal isotropic in $\T M/G$. If they form a smooth vector bundle then we obtain an almost Dirac structure which is automatically integrable. So the only issue about reduction of Dirac structures under symmetry is whether \eqref{red:Dirac} is a smooth bundle. 

\begin{ex}
\label{ex:discrete}
If $G$ is a discrete Lie group that acts properly and freely on $(M,L)$ then the Dirac structure is always reducible: $\pi: M\to M/G$ is a local diffeomorphism so it is clear that \eqref{red:Dirac} is a smooth bundle. 
\end{ex}

\begin{ex}
If $\pi$ is a Poisson structure in $M$ which is invariant under a proper and free action of a Lie group $G$, then there is an induced Poisson structure $\pi_{M/G}$ in $M/G$. This corresponds in Dirac language to the fact that the Dirac structure $L_{\pi}$ is $G$-invariant, it is always reducible, and the quotient is still given by a Poisson structure.
\end{ex}

Although Dirac structures associated with Poisson structures are always reducible, this is not the case with our types of Dirac structures as it is illustrated by the following simple example.

\begin{ex}
Let $M=\Rr^2$ and let $L_D=D\oplus D^0$ be the Dirac structure associated with the 1-dimensional distribution $D=\langle x\frac{\partial}{\partial {x}}+\frac{\partial}{\partial {y}}\rangle$. The action of $G=\Rr$ on $\Rr^2$ by translations in the $y$-direction is Dirac. The projection \eqref{red:Dirac} of $L_D$ along $\pi:\Rr^2\to \Rr^2/G=\Rr$, $(x,y)\mapsto x$, is:
\[
(\pi_*L_D)|_x=\left\{\begin{array}{r}
\{(0,a\,\d x)\in \T\Rr: a\in\Rr\},\quad\text{ if }x=0,\\
\\
\{(a\,\frac{\partial}{\partial {x}},0)\in \T\Rr: a\in\Rr\},\quad\text{ if }x\ne0,\\
\end{array}\right.
\]
which is clearly not a smooth subundle.
\end{ex} 

In the previous example, the action moves a leaf to a different leaf. It is easy to see that for Dirac structures associated with an integrable distribution $L_D$, the Dirac structure is reducible whenever the action is tangent the leaves. Our next example shows that for Dirac structures associated with closed 2-forms $L_\omega$, the Dirac structure may fail to be reducible, even if the action is tangent to the leaves.

\begin{ex}
\label{ex:hamiltonian:singular}
Let $M=\Rr^2$ with the Dirac structure $L_\omega$ associated with the closed 2-form $\omega=x\d x\wedge \d y$. Again, the action of $G=\Rr$ on $\Rr^2$ by translations in the $y$-direction is Dirac. The projection \eqref{red:Dirac} of $L_\omega$ along $\pi:\Rr^2\to \Rr^2/G=\Rr$, $(x,y)\mapsto x$, is:
\[
(\pi_*L_\omega)|_x=\left\{\begin{array}{r}
\{(a\,\frac{\partial}{\partial {x}},0)\in \T\Rr: a\in\Rr\},\quad\text{ if }x=0,\\
\\
\{(0,a\,\d x)\in \T\Rr: a\in\Rr\},\quad\text{ if }x\ne0,\\
\end{array}\right.
\]
which is clearly not a smooth subundle.
\end{ex}

Next we study a natural condition that guarantees that a Dirac structure is reducible.

\begin{defn}
\label{defn:regular}
A Dirac action of $G$ on $(M,L)$ is called a \textbf{regular Dirac action} if $j:L\to\gg^*$ is fiberwise surjective.
\end{defn}

This condition is reminiscent of the usual regularity condition for a moment map, and we will return to this idea later when we study Hamiltonian actions and integrability. In fact, it is easy to see that a regular Dirac action is always locally free. When a regular action is proper and free we obtain:

\begin{prop}
\label{prop:Dirac:reducible}
Let $G$ act on $(M,L)$ and assume that the action is a regular Dirac action which is proper and free. Then the Dirac structure is reducible.
\end{prop}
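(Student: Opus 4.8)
The plan is to reduce the statement to a local question and then trivialize the action near an orbit so that the quotient $\TM/G$ becomes a concrete product, in which the reduced fibers are visibly the fibers of a smooth subbundle. First I would observe that the property of being a smooth subbundle of $\T(M/G)=T(M/G)\oplus T^*(M/G)$ is local on $M/G$, so it suffices to check smoothness of \eqref{red:Dirac} in a neighborhood of an arbitrary orbit. Since the action is proper and free, the orbit has a $G$-invariant tubular neighborhood $G$-equivariantly diffeomorphic to $G\times S$ for a slice $S$, with $G$ acting by left translation on the first factor and $\pi:G\times S\to S$ the projection; thus we may as well assume $M=G\times S$ and $M/G=S$.

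Next I would use the regularity hypothesis. Because $j:L\to\gg^*$ is a (Lie algebroid) morphism which is fiberwise surjective, its kernel $K:=\Ker j$ is a smooth vector subbundle of $L$ of corank $\dim\gg$, and it is a Lie subalgebroid since $\gg^*$ is abelian and $j$ is a morphism. The key point is to identify $\pi_*L|_{[p]}$ with (the image under $\d\pi\oplus(\d\pi^*)^{-1}$ of) the fiber $K_p$: unwinding \eqref{red:Dirac}, a pair $(\d_p\pi\cdot X,\al)$ lies in $\pi_*L|_{[p]}$ iff there is $(X,(\d_p\pi)^*\al)\in L_p$, and since $(\d_p\pi)^*$ is injective with image the annihilator of the vertical (orbit) directions, the condition $(\d_p\pi)^*\al$ is well-defined forces $\al$ to be "horizontal", which by the isotropy of $L$ and surjectivity of $j$ pins down exactly the pairs coming from $K_p$. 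So $\pi_*L$ is the image of $K$ under the fiberwise linear, fiberwise injective bundle map $K\to\T S$ covering $\pi$; being a pushforward of a vector bundle along a submersion by a map of constant rank, it is a smooth subbundle. Its maximal isotropy in $\T S$ is then automatic (it is noted in the text that each fiber $\pi_*L|_{[p]}$ is maximal isotropic), and, as already remarked after \eqref{red:Dirac}, any almost Dirac structure of the form $\pi_*L$ with $\pi$ a submersion is automatically closed under the Courant bracket, hence a genuine Dirac structure; so $L_{M/G}:=\pi_*L$ does the job.

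The step I expect to be the main obstacle is the constant-rank/well-definedness argument in the middle: one must check carefully that over the chart $G\times S$ the assignment $p\mapsto K_p$, transported to $\T S$, does not drop or jump rank, i.e. that the "horizontal part" of $L$ transverse to the orbit directions varies smoothly. This is exactly where the previous bad examples fail—there $j$ is not fiberwise surjective, $K$ is not a subbundle, and the transported family is not locally trivial. Concretely I would argue: let $V\subset TM$ be the (trivial, rank $\dim\gg$) bundle spanned by the $\xi_M$; regularity says $j|_L$ surjects onto $\gg^*$ fiberwise, equivalently the pairing between $L$ and $V$ (via $(X,\al),\ \xi\mapsto i_{\xi_M}\al$) is fiberwise nondegenerate on the $\gg^*$ side, so $K=\Ker j$ is a subbundle and $L=K\oplus C$ for a complement $C$ pairing nondegenerately with $V$; one checks $(\d\pi)^*$ identifies $K$ fiberwise with its image and that this image, as $p$ ranges over a $G$-orbit, is $G$-invariant and hence descends. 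I would also double-check the sign/convention details in \eqref{eq:pre:symp:form}–\eqref{red:Dirac} so that the claimed equalities $L_{M/G}=\pi_*L$ are literally the ones in the statement, but those are routine. Finally I note that this proposition is the "regular, proper, free" special case of Theorem~\ref{thm:integration:Dirac} at the level of almost Dirac structures, so its integrability assertion needs nothing beyond the observation already made in the text.
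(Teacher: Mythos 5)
Your proposal is correct and follows essentially the same route as the paper: regularity makes $\Ker j\subset L$ a smooth $G$-invariant subbundle, and $\pi_*L$ is identified fiberwise with (the descent of) $\Ker j$, which gives smoothness, with involutivity and maximal isotropy being automatic as already noted after \eqref{red:Dirac}. The only cosmetic difference is that you localize via a slice $M\simeq G\times S$ before descending, whereas the paper simply passes to the quotient bundle $\Ker(j)/G\simeq \pi_*L$ directly.
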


\begin{proof}
It follows from Example \ref{ex:discrete} that we can assume that $G$ is a connected Lie group.

In order to prove that $ \pi_*L$, given by \eqref{red:Dirac}, is a smooth bundle we observe that the regularity assumption implies that $\Ker(j)\subset L$ is a subundle. Now observe that the action of $G$ on $\TM$ leaves this subundle invariant, and it follows from \eqref{moment:regular}, that we have a $G$-invariant surjective bundle map:
\[ \SelectTips{cm}{}\xymatrix{
\Ker(j)\ar[r]\ar[d] & \pi_*L\ar[d]\\
M\ar[r]& M/G} \] 
By passing to quotient, we obtain an isomorphisms $\Ker(\mu)_*/G\simeq \pi_*L$, therefore showing that $\pi_*L$ is a smooth bundle. This proves that there exist a unique Dirac structure $L_{M/G}$ such that the quotient map $M\to M/G$ is f-Dirac.

\end{proof}

The regularity condition is sufficient, but not necessary, to guarantee that $L$ is reducible: for any proper and free action on a manifold equipped with the zero 2-form $\omega\equiv0$, the Dirac structure $L_\omega$ is reducible, while the map $j$ is identically zero. However, the regularity condition is sufficient for us and, in particular, to deal with Hamiltonian actions to which we turn next.

\subsection{Hamiltonian actions}
We are interested in Hamiltonian actions in the following sense:

\begin{defn}
We call a Dirac action of $G$ on $(M,L)$ a \textbf{Hamiltonian action} if there exist a $G$-equivariant map $\mu:M\to\gg^*$ such that the following condition holds for all $\xi\in\gg$:
\begin{equation}
\label{eq:moment:map}
(\xi_M,\d\mu_\xi)\in\Gamma(L)
\end{equation}
where $\mu_\xi:M\to\Rr$ denotes the function $m\mapsto\langle \mu(m),\xi\rangle$. The quadruple $(M,L,G,\mu)$ is called a \textbf{Hamiltonian $G$-space}. 
\end{defn}

Note that the moment map condition \eqref{eq:moment:map} implies that \eqref{infin:generator} holds, and so the action of the connected component of the identity $G^0$ is by Dirac automorphisms. We have added the assumption that $G$ acts by Dirac automorphisms since the group $G$ may fail to be connected. The following result is a straightforward consequence of the $G$-equivariance of the moment map:

\begin{prop}
For a Hamiltonian $G$-space $(M,L,G,\mu)$ the moment map $\mu:M\to\gg^*$ is a f-Dirac map, where on $\gg^*$ we consider the Dirac structure associated with the canonical linear Poisson structure.
\end{prop}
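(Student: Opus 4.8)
The plan is to show that $\mu$ is an f-Dirac map by directly unwinding the definition of $\mu_*L$ at a point and identifying it with the graph of the linear Poisson structure on $\gg^*$. Recall that $\gg^*$ carries the Lie–Poisson structure $\pi_{\gg^*}$, whose Dirac structure $L_{\gg^*}=\graph(\pi_{\gg^*})$ has fiber at $c\in\gg^*$ equal to $\{(\ad^*_\xi c,\xi):\xi\in\gg\}$, where we use the canonical identification $T_c\gg^*\cong\gg^*$ and $T_c^*\gg^*\cong\gg$. So the content of the proposition is the equality $\mu_*L|_c = \graph(\pi_{\gg^*})|_c$ for every $c$ in the image of $\mu$; since both sides are maximal isotropic subspaces of $\T_c\gg^*$ (the left-hand side being maximal isotropic because forward images of Dirac structures along any smooth map are maximal isotropic fiberwise, a standard fact), it suffices to prove one inclusion, say $\graph(\pi_{\gg^*})|_c\subseteq \mu_*L|_c$.

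First I would fix $m\in M$ with $\mu(m)=c$ and take an arbitrary $\xi\in\gg$; I need to exhibit $X\in T_mM$ with $\d_m\mu\cdot X = \ad^*_\xi c$ and $(X,(\d_m\mu)^*\xi)\in L|_m$. The natural candidate is $X=\xi_M(m)$, the infinitesimal generator. The moment map condition \eqref{eq:moment:map} gives precisely $(\xi_M,\d\mu_\xi)\in\Gamma(L)$, and one checks that $(\d_m\mu)^*\xi = \d_m(\mu_\xi)$ under the identification of $\xi$ with the linear function $\langle\,\cdot\,,\xi\rangle$ on $\gg^*$; this is the pointwise statement that pulling back the differential of a linear coordinate function equals the differential of the corresponding component of $\mu$. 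So $(\xi_M(m),(\d_m\mu)^*\xi)\in L|_m$, giving the second condition. For the first condition I would use $G$-equivariance of $\mu$: differentiating $\mu(g\cdot m)=\Ad^*_{g^{-1}}\mu(m)$ in $g$ at the identity in the direction $\xi$ yields $\d_m\mu\cdot\xi_M(m) = -\ad^*_\xi \mu(m) = -\ad^*_\xi c$. A sign will appear here depending on the convention for the infinitesimal generator and for the coadjoint action; I would absorb it by either choosing $X = -\xi_M(m)$ or adjusting the sign convention for $\pi_{\gg^*}$, and the moment map condition \eqref{eq:moment:map} is linear in $\xi$ so the sign propagates consistently to the second condition as well. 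Running over all $\xi\in\gg$ then gives the desired inclusion.

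The main (and really only) obstacle is bookkeeping of signs and the several canonical identifications in play: the identification $T_c^*\gg^*\cong\gg$, the sign in the definition of $\xi_M$, the sign in the Lie–Poisson bracket on $\gg^*$ (i.e. $\{f,g\}=\pm\langle c,[\d f,\d g]\rangle$), and the sign in the $\pm$-pairings \eqref{eq:pairings} that enter the very definition of a Dirac structure and hence of $\graph(\pi)$. None of these involves any analysis — it is purely a matter of choosing conventions so that "the canonical linear Poisson structure" is the one for which the argument above closes; the referenced fact that $j:L\to\gg^*$ is the moment map for the cotangent lift in the Poisson case (where $L=T^*M$ and $\mu$ is literally the cotangent-lift moment map) is a useful sanity check that the conventions are internally consistent. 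Once the conventions are pinned down, the proof is the two-line computation above together with the maximal-isotropy dimension count to upgrade the inclusion to an equality.
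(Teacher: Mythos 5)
Your proposal is correct and matches the paper's intent: the paper gives no written proof, stating only that the result is a straightforward consequence of $G$-equivariance, and your argument is exactly that straightforward route — equivariance gives $\d_m\mu\cdot\xi_M(m)=\pm\ad^*_\xi\mu(m)$, the moment map condition puts $(\xi_M(m),(\d_m\mu)^*\xi)$ in $L_m$, and maximal isotropy of $\mu_*L$ upgrades the inclusion to equality. The only quibble is that your fallback of replacing $X$ by $-\xi_M(m)$ does not actually flip the sign of the resulting bivector (the pair flips as a whole, and $\mu_*L|_m$ is a subspace), so the sign must be absorbed in the convention for the Lie--Poisson structure and for $\xi_M$, as in your second option.
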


Proper and free hamiltonian actions may fail to be reducible. 

\begin{ex}
The action in Example \ref{ex:hamiltonian:singular} is hamiltonian with moment map $\mu:\Rr^2\to \Rr$ given by $(x,y)\mapsto x^2/2$, but is not reducible.
\end{ex}

The problem with the previous example is that 0 is not a regular value of the moment map. For a general Hamiltonian action we need a slightly stronger condition:

\begin{defn}
Given a Hamitonian $G$-space $(M,L,G,\mu)$, we say that the \textbf{moment map is regular at $c\in\gg^*$} if $c$ is a regular value of the restriction of $\mu|_S$ to each presymplectic leaf. We say that the \textbf{moment map is regular} if it is regular at every $c\in\gg^*$.
\end{defn}

Note that for any Hamitonian $G$-space $(M,L,G,\mu)$, the moment map condition \eqref{eq:moment:map} gives:
\begin{equation}
\label{moment:regular} 
 \d_m\mu_\xi(X)=i_{\xi_M}\al, \quad \forall \xi\in\gg,
\end{equation}
for any $(X,\al)\in L|_m$. This condition can be restated by saying that we have a commutative diagram:
\[\SelectTips{cm}{}
\xymatrix{ L\ar[r]^j \ar[d]_{\sharp}& \gg^* \\
TM\ar[ru]_{\d \mu}&}
\]
where $\sharp$ is projection on $TM$, i.e., the anchor of the Lie algebroid $L$. It follows that:

\begin{lem}
Given a Hamitonian $G$-space $(M,L,G,\mu)$, the moment map is regular if and only if the Dirac action is regular. Moreover, in this case the action is locally free.
\end{lem}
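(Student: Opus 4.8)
The plan is to unwind both sides of the claimed equivalence through the commutative triangle relating $j$, $\sharp$ and $\d\mu$ recorded in \eqref{moment:regular}, reducing everything to a pointwise linear-algebra statement on each presymplectic leaf. Fix $m\in M$ and let $(S,\omega_S)$ be the presymplectic leaf through $m$; recall that $T_mS=\im\sharp_m=\{X\in T_mM:(X,\al)\in L|_m\text{ for some }\al\}$. The key observation is that the composite $L|_m\xrightarrow{\sharp} T_mS\xrightarrow{\d_m\mu} \gg^*$ equals $j|_m$ on $L|_m$; hence $j|_m$ is surjective if and only if $\d_m(\mu|_S):T_mS\to\gg^*$ is surjective, since $\sharp$ surjects onto $T_mS$ by definition. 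Doing this at every $m$ in every leaf gives: $j$ is fiberwise surjective $\iff$ $c$ is a regular value of $\mu|_S$ for every leaf $S$ and every $c$ in the image, which is exactly the regularity of the moment map (regularity at values not in the image being vacuous). So the first half of the lemma is really just the definition chase through the triangle, together with the elementary fact that postcomposition by a map with a fixed image $T_mS$ preserves the notion of surjectivity onto $\gg^*$.

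For the \emph{moreover} clause, I would argue that fiberwise surjectivity of $j$ forces the action to be locally free. Suppose $\xi\in\gg$ satisfies $\xi_M(m)=0$; I want $\xi=0$. The moment map condition \eqref{eq:moment:map} says $(\xi_M,\d\mu_\xi)\in\Gamma(L)$, so at the point $m$ we have $(0,\d_m\mu_\xi)\in L|_m$. Since $L|_m$ is maximal isotropic with respect to $\langle~,~\rangle_+$, pairing $(0,\d_m\mu_\xi)$ with an arbitrary $(Y,\be)\in L|_m$ gives $\tfrac12 i_Y(\d_m\mu_\xi)=0$, i.e. $\d_m\mu_\xi$ annihilates $\im\sharp_m=T_mS$. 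But $\d_m\mu_\xi=\d_m(\mu_\xi|_S)$ restricted to $T_mS$, which is the $\xi$-component of $\d_m(\mu|_S)$; since $\d_m(\mu|_S):T_mS\to\gg^*$ is surjective (by the first part, using that the moment map is regular), the only covector in $\gg^{**}=\gg$ that kills its image is $0$, whence $\xi=0$. Thus the isotropy algebra at every point is trivial and the action is locally free.

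The step I expect to require the most care is not conceptually hard but bookkeeping-heavy: one must be careful that ``$c$ is a regular value of $\mu|_S$'' is being used in the strong sense (full differential onto $\gg^*$ at every point of the fiber), and match conventions so that the triangle \eqref{moment:regular} genuinely encodes $j=\d\mu\circ\sharp$ as maps $L|_m\to\gg^*$ rather than up to a sign or a factor; with the pairing normalization \eqref{eq:pairings} and the definition of $j$ this is immediate, but it is the place where a sloppy sign would derail the argument. I would therefore write out \eqref{moment:regular} explicitly once, note $j(X,\al)(\xi)=i_{\xi_M}\al = i_{\xi_M}(i_X\omega_S) = \d_m\mu_\xi(X)$ on a leaf using \eqref{eq:pre:symp:form} and the leafwise moment condition $i_{\xi_M}\omega_S=\d\mu_\xi|_S$, and then let the surjectivity equivalence and the local-freeness conclusion follow formally.
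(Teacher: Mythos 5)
Your argument is correct and is essentially the paper's own: the lemma is stated there as an immediate consequence of the commutative triangle \eqref{moment:regular} (i.e.\ $j=\pm\,\d\mu\circ\sharp$ on $L$), and you simply unwind this leafwise using $\im\sharp_m=T_mS$, together with the standard isotropy argument showing that surjectivity of $\d_m(\mu|_S)$ forces trivial isotropy algebras, hence local freeness. One cosmetic remark: with the conventions \eqref{eq:pairings} and \eqref{eq:pre:symp:form} one actually gets $i_{\xi_M}\al=\omega_S(X,\xi_M)=-\d_m\mu_\xi(X)$, so \eqref{moment:regular} (and your final chain of equalities) holds only up to a sign, which is immaterial here since the surjectivity of $j|_m$ versus that of $\d_m(\mu|_S)$ is insensitive to it.
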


\begin{ex}
\label{ex:regular:assumpt}
If $L_\omega$ is the Dirac structure associated with a closed 2-form, then there is only one leaf, and so a moment map $\mu:M\to\gg^*$ is regular if and only if $\mu:M\to\gg^*$ is a submersion. 

On the other hand, if $L_\pi$ is the Dirac structure associated with a Poisson structure, a moment map $\mu:M\to\gg^*$ is regular if and only if the action is locally free.
\end{ex}

Proposition \ref{prop:Dirac:reducible} shows that for any Hamitonian $G$-space $(M,L,G,\mu)$, for which the action is proper and free and the moment map is regular, the Dirac structure reduces to $M/G$. The following result shows that we also have induced Dirac structures on the Hamiltonian quotients $\mu^{-1}(c)/G_c$. 

\begin{thm}[Hamiltonian Reduction of Dirac Structures]
\label{thm:Dirac:reduction}
Let $G$ be a hamiltonian action of a Lie group $G$ on a Dirac manifold $(M,L)$ with moment map $\mu:M\to\gg^*$. 
\begin{enumerate}[(i)]
\item Assume that the action is proper and free on $\mu^{-1}(c)$ and that the moment map is regular at $c$. Then there are Dirac structures on $\mu^{-1}(c)$ and $\mu^{-1}(c)/G_c$, where $G_c$ denotes isotropy group of $c\in\gg^*$, such that in the following diagram:
\[\SelectTips{cm}{}\xymatrix@R=10pt@C=10pt{
                                      & M\ar@{-->}[dr]^{\pi}& \\
\mu^{-1}(c)\ar[ur]^i \ar[dr]_{\pi_c} &     &M/G \\
& \mu^{-1}(c)/G_c \ar@{-->}[ur]_{i} }\]
the maps on the left are b-Dirac. 
\item If the action is proper and free on $M$ and the moment map is regular, then there is a Dirac structure on $(M/G,L_{M/G})$ such that in the diagram above the maps on the right are f-Dirac.
\end{enumerate}
\end{thm}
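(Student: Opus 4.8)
The plan is to establish Theorem \ref{thm:Dirac:reduction} by treating each arrow separately, using Proposition \ref{prop:Dirac:reducible} for the ``quotient'' direction and a direct argument for the ``restriction'' direction, and then verifying compatibility.

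\medskip

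\noindent\textbf{Part (ii): the maps on the right.} For $\pi:M\to M/G$ this is immediate from Proposition \ref{prop:Dirac:reducible}, since the Lemma preceding the theorem tells us that regularity of the moment map is equivalent to regularity of the Dirac action, and properness and freeness are assumed. So $L_{M/G}:=\pi_*L$ is a genuine Dirac structure and $\pi$ is f-Dirac by construction. For the map $i:\mu^{-1}(c)/G_c\to M/G$ (when $c=0$, so that $G_c=G$), I would first show $\pi_c:\mu^{-1}(0)\to \mu^{-1}(0)/G$ exists and then factor. The cleanest route is: the composite $\pi\circ i_0:\mu^{-1}(0)\to M/G$ is $G$-invariant, hence descends to a smooth map $i$; one checks $i$ is an f-Dirac map by chasing the definition of $\pi_*L$ through the composition and using that $i_0$ is b-Dirac (established in Part (i)). The key point is the set-theoretic identity $(\pi\circ i_0)_* L = i_*\bigl((\pi_0)_* (i_0^* L)\bigr) = i_* L_{\mu^{-1}(0)/G}$, i.e.\ forward images compose, combined with $(\pi\circ i_0)_* L = \pi_*\bigl((i_0)_* \cdots\bigr)$; since $\pi$ already restricts correctly this pins down $i$ as f-Dirac.

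\medskip

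\noindent\textbf{Part (i): the maps on the left.} For $i_c:\mu^{-1}(c)\hookrightarrow M$, the candidate is $L_{\mu^{-1}(c)}:=i_c^* L$; since pullback of a Dirac structure under any map is automatically a Dirac structure \emph{provided} it is a smooth subbundle of $\T\mu^{-1}(c)$, the whole content is the smoothness/constant-rank statement. Here I would argue pointwise: for $m\in\mu^{-1}(c)$, $(i_c^*L)|_m = \{(X,\al|_{T_m\mu^{-1}(c)}) : X\in T_m\mu^{-1}(c),\ (X,\al)\in L|_m\}$, and the rank is controlled by $\dim(T_m\mu^{-1}(c)\cap \sharp(L|_m))$, i.e.\ by the dimension of $S_m\cap\mu^{-1}(c)$ where $S_m$ is the presymplectic leaf through $m$. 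Regularity of $\mu$ at $c$ on \emph{each} leaf forces $\mu|_{S}$ to be a submersion onto a neighborhood of $c$, so this intersection has locally constant codimension $\dim\gg$ inside $S_m$; combined with the standard formula $\rank(i^*L) = \dim M' - \dim(\text{leaf}) + \dim(\text{leaf}\cap M')$ (valid whenever the leaf meets $M'$ cleanly), this gives constant rank and hence smoothness. For $\pi_c:\mu^{-1}(c)\to\mu^{-1}(c)/G_c$ one then applies Proposition \ref{prop:Dirac:reducible}, or rather its proof, to the restricted action of $G_c$ on $(\mu^{-1}(c), i_c^*L)$: one must check this restricted action is again a regular Dirac action, which follows because on $\mu^{-1}(c)$ the $\gg_c$-infinitesimal generators are tangent, the relevant $j$-map surjects onto $\gg_c^*$ (a quotient of the surjection onto $\gg^*$ cut down by the moment condition), and properness/freeness are inherited. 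Backward Dirac maps compose, so $\pi_c^*L_{\mu^{-1}(c)/G_c} = \pi_c^*\pi_{c*}(i_c^*L) = i_c^* L$, giving the claimed b-Dirac identities.

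\medskip

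\noindent\textbf{Expected main obstacle.} The delicate step is the constant-rank argument for $i_c^*L$ being a smooth subbundle --- this is exactly where the ``regular on every leaf'' hypothesis (as opposed to merely ``$c$ a regular value of $\mu$'') is essential, as Example \ref{ex:hamiltonian:singular} shows. One has to control how the presymplectic leaf $S_m$ meets $\mu^{-1}(c)$ uniformly in $m$; the leafwise moment map condition $i_{\xi_M}\omega_S = \d\mu_\xi|_S$ shows $\mu|_S$ is a submersion near $m$ precisely when the $\xi_M$ are leafwise-symplectically independent there, which is the leafwise regularity, and then a clean-intersection/transversality argument gives the local normal form needed for constant rank. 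The rest --- functoriality of forward and backward images under composition, inheritance of properness, freeness and regularity by the restricted $G_c$-action, and the compatibility square --- is bookkeeping that I would carry out but not belabor.
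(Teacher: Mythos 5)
There is a genuine gap in your Part (i), at the step where you reduce $(\mu^{-1}(c),i_c^*L)$ by $G_c$ ``by applying Proposition \ref{prop:Dirac:reducible}, or rather its proof, to the restricted action''. The restricted action is \emph{never} a regular Dirac action (unless $\gg_c=0$): if $(X,\al)\in i_c^*L|_m$ with extension $\be\in T_m^*M$, then for any $\xi\in\gg_c$ isotropy of $L$ against the section $(\xi_M,\d\mu_\xi)$ gives $i_{\xi_M}\al=\be(\xi_M)=-\d_m\mu_\xi(X)=0$, because $X\in T_m\mu^{-1}(c)=\Ker\d_m\mu$. So the $j$-map of the restricted action is identically zero on $\mu^{-1}(c)$ --- exactly the opposite of the surjectivity onto $\gg_c^*$ that you claim --- and the regularity criterion gives you nothing there. (The same computation shows you cannot obtain $\pi_c^*L_{\mu^{-1}(c)/G_c}=i_c^*L$ as $\pi_c^*\pi_{c*}(i_c^*L)$ via that proposition.) The conclusion you want is true, but it needs a different argument: either note that $(\xi_M,0)\in i_c^*L$ for all $\xi\in\gg_c$, so that $i_c^*L$ is basic for the submersion $\pi_c$ and descends by a separate (non-regularity) argument, or follow the route the paper takes, which is designed precisely to avoid reducing the level set directly.

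The paper's proof proceeds in the opposite order: first build $L_{M/G}=\pi_*L$ on $M/G$ from Proposition \ref{prop:Dirac:reducible}; then observe, combining \eqref{red:Dirac} with \eqref{moment:regular}, that any $(\d\pi\cdot X,\al)\in\pi_*L$ has $\d\mu(X)=0$, so $\mu^{-1}(c)/G_c$ is a \emph{union of presymplectic leaves} of $(M/G,L_{M/G})$; hence it carries an induced Dirac structure with $i$ f-Dirac for free, with no rank or smoothness computation on the level set; finally the structure on $\mu^{-1}(c)$ is \emph{defined} as the pullback $\pi_c^*L_{M\red G}$ along the submersion $\pi_c$ (always smooth), and a diagram chase shows $i_c$ is b-Dirac. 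Note also that your direct smoothness argument for $i_c^*L$, while based on the right mechanism (leafwise regularity controlling how leaves meet $\mu^{-1}(c)$), quotes an incorrect rank formula: the pullback of a maximal isotropic subbundle is pointwise maximal isotropic, of rank $\dim\mu^{-1}(c)$ at every point; the issue is smooth variation, which is governed by the local constancy of $L\cap\bigl(\{0\}\oplus (T\mu^{-1}(c))^0\bigr)=\set{(0,\d\mu_\xi):\d\mu_\xi|_{TS}=0}$, and this space vanishes identically precisely because of leafwise regularity. That part of your plan is therefore fixable (and, in the paper's route, unnecessary, since $i_c^*L$ is recovered at the end as $\pi_c^*L_{M\red G}$); the reduction of the level set by $G_c$ via regularity is the step that genuinely fails.
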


\begin{proof}
We give the proof in the case where the action is is proper and free on $M$ and the moment map is regular. As we pointed out above, by Proposition \ref{prop:Dirac:reducible} we have an induced Dirac structure on $M/G$ such that $L_{M/G}=\pi_*L$.
The description of $ \pi_*L$ given by \eqref{red:Dirac} together with the property \eqref{moment:regular}, show that if $(\d\pi\cdot X,\al)\in \pi_*L$, then $\d\mu(X)=0$. This means that $\mu^{-1}(c)/G_c$ is a union of presymplectic leaves of $(M/G,L_{M/G})$ and it follows that there is a unique Dirac structure on $\mu^{-1}(c)/G_c$ such that the inclusion $i:\mu^{-1}(c)/G_c\hookrightarrow M/G$ is a f-Dirac map.  Finally, we can pullback the Dirac structure on $\mu^{-1}(c)/G_c$ under the submersion $\pi_c:\mu^{-1}(c)\to \mu^{-1}(c)/G_c$, to obtain a Dirac structure on $\mu^{-1}(c)$ such that this projection is b-Dirac. A simple diagram chasing with these Dirac structures shows that the inclusion $i:\mu^{-1}(c)\hookrightarrow M$ is a b-Dirac map.
\end{proof}

The previous result yields as special cases the hamiltonian reduction of presymplectic structures and of Poisson structures under the usual regularity assumptions (see Example \ref{ex:regular:assumpt}). We can also combine these two reductions as follows:

\begin{ex}
Let $(P,\pi)$ be a Poisson structure and $(S,\omega)$ be a presymplectic manifold. Assume that $G$ acts on $(P,\pi)$ and on $(S,\omega)$ in a hamiltonian fashion with moment maps $\mu_P:P\to\gg^*$ and $\mu_S:S\to\gg^*$. If at least one of these actions is proper and free and at least one of the moment maps is regular, we see that the diagonal action of $G$ on $(M:=P\times S,L:= L_\pi\oplus L_\omega)$ is proper and free with regular moment map $\mu:=\mu_P+\mu_S$. It follows that $\mu^{-1}(0)/G$ has a Dirac structure $L_0$, which according to Theorem \ref{thm:Dirac:reduction} is characterized by:
\[ i^*L=\pi^*L_0, \]
where $i:\mu^{-1}(0)\hookrightarrow P\times S$ is the inclusion and $\pi:\mu^{-1}(0)\to \mu^{-1}(0)/G$ is the projection.
\end{ex}


\section{Integrability of Dirac Quotients}  
\label{sec:Dirac:quotients}

In this section we consider a Dirac action of a Lie group $G$ on a Dirac manifold $(M,L)$. We will assume that the action is regular (see Definition \ref{defn:regular}), proper and free, so that we have a quotient Dirac structure $(M/G,L_{M/G})$ such that $\pi_*L=L_{M/G}$. The global objects integrating Dirac structures are presymplectic groupoids, as explained in \cite{BCWZ}.  We are interested in understanding the relationship between the presymplectic groupoids integrating $(M,L)$ and the presymplectic groupoids integrating $(M/G,L_{M/G})$.

\subsection{Groupoids}                                                                                                                             

In the sequel, we will denote by $\G\tto M$ a Lie groupoid, with source
and target maps $\s,\t:\G\to M$, identity section $\iota:M\to \G$, $m\mapsto 1_m$,
and inversion $i:\G\to\G$, $x\mapsto x^{-1}$. Composition of two arrows, denoted 
$x\cdot y$, is defined provided $\s(x)=\t(y)$.

Also, we will denote by $p_A:A\mapsto M$ a Lie algebroid with Lie bracket $[~,~]_A$ 
and anchor $\sharp:A\to TM$. Given a Lie groupoid $\G$, its Lie algebroid has bundle
$A(\G):=\Ker\d_{\iota(M)}\s$ and anchor $\sharp:=\d_{\iota(M)}\t$, so that the sections of 
$A(\G)$ can be identified with the right invariant vector fields on $\G$ (and this 
defines the Lie bracket on sections of $A(\G)$).

Let $A\to M$ be a Lie algebroid. We recall the construction of the Weinstein
groupoid $\G(A)$ (see \cite{CrFe2,CrFe3}). This is a topological groupoid, with source 1-connected fibers,
which morally integrates $A$. Indeed, $A$ is integrable iff $\G(A)$ is smooth and in 
this case $A(\G(A))$ is canonically isomorphic to $A$. As a space, $\G(A)$ is the quotient:
\[ \G(A)=P(A)/\sim, \]
where:
\begin{itemize}
\item $P(A)$ denotes the space of \textbf{$A$-paths}, i.e., the paths $a:I\to A$ such that $\sharp(a(t))=\frac{\d }{\d t}p_A(a(t))$. 
\item $\sim$ denotes the equivalence relation on $P(A)$ given by \textbf{$A$-homotopy}.
\end{itemize}
We denote by $[a]_A$ or $[a]$ the elements of $\G(A)$. We recall that $\G(A)$ becomes a \emph{topological groupoid} with the quotient $C^2$-topology, where one sets 
$\s([a])=p_A\circ a(0)$ and $\t([a])=p_A\circ a(1)$ with multiplication given by concatenation. 

Let us recall how $A$-homotopies are defined since this will be essential later. Suppose we are given $\alpha^\epsilon$(t), a time dependent family of sections of $A$ depending on a parameter $\epsilon\in I:=[0,1]$, and $\beta^0(\epsilon)$ a time dependent section of $A$.  
Then there exists a unique solution $\beta=\beta^t(\epsilon)$ of the following 
\emph{evolution equation}:
\begin{equation}
\label{evolution}
\frac{d\alpha}{d\epsilon}-\frac{d\beta}{dt}=[\alpha,\beta],
\end{equation}
with initial condition $\beta^0(\epsilon)$. In fact, it is easily checked that the following integral formula provides a solution:
\begin{equation}
\label{hintegral}\beta^t(\epsilon):=\int_0^t(\psi^{\alpha^\epsilon}_{t,s})_*(\frac{d}{d\epsilon}\alpha^\epsilon(s))ds+(\psi_{t,0}^{\alpha^\epsilon})_*(\beta^0(\epsilon)).
\end{equation}
Here $\psi^{\alpha^\epsilon}$ denotes the flow of the time dependent linear vector field on $A$, associated to the derivation $[\alpha^\epsilon,-]$ of sections of $A$ (see the appendix in \cite{CrFe2} for more details). We emphasize the use of the indices and parameters in the notation: we think of $\alpha$ as an $\epsilon$-family of $t$-time dependent sections of $A$, while we think of $\beta$ as a $t$-family of $\epsilon$-time dependent sections of $A$ (see why below).

\begin{defn}
A family $a^\epsilon:I\mapsto A,\ \epsilon\in I$ of $A$-paths, over $\gamma^\epsilon:I\mapsto B$ is called an \textbf{$A$-homotopy} if and only the unique solution $\beta$ of (\ref{evolution}) with initial condition $\beta^0(\epsilon)=0$ satisfies: 
\begin{equation}\label{hcondition}
\beta^1(\epsilon)_{\gamma^\epsilon(1)}=0, \forall \epsilon\in I.
\end{equation}
Here, $\alpha^\epsilon$ denotes any family of time-dependant sections of $A$ \emph{extending} $a$, that is, such that $\alpha^\epsilon_{\gamma^\epsilon(t)}(t)=a^\epsilon(t)$. 
\end{defn}

One checks that this definition is independent of the choice of $\alpha$ (see \cite{CrFe2}). Moreover, the homotopy is completely determined by $b^t(\epsilon)=\beta^t(\epsilon)_{\gamma^\epsilon(t)}(t)$, so we will denote an $A$-homotopy by $a(\epsilon,t)\d t + b(\epsilon,t)\d \epsilon$, and we will refer to (\ref{hcondition}) as the {\bf homotopy condition}. Note that, if some $\alpha^0$ is fixed, and we are given $\beta$ with $\beta^0=\beta^1=0$, then equation (\ref{evolution}) can also be considered as an evolution equation for $\alpha$, which turns out to induce an homotopy.

\comment{
The reason why we consider the evolution equation (\ref{evolution}) with non-vanishing initial condition is that this also leads to $A$-homotopies, as we now explain. Set $X=\sharp\alpha$ and $Y=\sharp\beta$, so that $X^\epsilon$ and $Y^t$ are families of time dependent vector fields on $M$ (which obviously satisfy an evolution equation in the algebroid $TM$). As the the proof of the next proposition shows, this forces their time dependent flows $\phi^{X^\epsilon}_{t,0}, \phi^{Y^t}_{\epsilon,0}$ to be related as follows:
\begin{equation}\label{hflows}
\phi^{X^\epsilon}_{t,0}\circ\phi^{Y^0}_{\epsilon,0}=\phi^{Y^t}_{\epsilon,0}\circ\phi^{X^0}_{t,0}. 
\end{equation}
In particular, if we denote by $\gamma^\epsilon(t)$ any of these two equivalent expressions applied to some $m_0\in M$, we obtain two families of $A$-paths $a^\epsilon$ and $b^t$, defined by $a^\epsilon(t):=\alpha^\epsilon(t)_{\gamma^\epsilon(t)}$ and $b^t:\epsilon\mapsto \beta^t(\epsilon)_{\gamma^\epsilon(t)}$. We have:

\begin{prop}
\label{hgeom}
For any couple $\alpha$ and $\beta$ satisfying the evolution equation (\ref{evolution}), the concatenations $a^1.b^0$ and $b^1.a^0$ defined above are homotopic $A$-paths.
\end{prop}

\begin{proof}
Assume that $A$ is integrable and denote by $\G(A)$ the source 1-connected Lie groupoid integrating $A$. Recalling that the Lie algebra of sections of $A$ can be identified
with the Lie algebra of right invariant vector fields on $\G$, we denote by $\ri{\alpha}^\epsilon$ and $\ri{\beta}^t_R$ the (time dependent) right invariant vector fields on $\G(A)$ that correspond
to $\alpha^\epsilon$ and $\beta^t$, respectively. Then the evolution equation (\ref{evolution})
exactly means that $\ri{\alpha}^\epsilon+\partial t$ and $\ri{\beta}^t+\partial \epsilon$ commute,
when seen as vector fields on $\G(A)\times I\times I$. This means that their flows:
$$
\begin{array}{cccc}
 \psi_u^\alpha(g,t,\epsilon)&=&(\phi^{\ri{\alpha}^\epsilon}_{t+u,t}(g),t+u,\epsilon)\\
 \psi_v^\beta(g,t,\epsilon)&=&(\phi^{\ri{\beta}^t}_{\epsilon+v,\epsilon}(g),t,\epsilon+v),
\end{array}
$$
commute, so we find:
$$
\phi^{\ri{\alpha}^{\epsilon+v}}_{t+u,t}(g)\circ\phi^{\ri{\beta}^t}_{\epsilon+v,\epsilon}=\phi^{\ri{\beta}^{t+u}}_{\epsilon+v,\epsilon}\circ \phi^{\ri{\alpha}^{\epsilon}}_{t+u,t}(g).
$$
In particular, taking $t=\epsilon=0$ and then switching the roles of $u,v$ with the ones of $t,\epsilon$, one gets: 
$$
\phi^{\ri{\alpha}^{\epsilon}}_{t,0}\circ\phi^{\ri{\beta}^0}_{\epsilon,0}(1_{m_0})=
\phi^{\ri{\beta}^{t}}_{\epsilon,0}\circ \phi^{\ri{\alpha}^{0}_R}_{t,0}(1_{m_0}).
$$
Now, observe that the path in $\G$ corresponding to the concatenation $a^1.b^0$ is 
the concatenation of the following paths 
\[ 
\epsilon\mapsto  \phi^{\ri{\beta}^0}_{\epsilon,0}({1}_{m_0}),\quad t\mapsto\phi^{\ri{\alpha}^{1}}_{t,0}\circ\phi^{\ri{\beta}^0}_{1,0}({1}_{m_0}),
\]
while the one corresponding to $b^1.a^0$ is the concatenation of the paths 
\[
t\mapsto\phi^{\ri{\alpha}^{0}}_{t,0}({1}_{m_0}),\quad \epsilon\mapsto\phi^{\ri{\beta}^{1}}_{\epsilon,0}\circ \phi^{\ri{\alpha}^{0}}_{1,0}({1}_{m_0}).
\]
These are clearly homotopic (in the $\s$-fibers) since they define the boundary of the square:
\[
(t,\epsilon)\mapsto \phi^{\ri{\alpha}^{\epsilon}}_{t,0}\circ\phi^{\ri{\beta}^0}_{\epsilon,0}({1}_{m_0})=\phi^{\ri{\beta}^{t}}_{\epsilon,0}\circ \phi^{\ri{\alpha}^{0}}_{t,0}({1}_{m_0}).
\]
We conclude that $a^1.b^0$ and $b^1.a^0$ are homotopic as $A$-paths.

In the case $A$ is not integrable, the above proposition still holds, and the proof is just a technical matter: one finds an expression for an $A$-homotopy beetween the concatenations $a^1.b^0$ and $b^a.a^0$, in terms of a \emph{usual} homotopy of paths (i.e., with fixed end points, and lying in the $s$-fibers) beetween the two concatenations of paths described above. The evolution equation (\ref{evolution}) is the infinitesimal counterpart of such an homotopy.
\end{proof}

\begin{rem}
We can also interpret the proposition in terms of $A$-homotopies (when $b^0=0$): 
it says that $b^1$ is a representative (up to $A$-homotopy) of $a^1.(a^0)^{-1}$. 
Hence, $b^1$ is trivial if and only if $a^0$ is homotopic to $a^1$.
\end{rem}
}

\subsection{Presymplectic groupoids}
\label{sub:symp:grpds}
As a general philosophic principle, when a Lie algebroid is associated with some geometric structure one has some extra geometric structure on the associated Lie groupoid. In the case of the Lie algebroid associated with a Dirac structure, this turns out to be a (multiplicative) presymplectic form, as we briefly recall here. For more details see
\cite{BCWZ} and \cite{PoWa}.

\begin{defn} A  $2$-form $\Omega$ on the space of arrows of a Lie groupoid $\G$ is said to be \textbf{multiplicative} if 
\begin{equation}
\label{eq:multiplicative}
m^*\Omega=\text{pr}_1^*\Omega+\text{pr}_2^*\Omega,
\end{equation}
where $m:\G^{(2)}\to\G$ denotes multiplication of composable arrows and $\text{pr}_i:\G^{(2)}\to\G$ the projection on both factors. A \textbf{presymplectic groupoid} is a Lie groupoid endowed with a multiplicative $2$-form such that:
\begin{equation}
\label{eq:non:degeneracy} 
 \Ker \Omega_x\cap \Ker(\d\s)_x\cap \Ker(\d\t)_x=\{0\},
\end{equation}
for any unit $x\in M$.
\end{defn}

The non-degeneracy condition \eqref{eq:non:degeneracy}  implies that $\dim\G=2\dim M$. Also, this non-degeneracy condition is obviously satisfied when $\Omega$ is symplectic. In this case, the multiplicative condition \eqref{eq:multiplicative} is equivalent to requiring the $\graph(m)\subset \G\times\G\times\overline{\G}$ to be a Lagrangian submanifold, and we say that $(\G,\Omega)$ is a \textbf{symplectic groupoid}. 

Henceforth we will say $L$ is an \textbf{integrable Dirac structure} on $M$ if the Lie algebroid $(L,\llbracket~,~\rrbracket,\sharp)$ is integrable by some Lie groupoid. One of the main results of \cite{BCWZ} establishes a one to one correspondence between $\s$-simply connected presymplectic groupoids and integrable Dirac structures, and can be stated as follows:

\begin{thm}
\label{thm:presymplectic:groupoid}
Let $(\G\tto M,\Omega)$ be a presymplectic groupoid. Then:
\begin{enumerate}[(i)]
\item There is a unique Dirac structure $L$ on $M$ such that $A(\G)$ is isomorphic to $L$ and the target map $\t:(\G,\Omega)\to (M,L)$ is a f-Dirac map. 
\item Conversely, given a Dirac manifold $(M,L)$ such that $L$ is an integrable Lie algebroid, the Weinstein groupoid $\G(L)$  has a naturally induced multiplicative presymplectic form, for which the target map is a f-Dirac map. 
\item If $(\G,\Omega)$ is source simply connected then it is isomorphic, as a presymplectic groupoid, to the Weinstein groupoid $\G(L)$ of the underlying Dirac structure.
\end{enumerate}
\end{thm}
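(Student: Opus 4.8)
The plan is to move back and forth between the multiplicative presymplectic form on the groupoid and its infinitesimal shadow on $M$, which will turn out to be exactly the Dirac structure. For (i), I would start from the bundle map
\[
\sigma\colon A(\G)\to\TM,\qquad a\longmapsto \bigl(\d\t(a),\ i_a\Omega|_{TM}\bigr),
\]
where $a\in A(\G)=\Ker\d\s|_M$ and $i_a\Omega|_{TM}$ denotes the restriction to $T_m M\subset T_{1_m}\G$ of the contraction of $\Omega$ with $a$. The non-degeneracy condition \eqref{eq:non:degeneracy} says precisely that $\sigma$ is injective, so $L:=\im\sigma$ is a subbundle of $\TM$ with $\rank L=\dim M$. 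Multiplicativity \eqref{eq:multiplicative} of $\Omega$ translates into the relation that forces $L$ to be isotropic for $\langle~,~\rangle_+$ of \eqref{eq:pairings}; together with the rank count this makes $L$ maximal isotropic, i.e.\ an almost Dirac structure. Closedness of $\Omega$ then gives, via the same multiplicativity identities, that $\Gamma(L)$ is closed under the Courant bracket \eqref{eq:Courant:bracket}, so $L$ is Dirac. By construction $\sigma$ intertwines anchors and brackets, so $A(\G)\cong L$ as Lie algebroids, and unwinding the definition of $\sigma$ shows that $\t\colon(\G,\Omega)\to(M,L)$ is an f-Dirac map. Uniqueness of $L$ is immediate since $\sigma$ is canonically attached to $\Omega$.

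For (ii), given an integrable Dirac manifold $(M,L)$, I would equip the Weinstein groupoid $\G(L)=P(L)/\sim$ with a $2$-form by a path-space formula. A tangent vector at $[a]\in\G(L)$ is represented by an $L$-homotopy $a(\epsilon,t)\,\d t+b(\epsilon,t)\,\d\epsilon$, and one defines $\Omega_{[a]}$ on two such variations by integrating along the $L$-path $a$ the pairing $\langle~,~\rangle_-$ inherited from the ambient $\TM$, in the spirit of symplectic reduction of the canonical $2$-form on a path space. The content is then to check: (a) this form is basic for the infinite-dimensional $L$-homotopy foliation on $P(L)$, hence descends to $\G(L)$ — this uses the homotopy condition \eqref{hcondition} and the evolution equation \eqref{evolution}; (b) it is closed; (c) it satisfies \eqref{eq:multiplicative}, which reflects the fact that concatenation of $L$-paths adds the defining integrals; and (d) it satisfies the non-degeneracy \eqref{eq:non:degeneracy}, which at units reduces to the maximal isotropy of $L$ proved in (i). Equivalently, (ii) is the statement that the infinitesimally multiplicative $2$-form on $L$ given by $(X,\alpha)\mapsto\alpha$ integrates to a multiplicative $2$-form on the source-simply-connected integration; for the details of this integration I refer to \cite{BCWZ}.

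For (iii), assume $(\G,\Omega)$ is source simply connected and let $L$ be the Dirac structure produced in (i). By Lie's second theorem for Lie algebroids, $\G$ is canonically isomorphic as a Lie groupoid to $\G(A(\G))=\G(L)$. Under this isomorphism $\Omega$ and the form constructed in (ii) on $\G(L)$ have the same infinitesimal data — both restrict at units via $\sigma$ to the same $L$ — and a multiplicative $2$-form on a source-simply-connected groupoid is determined by its infinitesimal data; hence the two forms coincide and the isomorphism is one of presymplectic groupoids.

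The main obstacle is clearly step (ii): producing the multiplicative presymplectic form on $\G(L)$ and, above all, verifying that the path-space $2$-form is basic with respect to the $A$-homotopy equivalence so that it descends to $\G(L)$. This is the technical heart of \cite{BCWZ}, where the interplay among the evolution equation \eqref{evolution}, the homotopy condition \eqref{hcondition} and closedness of the form all come into play; the remaining verifications (isotropy, multiplicativity, non-degeneracy, Lie II) are comparatively formal.
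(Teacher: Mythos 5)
The paper itself does not prove this theorem: it is quoted directly from \cite{BCWZ}, so there is no internal argument to compare against. Your sketch follows exactly the route of that reference --- extracting $L$ as the image of $a\mapsto(\d\t(a),\,i_a\Omega|_{TM})$ on $A(\G)=\Ker\d\s|_M$, building the multiplicative form on $\G(L)$ by path-space reduction, and concluding with Lie II together with the rigidity of multiplicative $2$-forms with prescribed infinitesimal data --- and it is correct as an outline, with the two caveats that injectivity of your map $\sigma$ does not follow from \eqref{eq:non:degeneracy} alone but needs the multiplicativity identities at the units as well, and that the genuinely hard step (the descent of the path-space form along $A$-homotopies) is, as you acknowledge, deferred to \cite{BCWZ}, which is also all the paper does.
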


The multiplicative presymplectic form $\Omega$ on $\G(L)$ is related to sections of $L$ by the following formulas, that will be useful later on: for any sections $\eta=(v,\alpha),\xi=(w,\beta)\in\Gamma(L)$, and any $X\in T\G$ one has
\begin{align}
\label{eq:left}
\Omega(\le{\eta},X)&=-\alpha(\s_*X)\\
\label{eq:right}
\Omega(\ri{\xi},X)&=\beta(\t_*X),
\end{align}
where $\le{\eta}$ (respectively $\ri{\xi}$) denotes the left (respectively right) invariant vector field on $\G(L)$ associated to $\eta$ (respectively $\xi$). Also, the source and target fibers turn out to be presymplectically orthogonal: 
\[ \omega(\le{\eta},\ri{\xi})=0. \]

In general, it is a hard problem to describe explicitly the integration of a given Dirac structure. However, this is possible in some simple examples.

\begin{ex}
\label{ex:presymplectic:int}
For the Dirac structure associated with a 2-form $\omega\in\Omega^2(M)$, the pair groupoid $\G=M\times M\tto M$ is a presymplectic groupoid integrating $L_\omega$: one defines the multiplicative presymplectic form on $M\times M$ by:
\[ \Omega:=\t^*\omega-\s^*\omega, \]
where the source and target maps are the two projections on $M$. This groupoid is not source simply connected if $M$ is not simply connected: the Weinstein groupoid is the fundamental groupoid $\Pi_1(M)\tto M$, which also carries a presymplectic form defined in the same way.
\end{ex}

\begin{ex}
In the correspondence given by Theorem \ref{thm:presymplectic:groupoid}, Poisson structures correspond to symplectic groupoids. Note, however, that for a Poisson structure $\pi$, the associated Dirac structure $L_\pi$, in general, is not integrable. When it is integrable, it may be quite complicated to give an explicit integration. This is possible in simple cases: for example, a linear Poisson structure on the dual a Lie algebra $\gg^*$ is always integrable and for any Lie group $G$ with Lie algebra $\gg$, the cotangent bundle $T^*G\tto \gg^*$, with its canonical symplectic structure, is a symplectic groupoid integrating $\gg^*$.
\end{ex}

\begin{ex}
Let $D\subset TM$ be a distribution. The corresponding Dirac structure is always integrable: if $\F$ denotes the corresponding foliation and $\Pi_1(\F)$ is the fundamental groupoid of $\F$ then we have the linear holonomy action of $\Pi_1(\F)$ on the conormal bundle $\nu^*(\F)$. The action groupoid $\G=\Pi_1(\F)\ltimes\nu^*(\F)\tto M$ is a presymplectic groupoid integrating $L_D$ with multiplicative presymplectic form given by:
\[ \Omega:=p^*\omega_\text{can}, \]
where $p:\G\to\nu^*(\F)$ denotes the projection and $\omega_\text{can}$ denotes the pull-back of the canonical symplectic form under the inclusion $\nu^*(\F)\hookrightarrow T^*M$.
\end{ex}

\begin{ex}
If $(\G,\Omega)$ is presymplectic groupoid integrating the Dirac structure $(M,L)$ and $B\in\Omega^2(M)$ is a closed 2-form, then the $B$-transform $(M,e^BL)$ is also an integrable Dirac structure: we take the same Lie groupoid $\G$ with a new presymplectic form:
\[ \Omega_B:=\Omega+\t^*B-\s^*B. \]
Hence, $B$-transforms do not affect the integrability of Dirac structures.
\end{ex}

\subsection{Integrability of Dirac Quotients}                                        
\label{sub:Dirac:quotients}

We can now give a complete answer to the problem of integrating quotients of Dirac structures under regular Dirac actions. Namely, we will prove the following result, which also follows from the general set up described in \cite{BC}:

\begin{thm}
\label{thm:int:Dirac:quotients}
Let $(M,L)$ be an integrable Dirac manifold and assume that $G$ acts on $M$ by Dirac automorphisms. If the action is regular, proper and free, then $(M/G,L_{M/G})$ is an integrable Dirac structure.
\end{thm}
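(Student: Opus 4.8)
The plan is to prove integrability of $(M/G,L_{M/G})$ by lifting the whole situation to the Weinstein groupoid $\G(L)$ and performing a groupoid-level presymplectic (Marsden--Weinstein) reduction there: this will produce a presymplectic groupoid whose Lie algebroid is $L_{M/G}$, and then Theorem~\ref{thm:presymplectic:groupoid} forces $L_{M/G}$ to be integrable. First I would lift the action. Since $G$ acts on $(M,L)$ by Dirac automorphisms, it acts on the Lie algebroid $L$ by algebroid automorphisms, hence on the space of $L$-paths preserving $L$-homotopy, and therefore on $\G(L)=P(L)/\!\sim$ by groupoid automorphisms; this action is smooth because $L$ is integrable. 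Since the multiplicative presymplectic form $\Omega$ on $\G(L)$ is canonically determined by $(M,L)$ (Theorem~\ref{thm:presymplectic:groupoid}), every $g\in G$ preserves $\Omega$, so we obtain an action on $(\G(L),\Omega)$ by automorphisms of presymplectic groupoids. It is free (if $g$ fixes $[a]$ it fixes $\s([a])$, so $g=e$ by freeness on $M$) and, using that the action commutes with the equivariant map $(\s,\t)\colon\G(L)\to M\times M$ and that the diagonal $G$-action on $M\times M$ is proper, it is also proper.

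Next I would introduce the moment map. The morphism $j\colon L\to\gg^*$ into the abelian Lie algebra $\gg^*$ integrates uniquely, since $\G(L)$ is source-simply-connected, to a groupoid morphism $J\colon\G(L)\to(\gg^*,+)$ with $A(J)=j$ (explicitly $J([a])=\int_0^1 j(a(t))\,\d t$). One checks that $J$ is $G$-equivariant, for the coadjoint action on $\gg^*$, and is a moment map for the lifted action, namely
\[ \iota_{\xi_{\G(L)}}\Omega=\d J_\xi,\qquad\forall\,\xi\in\gg, \]
which is verified using the relations \eqref{eq:left}--\eqref{eq:right} between $\Omega$ and sections of $L$. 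The regularity hypothesis enters precisely here: since $j$ is fibrewise surjective, $\d_{1_m}J|_{L_m}=j_m$ is onto, and for arbitrary $x$ with $\t(x)=m'$ right translation gives $J\circ R_x=J+J(x)$, so $\d_xJ$ is onto on $T_x\s^{-1}(\s x)$; hence $J$ is a submersion, $0$ is a regular value, and $\G_0:=J^{-1}(0)$ is a closed embedded Lie subgroupoid of $\G(L)$ over $M$ (note $J(1_m)=0$), with Lie algebroid $A(\G_0)=\Ker j\subset L$.

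Then I would carry out the reduction. As $0\in\gg^*$ is coadjoint-fixed, $\G_0$ is $G$-invariant, the action restricts to a proper free action on it, and since $\s,\t$ are $G$-equivariant the quotient $\G_0/G$ is a Lie groupoid over $M/G$. By the moment map identity, for $\xi\in\gg$ and $v\in T\G_0=\Ker\d J$ one has $\Omega(\xi_{\G(L)},v)=\d J_\xi(v)=0$, so $\Omega|_{\G_0}$ is basic and descends to a $2$-form $\Omega_{\text{red}}$ on $\G_0/G$; since $\G_0\hookrightarrow\G(L)$ and $\G_0\to\G_0/G$ are groupoid morphisms, $\Omega_{\text{red}}$ is multiplicative. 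Now $A(\G_0/G)=A(\G_0)/G=\Ker(j)/G$, which by the proof of Proposition~\ref{prop:Dirac:reducible} is precisely $\pi_*L=L_{M/G}$ (one matches anchors and brackets, and checks that $\t\colon\G_0/G\to M/G$ is an f-Dirac map); moreover the IM-form of $\Omega$ is the projection $L\to T^*M$, so that of $\Omega_{\text{red}}$ is the projection $L_{M/G}=\pi_*L\to T^*(M/G)$, whose graph is $L_{M/G}$ itself, which is maximal isotropic — this is exactly the non-degeneracy condition \eqref{eq:non:degeneracy}. Hence $(\G_0/G,\Omega_{\text{red}})$ is a presymplectic groupoid, and by Theorem~\ref{thm:presymplectic:groupoid} the Dirac structure it integrates is $L_{M/G}$, so $L_{M/G}$ is integrable.

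The hard part is the claim in the second step that the lifted action is Hamiltonian on $(\G(L),\Omega)$ with moment map $J$, i.e.\ $\iota_{\xi_{\G(L)}}\Omega=\d J_\xi$: this requires a sufficiently explicit description of the lifted infinitesimal generator $\xi_{\G(L)}$ in terms of left/right invariant vector fields, combined with the formulas \eqref{eq:left}--\eqref{eq:right}. Once that identity is in hand, the descent of $\Omega$ to $\Omega_{\text{red}}$ is immediate, and the remaining points — that $0$ is a regular value of $J$, the Lie algebroid identification (which is essentially handed to us by Proposition~\ref{prop:Dirac:reducible}), and the non-degeneracy of $\Omega_{\text{red}}$ — are comparatively routine.
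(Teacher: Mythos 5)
Your proof is correct and follows essentially the same route as the paper: lift the $G$-action to the Weinstein groupoid $(\G(L),\Omega)$, show it is Hamiltonian with moment map $J$ integrating $j\colon L\to\gg^*$ (the paper's Lemma on multiplicative $1$-forms handles the identity you flag as the hard step), use regularity to see $0$ is a regular value, and identify the Hamiltonian quotient $J^{-1}(0)/G$ as a presymplectic groupoid with Lie algebroid $\Ker j/G\simeq L_{M/G}$. The only differences are presentational (your explicit formula for $J$, the IM-form argument for non-degeneracy), not in substance.
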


We will prove this result by giving an explicit integration of the quotient Dirac structure $(M/G,L_{M/G})$ and this construction is also itself interesting.

\begin{proof}
First we remark that we can assume that $G$ is a connected Lie group. 

Next, we observe that if $(\G(L),\Omega)$ is the Weinstein groupoid of $(M,L)$ then each element $g\in G$ acts on an element $[a]\in\G(L)$: since the action is by Dirac diffeomorphisms, we have an induced action of $G$ on the bundle $L\to M$, by Lie algebroid automorphisms, that covers the original $G$-action. Hence, if $a:I\to L$ is an $A$-path, then we have the $A$-path $g\cdot a:I\to A$:
\[ (g\cdot a)(t):=g\cdot a(t). \]
Moreover, this action takes $A$-homotopies to $A$-homotopies so that we can set $g\cdot[a]:=[g\cdot a]$. Clearly, this action is by Lie groupoid automorphisms, since the action on $A$-paths intertwines concatenation of $A$-paths. Since the original $G$-action is proper and free, so is the lifted $G$-action on $\G(L)$.

We claim that:
\begin{lem}
\label{lem:hamitlonian}
The $G$-action on $(\G(L),\Omega)$ is a Hamiltonian action with moment map $J:\G(L)\to \gg^*$ the Lie groupoid morphism integrating the Lie algebroid morphism $j:L\to\gg^*$:
\begin{equation}
\label{eq:ham:action}
i_{\xi_\G}\Omega=\d J_\xi, \forall \xi\in\gg. 
\end{equation}
\end{lem}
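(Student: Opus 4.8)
The plan is to verify the Hamiltonian condition \eqref{eq:ham:action} directly using the explicit formulas \eqref{eq:left}--\eqref{eq:right} relating the multiplicative presymplectic form $\Omega$ on $\G(L)$ to sections of $L$, together with the fact that the moment map $J$ is, by construction, the unique groupoid morphism integrating $j:L\to\gg^*$. Concretely, for $\xi\in\gg$ the infinitesimal generator $\xi_\G$ of the lifted $G$-action on $\G(L)$ must be identified in terms of right- and left-invariant vector fields. First I would observe that since the $G$-action on $\G(L)$ is by groupoid automorphisms lifting the $G$-action on $M$, and since the moment-map pair $(\xi_M,\d\mu_\xi)=:\eta_\xi\in\Gamma(L)$ generates the $M$-level action, the generator $\xi_\G$ should be expressible as $\xi_\G=\ri{\eta_\xi}-\le{\eta_\xi}$; intuitively the right-invariant piece moves the target endpoint of an $A$-path by $\xi_M$ while the left-invariant piece moves the source endpoint, and these combine to the derivative of $[a]\mapsto[g\cdot a]$. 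This identification is the step I expect to require the most care: I would justify it either by differentiating the explicit action on $A$-paths $g\cdot a(t)=g\cdot a(t)$ using the integral/evolution description of $A$-homotopies from Section 3, or by the following cleaner argument — the flow of $\ri{\eta_\xi}-\le{\eta_\xi}$ is a one-parameter group of multiplicative automorphisms of $(\G(L),\Omega)$ integrating the derivation $\Lie_{\eta_\xi}$ on $\Gamma(L)$, which by Proposition stated earlier coincides with the lifted Dirac action of $\exp(t\xi)$; uniqueness of such an integration (source-simple-connectedness of $\G(L)$) then forces $\xi_\G=\ri{\eta_\xi}-\le{\eta_\xi}$.

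Granting this, the computation of $i_{\xi_\G}\Omega$ is immediate from \eqref{eq:left}--\eqref{eq:right}: for any $X\in T\G(L)$, writing $\eta_\xi=(\xi_M,\d\mu_\xi)$,
\[
\Omega(\xi_\G,X)=\Omega(\ri{\eta_\xi},X)-\Omega(\le{\eta_\xi},X)=(\d\mu_\xi)(\t_*X)+(\d\mu_\xi)(\s_*X)=\d(\mu_\xi\circ\t+\mu_\xi\circ\s)(X).
\]
Wait — the sign must be checked against the stated formula $J(x)=\mu\circ\t(x)-\mu\circ\s(x)$. Using \eqref{eq:left} as written, $\Omega(\le{\eta_\xi},X)=-(\d\mu_\xi)(\s_*X)$, so $i_{\xi_\G}\Omega=(\d\mu_\xi)(\t_*X)-(\d\mu_\xi)(\s_*X)\cdot(-1)$; one must track the sign in the generator $\xi_\G=\pm(\ri{\eta_\xi}-\le{\eta_\xi})$ so that it comes out to $\d(\mu_\xi\circ\t-\mu_\xi\circ\s)=\d J_\xi$. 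I would fix the sign convention so that $J_\xi=\mu_\xi\circ\t-\mu_\xi\circ\s$ and hence $\xi_\G=\ri{\eta_\xi}-\le{\eta_\xi}$ gives exactly $i_{\xi_\G}\Omega=\d J_\xi$, matching \eqref{eq:ham:action}.

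It then remains to check that $J:\G(L)\to\gg^*$ is $G$-equivariant and that it is a groupoid morphism with $J_*=j$. Equivariance follows from equivariance of $\mu$ together with the fact that $\s,\t$ are $G$-equivariant for the lifted action (the action commutes with concatenation). The morphism property $J(x\cdot y)=J(x)+J(y)$ is clear from the formula $J(x)=\mu\circ\t(x)-\mu\circ\s(x)$ since $\t(x\cdot y)=\t(x)$, $\s(x\cdot y)=\s(y)$, $\s(x)=\t(y)$; and differentiating $J$ along the identity section recovers $j$ because $\t_*=\sharp$ on $A(\G(L))\cong L$ and $\s_*=0$ there, so $J_*(X,\al)=\d\mu(\sharp(X,\al))=\d\mu(X)$, which equals $j(X,\al)=(\xi\mapsto i_{\xi_M}\al)$ by the regularity identity \eqref{moment:regular}. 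This establishes that $J$ is precisely the groupoid morphism integrating $j$, completing the proof of the lemma.
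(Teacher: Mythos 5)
Your route is genuinely different from the paper's. The paper's proof does not use any generator formula: it observes that, for fixed $\xi$, both sides of \eqref{eq:ham:action} are \emph{multiplicative} 1-forms on $\G(L)$ (the right-hand side is $\d$ of a groupoid homomorphism, the left-hand side the contraction of a multiplicative 2-form with an infinitesimal groupoid automorphism), so by \cite{BCWZ} it suffices to check the identity on $T_{\iota(m)}\G(L)=T_mM\oplus L_m$, where it reduces to the definition of $j$. Your proof instead identifies $\xi_\G$ with a combination of invariant vector fields and computes globally via \eqref{eq:left}--\eqref{eq:right}; done correctly, this buys more (it essentially establishes the innerness of the lifted action and the Hamiltonian $\tilde{G}\times\tilde{G}$-action that the paper derives later), but note that those later propositions are deduced \emph{from} this lemma, so you cannot quote them here -- and indeed you do not, but then the burden of the generator identification is entirely on you.

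As written, that key step does not close, on two counts. First, the sign: with \eqref{eq:left}--\eqref{eq:right} as stated, $\Omega(\ri{\eta_\xi}-\le{\eta_\xi},X)=\d\mu_\xi(\t_*X)+\d\mu_\xi(\s_*X)$, and this cannot be repaired by ``fixing the sign convention for $J$'': $\mu_\xi\circ\t+\mu_\xi\circ\s$ is not even a groupoid cocycle. With the convention implicit in \eqref{eq:left}, $\le{\eta}$ is the left-invariant field whose flow is right translation by the exponential bisection of $\eta$ (not by its inverse), so the inner flow $x\mapsto \phi^t(\t(x))\cdot x\cdot \phi^t(\s(x))^{-1}$ is generated by $\ri{\eta_\xi}+\le{\eta_\xi}$, not $\ri{\eta_\xi}-\le{\eta_\xi}$; one checks this on the pair groupoid $(M\times M,\t^*\omega-\s^*\omega)$, where $\ri{\eta}=(v,0)$, $\le{\eta}=(0,v)$ and the lifted diagonal action is generated by $(v,v)$. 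Taking $\xi_\G=\ri{\eta_\xi}+\le{\eta_\xi}$, your computation gives exactly $\d(\mu_\xi\circ\t-\mu_\xi\circ\s)=\d J_\xi$. Second, your uniqueness-of-integration argument identifies the lifted action with this inner flow only after you know that the derivation the inner flow induces on $\Gamma(L)$, namely $\llbracket\eta_\xi,\cdot\rrbracket$, coincides with $\Lie_{\xi_M}$; this is not contained in the Section 2 propositions you allude to. It does hold, but by a short computation you must include: by \eqref{eq:Courant:bracket}, $\llbracket(\xi_M,\d\mu_\xi),(Y,\be)\rrbracket=\bigl([\xi_M,Y],\,\Lie_{\xi_M}\be-\tfrac12\d\bigl(i_Y\d\mu_\xi+i_{\xi_M}\be\bigr)\bigr)$, and the last term vanishes because $L$ is isotropic, so the bracket is $(\Lie_{\xi_M}Y,\Lie_{\xi_M}\be)$. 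With these two repairs (and $G$ connected, as assumed), your argument is complete; your closing verification that $J=\mu\circ\t-\mu\circ\s$ is a groupoid morphism with $J_*=j$ via \eqref{moment:regular} is fine, and in fact also proves the paper's subsequent exact-cocycle statement. The price of your route is precisely the convention-sensitive bookkeeping that the paper's multiplicativity argument avoids.
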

We defer the proof of this lemma until the end of the proof.

Since $J:\G(L)\to \gg^*$ is a Lie algebroid morphism, its kernel is a subgroupoid of $\G(L)$. We claim that $0$ is a regular value of $J$, so that $J^{-1}(0)$ is a Lie subgroupoid with Lie algebroid $\Ker j\subset L$. In fact, our regularity assumption implies that $j$ is fiberwise surjective, so it follows that $\d_x J$ is surjective for any identity element $x\in\G(L)$. Since $J$ is a groupoid morphism, this implies that $\d_x J$ is surjective for any $x\in\G(L)$, so $J$ is a submersion.

We conclude that the Hamiltonian quotient $(J^{-1}(0)/G,\Omega_\text{red})$ is a presymplectic Lie groupoid over $M/G$ with Lie algebroid $\Ker j/G\simeq L_{M/G}$. This proves that $(M/G,L_{M/G})$ is an integrable Dirac structure.
\end{proof}

\begin{proof}[Proof of Lemma \ref{lem:hamitlonian}]
In order to prove the formula, we observe that for a fixed $\xi\in\gg$ both sides of \eqref{eq:ham:action} are multiplicative 1-forms in $\G$: the right-hand side is the differential of a groupoid homomorphism $\G\to\Rr$ while the left-hand side is the contraction of multiplicative 2-form with an infinitesimal groupoid automorphism. It follows (see \cite{BCWZ}) that it is enough to check that the \eqref{eq:ham:action} holds for tangent vectors $X\in T_{\iota(m)}\G(L)$. Since this tangent space splits into vectors tangent to the identity section and vectors tangent to the source fibers, we consider these two cases separately:
\begin{enumerate}[(a)]
\item $X\in TM$: in this case, we see that the left-hand side of \eqref{eq:ham:action} becomes $\Omega(\xi_\G,X)=0$, since the restriction of $\Omega$ to the identity section vanishes and both $X$ and $\xi_\G$ are tangent to $M$. On the other hand, the right-hand side of \eqref{eq:ham:action} becomes $\d J(X)$ which also vanishes because $J(x)=0$ for any identity arrow $x$.
\item $X\in \Ker\d_{\iota(m)}\s=L_m$: In this case, writing $X=(v,\al)$, the left-hand side of \eqref{eq:ham:action} becomes $\Omega(\xi_\G,X)=i_{\xi_M}\al$, while the right-hand side of \eqref{eq:ham:action} becomes $\d J(X)=j(v,\al)$. The definition of $j$ shows that both sides are equal.
\end{enumerate}
\end{proof}

One can check easily that the reduced presymplectic form $\Omega_\text{red}$ satisfies the non-degeneracy condition \eqref{eq:non:degeneracy}, and that the target map
\[ \t: (J^{-1}(0)/G,\Omega_\text{red})\to (M/G,L_{M/G}) \] 
is a $f$-Dirac map, so that $(J^{-1}(0)/G,\Omega_\text{red})$ is a presymplectic integration of $L_{M/G}$. However, in general, the groupoid $\G(L)\red G:=J^{-1}(0)/G$ is not source simply connected (and, in fact, it may even not be source connected), so that in general:
\[ \G(M,L)\red G \not = \G(M/G,L_{M/G}). \]
One can give conditions for this equality to hold. These are entirely similar to the ones found in \cite{FeOrRa} for the case of Poisson manifolds, so we will not consider this question here.


\section{Integrability of Hamiltonian Quotients}  
\label{sec:Ham:quotients}


We now turn to Hamiltonian actions and to the question of integrability of Hamiltonian quotients. Our aim is to prove Theorem \ref{thm:integ:Ham:quotient} from the Introduction and to explain the obstructions to integrability of Hamiltonian quotients, giving  geometric interpretation in the case of regular presymplectic leaves. Henceforth, we will assume that $(M,L,G,\mu)$ is a Hamiltonian $G$-space, with $G$ a connected Lie group, and that $(M,L)$ is an integrable Dirac structure, so that its Weinstein groupoid $(\G(L),\Omega)$ is a presymplectic groupoid integrating.

\subsection{Lifting of Hamiltonian actions}                                          
\label{sub:lift:ham}                                     

We saw in the previous section that any Dirac $G$-action on $(M,L)$ lifts to a Hamiltonian action of $G$ on $\G(L)$, which is Hamiltonian with moment map $J:\G(L)\to\gg^*$, a Lie groupoid morphism integrating the Lie algebroid morphism $j:L\to\gg^*$. For Hamiltonian $G$-spaces $(M,L,G,\mu)$ we have additionally:

\begin{prop}
For a Hamiltonian $G$-space $(M,L,G,\mu)$ the moment map $J:\G(L)\to\gg^*$ is an exact 1-cocycle:
\[  J(x)=\mu\circ\t(x)-\mu\circ\s(x), \]
\end{prop}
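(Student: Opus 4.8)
The plan is to show that the function $K:\G(L)\to\gg^*$ defined by $K(x):=\mu\circ\t(x)-\mu\circ\s(x)$ satisfies the defining property of the moment map $J$, namely that it is a groupoid morphism integrating the Lie algebroid morphism $j:L\to\gg^*$; by uniqueness of such an integration, $J=K$. First I would check that $K$ is a groupoid morphism into $(\gg^*,+)$: for composable arrows $x,y$ with $\s(x)=\t(y)$, one has $K(x\cdot y)=\mu\t(x\cdot y)-\mu\s(x\cdot y)=\mu\t(x)-\mu\s(y)=(\mu\t(x)-\mu\s(x))+(\mu\t(y)-\mu\s(y))=K(x)+K(y)$, using $\t(x\cdot y)=\t(x)$, $\s(x\cdot y)=\s(y)$, and $\s(x)=\t(y)$. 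Clearly $K$ vanishes on units, so it is a genuine groupoid $1$-cocycle, in fact an exact one (it is the coboundary of $\mu\in C^0(\G(L);\gg^*)$).

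The second step is to verify that $K$ integrates $j$, i.e.\ that the induced Lie algebroid morphism $K_*:L=A(\G(L))\to\gg^*$ equals $j$. Recall $A(\G(L))=\Ker\d_{\iota(M)}\s$, so for $X=(v,\alpha)\in L_m$, viewed as a tangent vector to the source fiber at the unit $1_m$, we compute $\d_{1_m}K(X)=\d_{1_m}(\mu\circ\t)(X)-\d_{1_m}(\mu\circ\s)(X)$. The second term vanishes since $X\in\Ker\d\s$. For the first term, the anchor of $A(\G(L))$ is $\d_{\iota(M)}\t$, which under the identification $A(\G(L))\cong L$ is the Dirac anchor $\sharp$, so $\d_{1_m}\t(X)=\sharp(X)=v$ and hence $\d_{1_m}K(X)=\d_m\mu(v)$. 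On the other hand, the moment map condition \eqref{eq:moment:map} gives, via \eqref{moment:regular}, that $\d_m\mu_\xi(v)=i_{\xi_M}\alpha=j(v,\alpha)_\xi$ for every $\xi\in\gg$ (using $(v,\alpha)\in L_m$). Therefore $\d_{1_m}K=j$ on each fiber $L_m$, so $K_*=j$ as Lie algebroid morphisms.

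Finally I would invoke the uniqueness of integrating morphisms: since $\G(L)$ is source-simply-connected, a Lie algebroid morphism $L\to\gg^*$ integrates to a unique Lie groupoid morphism $\G(L)\to\gg^*$ (this is the Lie II type statement for the Weinstein groupoid, and it is exactly how $J$ was characterized in Lemma \ref{lem:hamitlonian}). Both $J$ and $K$ are groupoid morphisms integrating $j$, hence $J=K$, which is the claimed formula.

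The only mild subtlety — and the step I would be most careful about — is the bookkeeping in identifying $A(\G(L))$ with $L$ so that the anchor $\d\t|_{\iota(M)}$ really does become $\sharp$ and the sign in \eqref{moment:regular} comes out right; this is precisely the computation already carried out in case (b) of the proof of Lemma \ref{lem:hamitlonian}, so I would simply point back to it rather than redo it. Everything else is the routine cocycle identity above.
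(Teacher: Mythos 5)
Your proof is correct and follows essentially the same route as the paper: both arguments observe that $J$ and $\mu\circ\t-\mu\circ\s$ are Lie groupoid morphisms from the source-simply-connected groupoid $\G(L)$ and therefore coincide once one checks they induce the same Lie algebroid morphism, which reduces (by differentiating at units and restricting to $L=\Ker\d\s$) to the identity $j=\d\mu\circ\sharp$, i.e.\ the moment map condition in the form \eqref{moment:regular}. The only difference is that you spell out the cocycle identity and the Lie II uniqueness step explicitly, which the paper leaves implicit.
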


\begin{proof}
Since both $J:\G(L)\to\gg^*$ and $\mu\circ\t-\mu\circ\s:\G(L)\to\gg^*$ are Lie groupoid morphisms, it is enough to check they induce the same Lie algebroid morphism $L\to \gg^*$. If one computes the differential of both sides and restricts to $L=\Ker\d\s$, one obtains that:
\[ j=\d\mu\circ\sharp, \]
which is exactly the moment map condition in the form \eqref{moment:regular}.
\end{proof}

Another form of the moment map condition is the following. The $G$-action on $M$ defines an action groupoid $G\ltimes M\tto M$ with Lie algebroid $\gg\ltimes M\to M$. The map $\mu:M\to\gg^*$ induces a Lie algebroid homomorphism from the action algebroid $\gg\ltimes M\to M$ to the Lie algebroid $L\to M$ by:
\[ \psi_\mu:\gg\ltimes M\to L,\quad (\xi,m)\mapsto (\xi_M|_m,\d_m\mu_{\xi}). \]
In general, this Lie algebroid morphism cannot be integrated to a Lie groupoid morphism $G\ltimes M\to \G(L)$. However, denoting by $\tilde{G}\ltimes M\tto M$ the action groupoid associated with the simply connected Lie group with Lie algebra $\gg$, we do have a Lie groupoid morphism:
\begin{equation}
\label{eq:inner:morphism}
\tilde{\Psi}_\mu: \tilde{G}\ltimes M\to \G(L), \quad (m,[\gamma])\mapsto [a]
\end{equation}
where $a:I\to L$ is the $A$-path:
\[ t\mapsto (\xi(t)_M|_{\gamma(t)\cdot m},\d_{\gamma(t)\cdot m}\mu_{\xi(t)}), \]
where $\xi(t)=\dot{\gamma}(t)\gamma(t)^{-1}\in\gg$. Let us write $q:\tilde{G}\to G$, $[\gamma]\mapsto \gamma(1)$, for the covering map so that its kernel is $q^{-1}(1)=\pi_1(G)\subset C(\tilde{G})$. It follows that:

\begin{prop}\label{prop:ham:inner}
For a Hamiltonian $G$-space $(M,L,G,\mu)$ the lifted $G$-action on $\G(L)$ is inner: for any $g\in G$ and $[a]\in\G(L)$ one has:
\begin{equation}
\label{lift:inner:action}
g\cdot [a]= \tilde{\Psi}_\mu([\gamma],\t([a]))\cdot [a] \cdot \tilde{\Psi}_\mu([\gamma]^{-1},\s([a])).
\end{equation}
if $[\gamma]\in\tilde{G}$ is any element such that $\gamma(1)=g$. In particular, the image $\tilde{\Psi}_\mu(M\times\pi_1(G))\subset \G(L)$ is a normal Lie group bundle, whose fiber at each $m\in M$ is contained in the center of the isotropy group $\G(L)_m$:
\[ \tilde{\Psi}_\mu([\gamma],\t([a]))\cdot [a]=[a]\cdot \tilde{\Psi}_\mu([\gamma],\s([a])),\quad \forall [\gamma]\in\pi_1(G),[a]\in\G(L). \]
\end{prop}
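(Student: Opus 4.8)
The plan is to verify the inner-action formula \eqref{lift:inner:action} at the level of $A$-paths and $A$-homotopies, and then to derive the remaining assertions as formal consequences. First I would fix $[\gamma]\in\tilde G$ with $\gamma(1)=g$ and, writing $\xi(t)=\dot\gamma(t)\gamma(t)^{-1}$, consider for a given $A$-path $a\colon I\to L$ over $\alpha\colon I\to M$ the $A$-path obtained by concatenating the three $A$-paths on the right-hand side: the $\tilde\Psi_\mu$-factor over the orbit $t\mapsto\gamma(t)\cdot\t(a(1))$, then $a$ suitably translated, then the inverse $\tilde\Psi_\mu$-factor over the orbit of $\s(a(0))$. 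The claim is that this concatenation is $A$-homotopic to the $A$-path $g\cdot a$. The natural way to see this is to use the evolution-equation/homotopy machinery recalled in Section~\ref{sec:Dirac:quotients}: take the $\epsilon$-family of gauge elements $[\gamma_\epsilon]$ interpolating between the constant identity path ($\epsilon=0$) and $[\gamma]$ ($\epsilon=1$), let it act on $a$, and observe that $\partial_\epsilon(\gamma_\epsilon\cdot a)$ is governed by the time-dependent section $\xi_\epsilon$ of $\gg$ pushed into $L$ via $\psi_\mu$; the resulting $A$-homotopy is precisely the ``rectangle'' whose boundary compares $g\cdot a$ with the triple concatenation. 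Concretely this amounts to checking that the two $A$-paths solve the same evolution equation \eqref{evolution} with matching boundary data, which is a routine computation once one unwinds the definition of $\tilde\Psi_\mu$ in \eqref{eq:inner:morphism} and uses that $\psi_\mu$ is a Lie algebroid morphism (which is the infinitesimal moment map condition \eqref{moment:regular}).

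Granting \eqref{lift:inner:action}, the centrality statement is immediate: take $[\gamma]\in\pi_1(G)=q^{-1}(1)\subset\tilde G$, so that $g=\gamma(1)=1$ acts trivially, $g\cdot[a]=[a]$. Plugging $g=1$ into \eqref{lift:inner:action} gives
\[
[a]=\tilde\Psi_\mu([\gamma],\t([a]))\cdot[a]\cdot\tilde\Psi_\mu([\gamma]^{-1},\s([a])),
\]
and since $\tilde\Psi_\mu$ is a groupoid morphism, $\tilde\Psi_\mu([\gamma]^{-1},\s([a]))=\tilde\Psi_\mu([\gamma],\s([a]))^{-1}$; rearranging yields exactly the displayed identity
\[
\tilde\Psi_\mu([\gamma],\t([a]))\cdot[a]=[a]\cdot\tilde\Psi_\mu([\gamma],\s([a])).
\]
Taking $[a]=1_m$ shows $\tilde\Psi_\mu([\gamma],m)$ lies in the isotropy group $\G(L)_m$; taking $[a]$ to be an arbitrary element of $\G(L)_m$ then shows $\tilde\Psi_\mu([\gamma],m)$ commutes with every element of $\G(L)_m$, i.e.\ it is central in $\G(L)_m$. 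Finally, that $\tilde\Psi_\mu(M\times\pi_1(G))$ is normal follows from the same conjugation formula applied with a general $[a]$: conjugating $\tilde\Psi_\mu([\gamma],\s([a]))$ by $[a]$ produces $\tilde\Psi_\mu([\gamma],\t([a]))$, which is again in the image; that it is a (smooth) Lie group bundle over $M$ follows because $\tilde\Psi_\mu$ restricted to $M\times\pi_1(G)$ is a smooth family of group homomorphisms $\pi_1(G)\to\G(L)_m$ parametrized by $m$, with image in the (abelian, by centrality) isotropy groups.

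The main obstacle I expect is the first step: producing the explicit $A$-homotopy realizing \eqref{lift:inner:action}. One has to be careful with the bookkeeping of which variable is ``time'' and which is ``homotopy parameter'' — exactly the subtlety emphasized in the discussion around \eqref{evolution}–\eqref{hintegral} — and with the concatenation/reparametrization conventions, since the right-hand side of \eqref{lift:inner:action} is a product of three arrows and one must represent it by a single concatenated $A$-path before comparing. An alternative, perhaps cleaner, route is to avoid $A$-paths entirely when $(M,L)$ is integrable: both sides of \eqref{lift:inner:action}, viewed as maps $\tilde G\times\G(L)\to\G(L)$, are smooth, and it suffices to check they agree infinitesimally and then invoke source-connectedness of $\tilde G\ltimes\G(L)$ — i.e.\ differentiate \eqref{lift:inner:action} in $g$ at the identity and verify the resulting identity of vector fields on $\G(L)$, which reduces to the statement that the lifted infinitesimal generator $\xi_{\G}$ equals $\ri{\psi_\mu(\xi)}-\le{\psi_\mu(\xi)}$, a fact one reads off from Lemma~\ref{lem:hamitlonian} together with \eqref{eq:right} and \eqref{eq:left}. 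I would present the infinitesimal argument as the main proof and relegate the $A$-path picture to a remark.
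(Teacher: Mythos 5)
Your overall strategy, namely comparing the lifted $\tilde{G}$-action with the "conjugation" action infinitesimally and then invoking connectedness of $\tilde{G}$, with the centrality and normality statements extracted formally from the case $g=1$, $[\gamma]\in\pi_1(G)$, is essentially the paper's route: the paper observes that both sides of \eqref{lift:inner:action} define $\tilde{G}$-actions by groupoid automorphisms which are Hamiltonian with the same moment map $J$, and concludes they coincide because $\tilde{G}$ is connected. The second half of your argument (deriving $\tilde{\Psi}_\mu([\gamma],\t([a]))\cdot[a]=[a]\cdot\tilde{\Psi}_\mu([\gamma],\s([a]))$, centrality in $\G(L)_m$, and normality of the image) is correct and matches what the paper intends.

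The gap is in your key step. You claim the generator identity $\xi_{\G}=\ri{\psi_\mu(\xi)}-\le{\psi_\mu(\xi)}$ can be "read off" from Lemma \ref{lem:hamitlonian} together with \eqref{eq:left} and \eqref{eq:right}. Those facts only compare the contractions of the two vector fields with $\Omega$, and $\Omega$ is merely presymplectic: $i_X\Omega=i_Y\Omega$ does not imply $X=Y$. To conclude equality you must in addition check that both vector fields are multiplicative (infinitesimal groupoid automorphisms) and have the same $\s$- and $\t$-projections (namely $\xi_M$), so that their difference lies in $\Ker\Omega\cap\Ker\d\s\cap\Ker\d\t$, and then invoke the non-degeneracy condition \eqref{eq:non:degeneracy}, which for multiplicative $2$-forms holds along all of $\G(L)$ and not only at units. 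This supplementary argument is exactly what the phrase "by groupoid automorphisms" is carrying in the paper's proof; without it the inference fails (a nonzero multiplicative vector field can be $\Omega$-isotropic). A further caution: with the conventions \eqref{eq:left} and \eqref{eq:right} as stated, $i_{\ri{\eta}-\le{\eta}}\Omega=\t^*\d\mu_\xi+\s^*\d\mu_\xi$, which is not $\d J_\xi=\t^*\d\mu_\xi-\s^*\d\mu_\xi$; whether the conjugation generator is $\ri{\eta}-\le{\eta}$ or $\ri{\eta}+\le{\eta}$ depends on whether $\le{\eta}$ is defined through the inversion map, so the signs must be pinned down before the comparison. Your alternative $A$-homotopy route is viable in principle, but since you do not carry out the bookkeeping it cannot substitute for the missing step.
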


\begin{proof}
Note, on the one hand, that the action of $\tilde{G}$ on $M$ factors through the action of $G$ on $M$. It follows that we have a lifted action of $\tilde{G}$ on $\G(L)$, defined by:
\[ [\gamma]\cdot [a]=\gamma(1)\cdot [a]. \]
In particular, elements of $\pi_1(G)$ act trivially on $\G(L)$. It follows that this $\tilde{G}$-action is by groupoid automorphisms and Hamiltonian with moment map $J:\G(L)\to\gg^*$.

On the other hand, one checks that the right hand side of formula \eqref{lift:inner:action} defines an action of $\tilde{G}$ on $\G(L)$ by groupoid automorphisms which is Hamiltonian with moment map $J:\G(L)\to\gg^*$. 

Since $\tilde{G}$ is connected, these two actions must coincide, and the result follows.
\end{proof}

Let us observe now that we can build a $\tilde{G}\times \tilde{G}$-action on $\G(L)$ by setting:
\[ ([\gamma_1],[\gamma_2])\cdot [a]= \tilde{\Psi}_\mu([\gamma_1],\t([a]))\cdot [a] \cdot \tilde{\Psi}_\mu([\gamma_2]^{-1},\s([a])).\]
This action extends the $G$-action on $\G(L)$, but it is not by groupoid automorphisms. However, it still is a Hamiltonian action:

\begin{prop}
\label{prop:ham:GG}
For a Hamiltonian $G$-space $(M,L,G,\mu)$ the $\tilde{G}\times \tilde{G}$-action on $(\G(L),\Omega)$ is Hamiltonian with moment map $\tilde{J}:\G(L)\to\gg^*\oplus\gg^*$ given by:
\[ \tilde{J}= (\mu\circ\t,-\mu\circ\s).\]
\end{prop}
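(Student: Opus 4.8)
The plan is to verify the Hamiltonian condition $i_{X}\Omega=\d\tilde J_X$ for each infinitesimal generator $X$ of the $\tilde G\times\tilde G$-action, by splitting an element $\xi=(\xi_1,\xi_2)\in\gg\oplus\gg$ into its two factors. By linearity of both sides it suffices to treat the two cases $\xi=(\xi_1,0)$ and $\xi=(0,\xi_2)$ separately. For the first factor, note that by Proposition \ref{prop:ham:inner} the restriction of the $\tilde G\times\tilde G$-action to $\tilde G\times\{1\}$ is, up to right multiplication by the central elements $\tilde\Psi_\mu$, the original lifted $G$-action on $\G(L)$; more precisely the infinitesimal generator of $(\xi_1,0)$ differs from the generator $(\xi_1)_{\G}$ of the lifted $G$-action by the right-invariant vector field $\overrightarrow{\psi_\mu(\xi_1)}$ associated to the section $\psi_\mu(\xi_1)=((\xi_1)_M,\d\mu_{\xi_1})\in\Gamma(L)$ coming from the Lie algebroid morphism $\psi_\mu$ of Section \ref{sub:lift:ham}. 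I would first isolate this as a short computation: differentiating formula \eqref{lift:inner:action} at the identity of $\tilde G$ in the direction $\xi_1$, using that $\s$ is constant along the factor $\tilde\Psi_\mu([\gamma],\t([a]))\cdot[a]$, gives the generator of the first $\tilde G$-factor as $(\xi_1)_{\G}$-like term plus $\overrightarrow{\psi_\mu(\xi_1)}$ — in fact the cleanest statement is that the first-factor generator equals the \emph{left}-invariant vector field plus a right-invariant one, or purely the right-invariant field $\overrightarrow{\psi_\mu(\xi_1)}$ once one accounts for the identification; I expect the bookkeeping of left versus right invariance here, and the placement of the minus sign, to be the one delicate point.

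Granting that, the first case reduces to computing $\Omega(\overrightarrow{\psi_\mu(\xi_1)},X)$ for $X\in T\G(L)$, and formula \eqref{eq:right} gives immediately
\[
\Omega(\overrightarrow{\psi_\mu(\xi_1)},X)=\d\mu_{\xi_1}(\t_*X)=\d(\mu_{\xi_1}\circ\t)(X),
\]
which is exactly $\d\tilde J_{(\xi_1,0)}(X)$ since $\tilde J_{(\xi_1,0)}=\mu_{\xi_1}\circ\t$. Symmetrically, for $\xi=(0,\xi_2)$ the infinitesimal generator is the \emph{left}-invariant field $\overleftarrow{\psi_\mu(\xi_2)}$ (with a sign coming from the inverse $[\gamma_2]^{-1}$ in the formula), and formula \eqref{eq:left} yields $\Omega(\overleftarrow{\psi_\mu(\xi_2)},X)=-\d\mu_{\xi_2}(\s_*X)=\d(-\mu_{\xi_2}\circ\s)(X)=\d\tilde J_{(0,\xi_2)}(X)$. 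Adding the two cases gives $i_X\Omega=\d\tilde J_X$ for all $\xi\in\gg\oplus\gg$, which is the claim. The $\tilde G\times\tilde G$-equivariance of $\tilde J=(\mu\circ\t,-\mu\circ\s)$ with respect to the coadjoint action on $\gg^*\oplus\gg^*$ follows from $G$-equivariance of $\mu$ together with the fact that $\t$ intertwines the first-factor action with the $G$-action on $M$ and $\s$ intertwines the second-factor action with the $G$-action on $M$ (and both are $\tilde\Psi_\mu$-invariant by the centrality statement), so $\mu\circ\t$ and $\mu\circ\s$ transform correctly.

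One point worth stating carefully as a preliminary lemma, or at least a remark, is the identification of the infinitesimal generators with $\overrightarrow{\psi_\mu(\xi_1)}$ and $\overleftarrow{\psi_\mu(\xi_2)}$: this is where all the content sits, and it is essentially a reading of \eqref{eq:inner:morphism}–\eqref{lift:inner:action}, namely that $\tilde\Psi_\mu$ sends the $A$-path generated by a curve $\gamma$ in $\tilde G$ through the identity to the corresponding flow of $\overrightarrow{\psi_\mu}$, so differentiating the left translation by $\tilde\Psi_\mu([\gamma_1],\t([a]))$ produces a right-invariant vector field and differentiating the right translation by $\tilde\Psi_\mu([\gamma_2]^{-1},\s([a]))$ produces a left-invariant one. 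Since $\tilde\Psi_\mu$ restricted to $\pi_1(G)$ is central, these vector fields are genuinely well defined on $\G(L)$ and do not depend on the choice of lifts $[\gamma_i]$. Once this identification is in hand the rest is the two-line application of \eqref{eq:left} and \eqref{eq:right} above, so the main obstacle is purely the careful verification of that identification, including the left/right and sign conventions forced by the definitions in Section \ref{sub:lift:ham}.
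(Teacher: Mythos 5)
Your proposal is correct and follows essentially the same route as the paper: there, too, one identifies the infinitesimal generators of the $\tilde{G}\times\tilde{G}$-action with the invariant vector fields attached to the sections $\eta_i=((\xi_i)_M,\d\mu_{\xi_i})\in\Gamma(L)$ and then applies \eqref{eq:left} and \eqref{eq:right} to get $i_{(\xi_1,\xi_2)_{\G(L)}}\Omega=\d(\mu_{\xi_1}\circ\t)-\d(\mu_{\xi_2}\circ\s)$. The left/right and sign bookkeeping you flag is indeed the only delicate point (it hinges on the convention for left-invariant vector fields on a groupoid), and your resolution of it is consistent with \eqref{eq:left}--\eqref{eq:right} and the stated moment map $\tilde{J}=(\mu\circ\t,-\mu\circ\s)$.
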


\begin{proof}
The proof follows by some straight forward computations: first, one checks that the infinitesimal generators of the $\tilde{G}\times \tilde{G}$-action on $\G(L)$ are the vector fields:
\[ (\xi_1,\xi_2)_{\G(L)}=\le{\eta_1}-\ri{\eta_2}, \]
where $\eta_i=((\xi_i)_M,\d\mu_{\xi_i})$. Then using \eqref{eq:left} and \eqref{eq:right} we find that:
\[ i_{(\xi_1,\xi_2)_{\G(L)}}\Omega=\d(\mu_{\xi_1}\circ\t)-\d(\mu_{\xi_2}\circ\s), \]
so the result follows.
\end{proof}

If the action of $G$ on $M$ is free and proper, then the lifted action of $G$ on $\G(L)$ is also free and proper. However, the $\tilde{G}$-action, and also the $\tilde{G}\times \tilde{G}$-action, may fail to be proper and/or free. For this reason, we would like to have a groupoid homomorphism $\Psi_\mu:G\times M\to \G$, where $\G$ is some presymplectic groupoid integrating $(M,L)$, which would allow us to define a $G\times G$-action. 

\begin{prop}
\label{prop:embedded}
For a Hamiltonian $G$-space $(M,L,G,\mu)$, the Lie algebroid morphism $\psi_\mu:\gg\ltimes M\to\G(L)$ integrates to a Lie groupoid morphism $\Psi_\mu:G\ltimes M\to \G$, where $\G$ is some presymplectic groupoid integrating $(M,L)$ if and only if $\Mt:=\tilde{\Psi}_\mu(M\times\pi_1(G))\subset \G(L)$ is an embedded Lie subgroupoid.
\end{prop}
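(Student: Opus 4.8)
The plan is to show the equivalence by using the fact that $\G(L)$ is the $\s$-simply connected integration of $L$ and analyzing how a Lie groupoid morphism $\Psi_\mu : G\ltimes M \to \G$ covering $\psi_\mu$ must factor through $\widetilde\Psi_\mu$. Suppose first that such a $\Psi_\mu$ exists, for some presymplectic groupoid $\G$ integrating $(M,L)$. By the integrability results, $\G$ is covered by the $\s$-simply connected integration $\G(L)$, say via $p : \G(L)\to\G$, and $p$ is a surjective submersion which restricts to a covering on each $\s$-fiber; moreover $\Ker p$ is a bundle of discrete subgroups contained in the center of the isotropy groups of $\G(L)$. Since the pulled-back morphism $p\circ\widetilde\Psi_{\mu}$ and $\Psi_\mu \circ q$ (where $q:\tilde G\ltimes M\to G\ltimes M$ is the covering) both integrate the same Lie algebroid morphism $\psi_\mu$ on the source-simply-connected groupoid $\tilde G\ltimes M$, they coincide by uniqueness of integration of Lie algebroid morphisms. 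Evaluating on $M\times\pi_1(G) = \Ker(q|_{\text{units collapsed}})$, we get $p(\Mt) = p(\widetilde\Psi_\mu(M\times\pi_1(G))) = \Psi_\mu(q(M\times\pi_1(G))) = \Psi_\mu(M\times\{1\}) = \iota(M)$, the identity section. Hence $\Mt\subset\Ker p$. Since $\Ker p\subset\G(L)$ is an embedded Lie subgroupoid (a bundle of discrete groups over $M$), and $\Mt$ is, by Proposition \ref{prop:ham:inner}, a subgroup bundle of $\Ker p$, we still need that $\Mt$ itself is embedded; this follows because $\Mt$ is the image under the fiberwise covering map of the closed subgroup bundle $M\times\pi_1(G)$ composed with the observation that $\Mt\subset\Ker p$ which is already a discrete (hence embedded, $0$-dimensional over $M$) subgroupoid — so $\Mt$ is a union of connected components of $\Ker p$ over each point, which makes it embedded.

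Conversely, assume $\Mt = \widetilde\Psi_\mu(M\times\pi_1(G))\subset\G(L)$ is an embedded Lie subgroupoid. By Proposition \ref{prop:ham:inner}, $\Mt$ is a normal Lie group bundle whose fibers lie in the centers of the isotropy groups $\G(L)_m$; being embedded and a bundle of discrete groups (each fiber is a quotient of $\pi_1(G)$), it is a normal, embedded, totally disconnected subgroupoid. Then the quotient $\G := \G(L)/\Mt$ is a Lie groupoid integrating $L$: the quotient map $p:\G(L)\to\G$ is a fiberwise covering, and since $\Mt$ has discrete fibers the Lie algebroid is unchanged, $A(\G)\cong L$. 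Because $\Mt$ is generated (fiberwise) by $\widetilde\Psi_\mu(\pi_1(G))$, which are exactly the elements by which $\tilde G$ fails to descend to $G$, the composite morphism $p\circ\widetilde\Psi_\mu : \tilde G\ltimes M\to\G$ kills $M\times\pi_1(G)$ and therefore descends to a Lie groupoid morphism $\Psi_\mu : G\ltimes M\to\G$ with $\Psi_\mu\circ q = p\circ\widetilde\Psi_\mu$; differentiating shows $\Psi_\mu$ integrates $\psi_\mu$. Finally, $\G$ carries a presymplectic form: the multiplicative form $\Omega$ on $\G(L)$ descends to $\G$ because $\Mt\subset\Ker\Omega$ — indeed elements of $\Mt$ lie over units, and formulas \eqref{eq:left}, \eqref{eq:right} together with the fact that the isotropy directions at units are annihilated by $\Omega$ on the $\s$- and $\t$-fibers show that tangent vectors to $\Mt$ are in the kernel of $\Omega$; the non-degeneracy condition \eqref{eq:non:degeneracy} survives since it only involves the units, where $p$ is a local diffeomorphism.

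I expect the main obstacle to be the careful verification, in the forward direction, that $\Mt$ being contained in $\Ker p$ for \emph{some} integration $\G$ actually forces $\Mt$ to be embedded — one must rule out pathologies where $\pi_1(G)$ maps to $\G(L)_m$ with image that varies non-locally-trivially in $m$, or where the image is dense in a positive-dimensional isotropy subgroup. The key point is that $\Mt\subset\Ker p$ and $\Ker p$ is a bundle of \emph{discrete} groups over $M$ (because $p$ is a fiberwise covering between groupoids with the same Lie algebroid), so $\Mt$ automatically inherits a $0$-dimensional embedded structure as a sub-bundle-of-groups; this is where the hypothesis that $\G$ is a genuine (Hausdorff) Lie groupoid is used. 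The reverse direction is comparatively routine once one knows that quotients of Lie groupoids by embedded normal totally-disconnected subgroupoids are again Lie groupoids, which is standard.
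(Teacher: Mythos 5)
Your proposal is correct and follows essentially the same route as the paper: one direction quotients $\G(L)$ by the embedded subgroupoid $\Mt$, descends $\tilde{\Psi}_\mu$ and the multiplicative form $\Omega$ (whose pullback to $\Mt$ vanishes) to get the presymplectic groupoid $\G=\G(L)/\Mt$ with the desired morphism; the other direction uses uniqueness of integration on the source-simply-connected groupoid $\tilde{G}\ltimes M$ to place $\Mt$ inside the discrete kernel of the covering $\G(L)\to\G$, forcing it to be embedded. The points you treat briefly (why $\Omega$ descends, why containment in the discrete kernel yields embeddedness) are exactly the points the paper also leaves terse, so there is no substantive divergence.
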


\begin{rem}
The condition that $\Mt:=\tilde{\Psi}_\mu(M\times\pi_1(G))\subset \G(L)$ is an embedded Lie subgroupoid implies that, for each $m\in M$, the subgroup $\Mt_m\subset\G(L)_m$ is discrete. In fact this conditions is equivalent to the existence of an open neighborhood $U$ of the identity section in $\G(L)$ such that $U\cap (\Mt-\iota(M))=\emptyset$. We will see later that this condition is precisely the source of the obstructions to integrability of Hamiltonian quotients.
\end{rem}

\begin{proof}
If $\Mt:=\tilde{\Psi}_\mu(M\times\pi_1(G))\subset \G(L)$ is an embedded Lie subgroupoid then the quotient $\G:=\G(L)/\Mt$ is a Lie groupoid integrating $L$. It follows that the Lie groupoid morphism $\tilde{\Psi}_\mu:\tilde{G}\ltimes M\to \G(L)$ descends to a Lie groupoid morphism $\Psi_\mu:G\ltimes M\to \G$: 
\[
\xymatrix{\SelectTips{cm}{}
\tilde{G}\ltimes M\ar[r]^{\tilde{\Psi}_\mu}\ar[d]& \G(L)\ar[d]\\
G\ltimes M\ar[r]^-{\Psi_\mu}& *+[r]{\G=\G(L)/\Mt}
}
\]
Moreover, the pullback of $\Omega$ to $\Mt\subset\G(L)$ vanishes, so we have an induced presymplectic form on the quotient $\G$, which is still multiplicative and satisfies the non-degeneracy condition. Briefly, $\G$ is a presymplectic groupoid.

For the converse, if there exists a Lie groupoid morphism $\Psi_\mu:G\ltimes M\to \G$, into some presymplectic groupoid $\G$ integrating $(M,L)$, then we will have a commutative diagram as above. It will follow that $\Delta:=\tilde{\Psi}_\mu(M\times\pi_1(G))\subset \G(L)$ is contained in the kernel of the covering map $\G(L)\to \G$ and hence it must be an embedded Lie subgroupoid.
\end{proof}

\subsection{The Integration of Hamiltonian quotients}                        
\label{sub:int:ham}                                     

Let $(M,L,G,\mu)$ be a Hamiltonian $G$-space. Assume that $(M,L)$ is integrable and the action is proper, free and regular. We will use the same notations as in the previous paragraph. In particular, we denote by $\tilde{\Psi}_\mu: \tilde{G}\ltimes M\to \G(L)$ the Lie groupoid morphism integrating the Lie algebroid morphism:
\[ \psi_\mu:\gg\ltimes M\to L,\quad (\xi,m)\mapsto (\xi_M|_m,\d_m\mu_{\xi}). \]

\begin{defn}
The \textbf{$G$-monodromy morphism} at $m$ is the map:
\[ \partial_m:\pi_1(G)\to \G(L)_m, \quad  [\gamma]\mapsto \tilde{\Psi}_\mu([\gamma],m). \]
The image $\Mt_m(G)=\im\partial_m$ is called the \textbf{$G$-monodromy group} at $m$ of the Hamiltonian action. 
\end{defn}

In particular, if $\Mt=\bigcup_{m\in M}\Mt_m$, we have that:
\[ \Mt=\tilde{\Psi}_\mu(M\times\pi_1(G))\subset \G(L). \]
Note that by Proposition \ref{prop:ham:inner}, $\Mt$ is a normal Lie subgroupoid of $\G(L)$ and each $G$-monodromy group $\Mt_m$ is a subgroup of the center of the isotropy group $\G(L)_m$.

To state our main result in this section we also set:
\[  \Mt_0:=\Mt|_{\mu^{-1}(0)}. \]
Then we have:

\begin{thm}
Let $(M,L,G,\mu)$ be a Hamiltonian $G$-space and assume that $(M,L)$ is integrable. If the action is proper and free and the moment map is regular, then $(M\red G,L_{M\red G})$ is an integrable Dirac structure if $\Mt_0\subset\G(L)$ is an embedded Lie subgroupoid.
\end{thm}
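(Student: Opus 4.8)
The plan is to reduce the integrability of $(M\red G, L_{M\red G})$ to the construction of an explicit presymplectic groupoid, exactly as in the proof of Theorem \ref{thm:int:Dirac:quotients}, but now carrying out reduction by the $\tilde G\times\tilde G$-action instead of by $G$. First I would invoke Proposition \ref{prop:embedded}: the hypothesis that $\Mt_0\subset\G(L)$ is an embedded Lie subgroupoid is, after restricting $\G(L)$ to a saturated open neighborhood of $\mu^{-1}(0)$ on which $\Mt$ itself becomes embedded (this uses that $\Mt$ is a normal Lie group bundle with discrete isotropy once $\Mt_0$ is embedded, as noted in the Remark after Proposition \ref{prop:embedded}), equivalent to the existence of a presymplectic groupoid $\G:=\G(L)/\Mt$ integrating $(M,L)$ together with a Lie groupoid morphism $\Psi_\mu\colon G\ltimes M\to\G$ integrating $\psi_\mu$. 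I should first check that the restriction to a neighborhood is harmless: integrability of a Dirac structure is a local-along-$M$ property in the sense that $L_{M\red G}$ lives over $\mu^{-1}(0)/G$, so it suffices to integrate $L$ restricted to such a neighborhood, and $\mu$, the $G$-action and the regularity hypothesis all restrict.

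Next, with $\G$ and $\Psi_\mu$ in hand, I would push Proposition \ref{prop:ham:GG} down from $\G(L)$ to $\G$: the formula $([\gamma_1],[\gamma_2])\cdot[a]=\tilde\Psi_\mu([\gamma_1],\t[a])\cdot[a]\cdot\tilde\Psi_\mu([\gamma_2]^{-1},\s[a])$ is invariant under the quotient by $\Mt$ (since $\Mt=\tilde\Psi_\mu(M\times\pi_1(G))$ is exactly the kernel of $\tilde G\to G$ acting, and the two $\tilde G$-factors in $\Psi_\mu$ now genuinely descend to $G$-factors), so it defines a $G\times G$-action on $\G$. This action is Hamiltonian with moment map $\bar J=(\mu\circ\t,-\mu\circ\s)\colon\G\to\gg^*\oplus\gg^*$, the descent of $\tilde J$ from Proposition \ref{prop:ham:GG}, and the induced multiplicative presymplectic form $\Omega$ on $\G$ still satisfies $i_{(\xi_1,\xi_2)_\G}\Omega=\d(\mu_{\xi_1}\circ\t)-\d(\mu_{\xi_2}\circ\s)$. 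I then need: the $G\times G$-action is proper and free (properness and freeness of the original $G$-action on $M$ propagate to $\G$ because the action is, up to the inner twisting by $\Psi_\mu$, essentially a left-right translation combined with the lifted $G$-action on $\G$, which is proper and free by the argument in the proof of Theorem \ref{thm:int:Dirac:quotients}); and $0$ is a regular value of $\bar J$. Regularity of $\bar J$ at $0$ follows from the regularity of the moment map: as in the proof of Theorem \ref{thm:int:Dirac:quotients}, fiberwise surjectivity of $j\colon L\to\gg^*$ forces $\d_x\bar J$ to be surjective at units, and multiplicativity of $\bar J$ propagates surjectivity of the differential to all of $\bar J^{-1}(0)$.

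Finally I would form the Hamiltonian quotient $\G\red(G\times G):=\bar J^{-1}(0)/(G\times G)$. By the reduction of presymplectic groupoids (the groupoid analogue already used in Theorem \ref{thm:int:Dirac:quotients}, relying on Theorem \ref{thm:presymplectic:groupoid} and Lemma-style computations with \eqref{eq:left}--\eqref{eq:right}), this is a Lie groupoid over $\bar J^{-1}(0)\cap M/(G\times G)=\mu^{-1}(0)/G=M\red G$, equipped with the reduced multiplicative 2-form $\Omega_{\mathrm{red}}$. I must identify its Lie algebroid with $L_{M\red G}$: the Lie algebroid of $\bar J^{-1}(0)$ is $\Ker\d\bar J|_M$, which by the diagram relating $j$, $\sharp$ and $\d\mu$ is precisely $\{(X,\alpha)\in L: \d\mu(X)=0\}=i_0^*L$ restricted appropriately; quotienting by the $G\times G$-action on units (which on $M$ is just the diagonal-collapsed $G$-action on $\mu^{-1}(0)$) gives $\pi_{0*}(i_0^*L)=L_{M\red G}$ by the characterization in Theorem \ref{thm:Dirac:ham:reduction}. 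Checking non-degeneracy \eqref{eq:non:degeneracy} of $\Omega_{\mathrm{red}}$ and that the target map is f-Dirac is then the same routine verification as at the end of Section \ref{sub:Dirac:quotients}. I expect the main obstacle to be the bookkeeping in the reduction by a non-automorphism action: unlike the $G$-action of Section \ref{sec:Dirac:quotients}, the $G\times G$-action does not act by groupoid automorphisms, so one must be careful that $\bar J^{-1}(0)$ is nonetheless a subgroupoid (it is, since $\bar J$ is a 1-cocycle, so $\bar J^{-1}(0)=\Ker\bar J$) and that the quotient inherits a groupoid structure compatible with $\Omega_{\mathrm{red}}$ — this is where the precise interplay between the left-right twisting and multiplicativity has to be pinned down.
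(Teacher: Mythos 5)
Your proposal follows essentially the same route as the paper's proof: restrict to a neighborhood of $\mu^{-1}(0)$ where $\Mt$ is embedded, use Proposition \ref{prop:embedded} to pass to $\G=\G(L)/\Mt$ with the morphism $\Psi_\mu$, build the twisted $G\times G$-action with moment map $\bar J=(\mu\circ\t,-\mu\circ\s)$ via Proposition \ref{prop:ham:GG}, check properness, freeness and regularity of $0$, and verify that $\bar J^{-1}(0)$ is a subgroupoid whose multiplication and presymplectic form descend to $\bar J^{-1}(0)/(G\times G)$, a presymplectic groupoid integrating $M\red G$. The steps you flag as needing care (the non-automorphism nature of the $G\times G$-action and the identification of the reduced Lie algebroid with $L_{M\red G}$) are exactly the points the paper handles by its cocycle and diagram-chasing arguments, so the proposal is correct.
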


\begin{proof}
Assume first that $\Mt_0\subset\G(L)$ is an embedded Lie subgroupoid. We can assume further that there exists a open neighborhood $U$ of $\mu^{-1}(0)$ where $\Mt|_U\subset\G(L)_U$ is an embedded Lie subgroupoid. To simplify the notation we assume that $U=M$. Then, by Proposition \ref{prop:embedded}, we have that $\G:=\G(L)/\Mt$ is a presymplectic groupoid integrating $(M,L)$ and we have a groupoid morphism:
\[ \Psi_\mu:G\ltimes M\to \G, \]
This allows us to define an action of $G\times G$ on $\G$ by setting:
\[ (g_1,g_2)\cdot x= \Psi_\mu(g_1,\t(x))\cdot x \cdot \Psi_\mu(g_2^{-1},\s(x)). \]
Furthermore, by Proposition \ref{prop:ham:GG}, this action is Hamiltonian with moment map $\bar{J}:\G\to \gg^*\oplus\gg^*$, given by:
\[ \bar{J}(x)=(\mu\circ\t(x),-\mu\circ\s(x)). \]
Since the original $G$-action is proper and free, this $G\times G$-action is also proper and free. The regularity assumption on the moment map $\mu$ implies that $0$ is a regular value of $\bar{J}$, so the Hamiltonian quotient $\G\red G\times G:=\bar{J}^{-1}(0)/G$ is a presymplectic manifold. We claim that it is in fact a presymplectic groupoid integrating $M\red G$.

First, notice that if $\bar{J}(x)= \bar{J}(y)=0$ then $\bar{J}(x\cdot y)=0$ and $\bar{J}(x^{-1})=0$, so $\bar{J}^{-1}(0)\subset \G$ is a Lie subgroupoid. Next, observe that if $x,y,x',y'\in\G$ are such that $x\cdot y$ and $x'\cdot y'$ are defined, and if $x'=(g_1,g_2)x$ and $y'=(h_1,h_2)y$, then one must have $g_2=h_1$ and it follows that $x'\cdot y'=(g_1,h_2)x\cdot y$. Hence, the groupoid operation descends to a well defined groupoid operation in the quotient $\bar{J}^{-1}(0)/G$. Some diagram chasing gives that the reduced presymplectic structure $\Omega_\text{red}$ is multiplicative and non-degenerate, and also that the target map $\t:(\G\red G,\Omega_\text{red})\to (M\red G,L_{M\red G})$ is a forward Dirac map. 
\end{proof}

\begin{rem}
The converse of this result is also true: if $(M\red G,L_{M\red G})$ is an integrable Dirac structure then $\Mt_0\subset\G(L)$ is an embedded Lie subgroupoid. The proof is a bit more involved, so we give here a sketch only in the case where $J^{-1}(0)\subset\G(L)$ has source 1-connected fibers. In the general case, there are certain bundles of discrete groups that one must also take into consideration, which are related with the issue of when does the equality $\G(M)\red G=\G(M/G)$ hold (see \cite{FeOrRa} for the case of Poisson manifolds). 

Under this assumption one sees that, on the one hand, $(\mu^{-1}(0),i_0^*L)$ is integrated by $\tilde{J}^{-1}(0)=J^{-1}(0)|_{\mu{-1}(0)}$, which is a groupoid with 1-connected source fibers ($i_0:\mu^{-1}(0)\hookrightarrow M$ denotes the inclusion). On the other hand, assuming that $(M\red G,L_{M\red G})$ is integrable, it follows that $(\mu^{-1}(0),i_0^*L)$ is also integrated by the groupoid $\pi_0^*\G(L_{M\red G})$, and this groupoid has source connected fibers ($\pi_0:\mu^{-1}(0)\to M\red G$ denotes the projection). It follows that there is an embedded, normal, Lie subgroupoid $\NN\subset \tilde{J}^{-1}(0)$ such that:
\[ \tilde{J}^{-1}(0)/\NN\simeq \pi_0^*\G(L_{M\red G}). \]
Finally, one checks that one has $\NN=\Mt_0$, so that this is an embedded Lie subgroupoid, as claimed.
\end{rem}

\subsection{Obstructions to Integrability of Hamiltonian quotients}      
\label{sub:obstr}                                         

In the previous paragraph we have shown that the bundle of $G$-monodromy groups $\Mt\subset (M,L)$ control the integrability of the Hamiltonian quotient of a proper, free Hamiltonian $G$-space $(M,L,G,\mu)$, when $\mu$ is regular. Notice that although we have not shown yet how to compute these groups, from the fact that they are the image of homomorphisms $\partial_m:\pi_1(G)\to\G(L)_m$, whose image is in the center, we can already conclude that the obstructions to integrability vanish whenever:
\begin{enumerate}
\item[(a)] $\pi_1(G)$ is finite;
\item[(b)] the isotropy groups $\G(L)_m$ have trivial center.
\end{enumerate}

Although the $G$-monodromy morphism can be hard to compute in general, we show now that for points in regular leaves  they have a nice geometric description, in terms of variations of the presymplectic areas of disks whose boundary is an orbit of a closed loop in $G$.

\subsubsection{The restricted $G$-monodromy groups}
The first step is to observe that since $\Mt_m$ is contained in the center of $\G(L)_m$, we can decide about its lack of discreteness by looking at its intersection with the connected component of the center: 
\begin{equation}
\label{eq:conn:monodromy}
 \Mt_m^0:=\Mt_m\cap C(\G(L)_m)^0.
\end{equation}
We have the following characterization of the groups $\Mt_m^0$: denote by $Z(\Ker\sharp_m)$ the center of the isotropy Lie algebra. A element $z\in Z(\Ker\sharp_m)$ determines an $A$-path $a(t)=z$, and we denote its $A$-homotopy class by $[z]\in G(L)$. In fact, the map $Z(\Ker\sharp_m)\to \G(L)_m$, $z\mapsto [z]$, is just the restriction of the exponential map $\exp:\Ker\sharp_m\to \G(L)_m$ to the center of the isotropy Lie algebra. In particular, $[z]\in C(\G(L)_m)^0$, and we have:

\begin{prop}
The groups \eqref{eq:conn:monodromy} are given by:
\[  \Mt_m^0=\bigl\{[a]\in \Mt_m : a\sim z, \text{ for some }z\in Z(\Ker\sharp_m)\bigr\} \]
In particular, if $\partial_m [\gamma]\in  \Mt_m^0$ then $\gamma_m(t):=\gamma(t)m$ is contractible in the presymplectic leaf $S_m$ containing $m$.
\end{prop}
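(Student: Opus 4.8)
The plan is to prove the two inclusions in the displayed formula for $\Mt_m^0$ and then deduce the contractibility statement. First I would note the easy inclusion: if $[a]\in\Mt_m$ and $a\sim z$ for some $z\in Z(\Ker\sharp_m)$, then $[a]=[z]=\exp(z)\in C(\G(L)_m)^0$, since $z\mapsto[z]$ is the restriction of the Lie-group exponential $\exp:\Ker\sharp_m\to\G(L)_m$ to the center of the isotropy Lie algebra, and the image of the center under $\exp$ lands in the identity component of the center of $\G(L)_m$. Hence such $[a]$ lies in $\Mt_m\cap C(\G(L)_m)^0=\Mt_m^0$, giving ``$\supseteq$''.

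For the reverse inclusion, take $[a]\in\Mt_m^0$, so $[a]=\partial_m[\gamma]=\tilde\Psi_\mu([\gamma],m)$ for some $[\gamma]\in\pi_1(G)$, and $[a]\in C(\G(L)_m)^0$. The key step is to connect $[a]$ to the identity \emph{inside} the center: choose a path $t\mapsto g(t)$ in $C(\G(L)_m)^0$ with $g(0)=1_m$, $g(1)=[a]$. Since $C(\G(L)_m)^0$ is a connected abelian Lie group, it is the image under $\exp$ of the center $Z(\Ker\sharp_m)$ of its Lie algebra — indeed $C(\G(L)_m)^0=\exp(Z(\Ker\sharp_m))$ because $\exp$ is surjective onto a connected abelian Lie group and the relevant Lie subalgebra of $A(\G(L)_m)=\Ker\sharp_m$ whose integral subgroup is $C(\G(L)_m)^0$ is exactly $Z(\Ker\sharp_m)$. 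So we may write $[a]=[z]$ for some $z\in Z(\Ker\sharp_m)$, i.e. $a\sim z$. This shows ``$\subseteq$''. I expect the main obstacle to be a clean justification that the Lie algebra of $C(\G(L)_m)^0$ is precisely $Z(\Ker\sharp_m)$: one must check that the isotropy group $\G(L)_m$ of the Weinstein groupoid has Lie algebra the isotropy algebra $\Ker\sharp_m$, that the adjoint action of $\G(L)_m$ on $\Ker\sharp_m$ integrates $\ad$, and hence that the kernel of the adjoint action (whose identity component gives $C(\G(L)_m)^0$) has Lie algebra the centralizer, i.e. $Z(\Ker\sharp_m)$ — here the source-connectedness of $\G(L)$ and the standard Lie theory of $\s$-connected groupoids is what makes this work.

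Finally, for the contractibility claim: suppose $\partial_m[\gamma]\in\Mt_m^0$, so by the above $a\sim z$ where $a$ is the $A$-path $t\mapsto(\xi(t)_M|_{\gamma(t)m},\d_{\gamma(t)m}\mu_{\xi(t)})$ from the definition of $\tilde\Psi_\mu$, with $\xi(t)=\dot\gamma(t)\gamma(t)^{-1}$, and $z$ is a \emph{constant} $A$-path with value in $\Ker\sharp_m$. Applying the anchor $\sharp$ and recalling that an $A$-homotopy projects to a homotopy (rel endpoints) of the base paths, the base path of $a$, which is $t\mapsto\gamma(t)m=:\gamma_m(t)$, is homotopic rel endpoints inside the leaf $S_m$ to the base path of $z$, which is the constant path at $m$ (since $\sharp z=0$). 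As $\gamma\in\pi_1(G)$ we have $\gamma(0)=\gamma(1)=e$, so $\gamma_m$ is a loop at $m$, and the homotopy exhibits it as contractible in $S_m$. This completes the proof.
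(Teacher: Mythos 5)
Your proof is correct and follows essentially the same route as the paper: both inclusions come from the fact that $z\mapsto[z]$ is the restriction of $\exp:\Ker\sharp_m\to\G(L)_m$ to $Z(\Ker\sharp_m)$, whose image fills out $C(\G(L)_m)^0$, and the contractibility of $\gamma_m$ in $S_m$ follows because an $A$-homotopy between the $A$-path representing $\partial_m[\gamma]$ and a (constant) central $A$-path projects to a leafwise homotopy of base paths. The only difference is cosmetic: where you spell out why the Lie algebra of $C(\G(L)_m)^0$ is $Z(\Ker\sharp_m)$, the paper simply invokes the standard fact that the exponential maps the center of a Lie algebra onto the identity component of the center of the group.
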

\begin{proof}
Let $[\gamma]\in\pi_1(M)$. Recall that $\partial_m\gamma$ is represented by the $A$-path:
\[ a_1(t)=(\xi(t)_M|_{\gamma(t)\cdot m},\d_{\gamma(t)\cdot m}\mu_{\xi(t)}), \]
where $\xi(t)=\dot{\gamma(t)}\gamma(t)^{-1}\in\gg$ (see \eqref{eq:inner:morphism}).  Note that the base path of $a_1(t)$ is precisely $\gamma_m(t):=\gamma(t)\cdot m$. Assume that $\partial_m\gamma\in C(\G(L)_m)^0$. Then there exists some $z\in Z(\Ker\sharp_m)$ such that $\exp(z)=\partial_m\gamma$, which means that $[a_1]=[a_0]$ where $a_0(t)$ is the $A$-path $a_0(t)=tz$, with base path the constant path $\gamma_0(t)=m$. Since $a_0$ and $a_1$ are $A$-homotopic, the corresponding base paths $\gamma_0$ and $\gamma_m$ are homotopic in the leaf $S_m$, so we have that $\gamma_m$ is contractible in the leaf $S_m$.
\end{proof}

This justifies the following definition:

\begin{defn}
We call \textbf{restricted $G$-monodromy group at $m$} the unique subgroup $\M_m\subset\Ker\sharp_m$ such that:
\[ \M_m=\set{z\in Z(\Ker\sharp_m): [z]\in \Mt_m }. \]
\end{defn}

Since the exponential map maps the center of a Lie algebra onto the connected component of the center of the Lie group, it is clear that $\M_m$ is the unique subgroup of $Z(\Ker\sharp_m)$ such that:
\[ \exp(\M_m)= \Mt_m^0=\Mt_m\cap C(\G(L)_m)^0.\]
In particular, the groups $\M_n$ contain the kernel of the exponential map. This kernel, denoted $\NN_m$, are the monodromy groups that control the integrability of $L$, and can be intrinsically defined by:
\[ \NN_m=\set{z\in Z(\Ker\sharp_m): z\sim 0_m\text{ as $A$-paths}}. \]

\subsubsection{The regular case}
We will show now that at regular presymplectic leaves the restricted monodromy groups have a nice geometric description in terms of transverse variations of presymplectic areas.

Fix a point $m\in M$, let $S$ be the presymplectic leaf through $m$. Given a path $\gamma:I\to G$, based at the identity, such that $\gamma_m(t):=\gamma(t)m$ is contractible in $S$, consider a 2-disk $\Gamma: D^2\to S$, which maps $(1,0)$ to $m$ and maps the boundary $\partial D$ to $\gamma_m$. The presymplectic area of $\Gamma$ is given, as usual, by
\[ A_{\omega}(\Gamma)= \int_{D^2} \Gamma^{*}\omega ,\]
where $\omega$ is the prsymplectic 2-form on the leaf $S$. 

By a \textbf{deformation} of $\Gamma$ we mean a family $\Gamma_{t}: D^2\to M$ of 2-disks parameterized by $t\in (-\eps,\eps)$, starting at
$\Gamma_{0}= \Gamma$, and such that for each fixed $t$ the disk $\Gamma_t$ has image lying entirely in a presymplectic leaf. The
\textbf{transversal variation} of $\Gamma_t$ (at $t=0$) is the class of
the tangent vector
\[
\var_{\nu}(\Gamma_{t})\equiv
\left[\left.\frac{\d}{\d t}\Gamma_{t}(1,0)\right|_{t=0}\right]\in
\nu_{m}(S).
\]
where $\nu_{m}(S)=T_SM/TS$ denotes the normal space to $S$ at $m$. Note that the conormal space $\nu^*_m(S)$ coincides with the isotropy Lie algebra $\Ker\sharp|_m$. The formula
\[ \langle A^{'}_{\omega}(\Gamma), \var_{\nu}(\Gamma_{t})\rangle=
\left.\frac{\d}{\d t}A_{\omega}(\Gamma_{t})\right|_{t=0},\] 
applied to different deformations of $\Gamma$, gives an element
\[ A^{'}_{\omega}(\Gamma)\in (\Ker\sharp|_m)/\NN_m,\]
which, as we shall see, only depends on the homotopy class $[\gamma]\in\pi_1(G)$.
\vskip 15 pt
\begin{center}
\includegraphics[width=.8\textwidth]{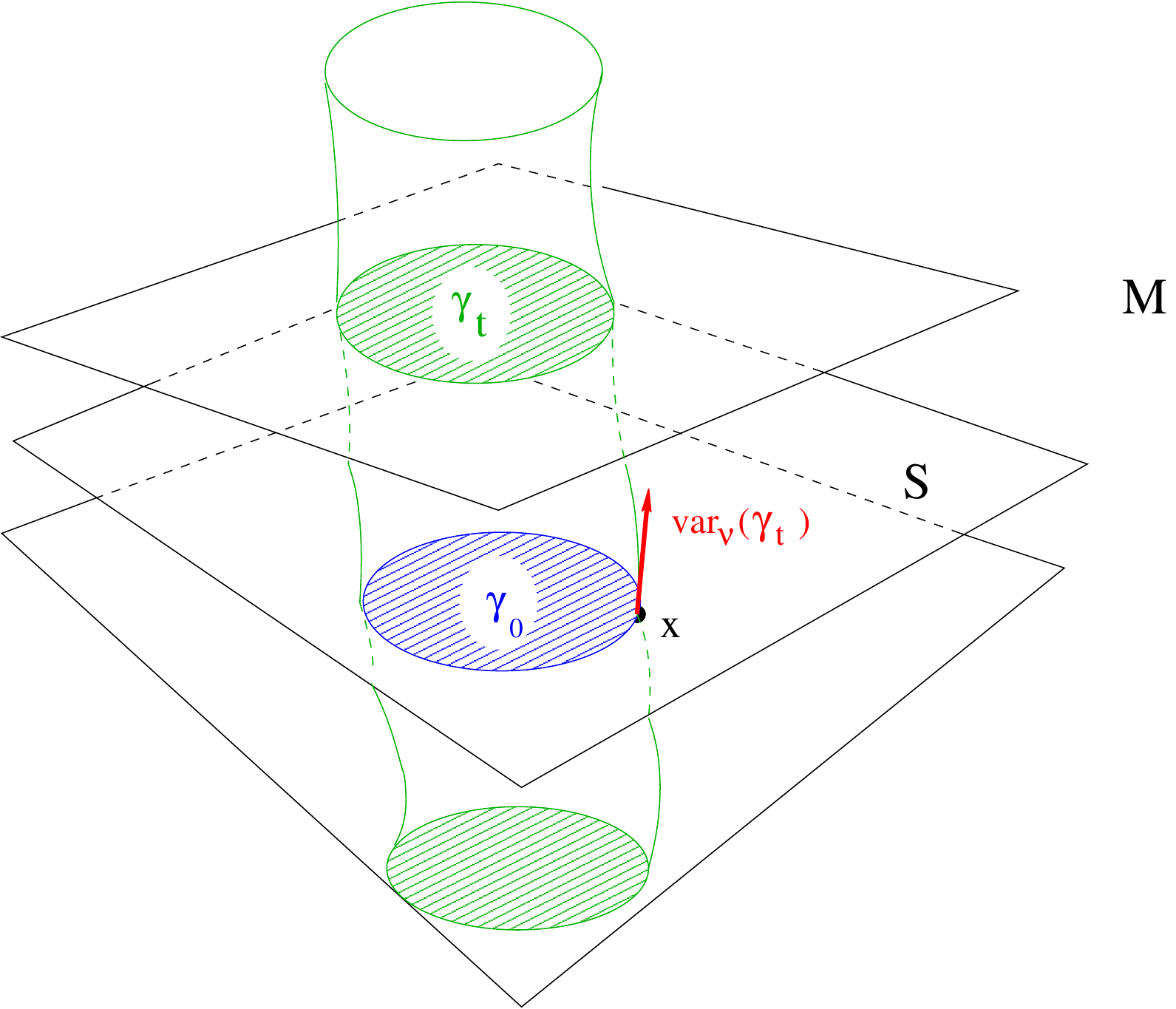}
\end{center}

Now, we have:

\begin{thm}
  \label{var-area}
  For a regular presymplectic leaf $S$ and $m\in S$: 
  \[ \M_{m}= \set{A^{'}_{\omega}(\Gamma): \text{ $\Gamma$ any disk in $S$ with boundary $\gamma(t)\cdot m$ with }[\gamma]\in\pi_1(G)}.\]
\end{thm}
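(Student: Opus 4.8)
The strategy is to interpret both sides of the claimed equality as subgroups of $Z(\Ker\sharp_m)$ and to show that, for a regular leaf $S$, the $G$-monodromy class $\partial_m[\gamma]\in\Mt_m^0$ is represented by an $A$-path $a_1(t)=(\xi(t)_M|_{\gamma(t)\cdot m},\d\mu_{\xi(t)})$ whose $A$-homotopy class, read through the identification $\Mt_m^0=\exp(\M_m)$, is exactly the transverse area derivative $A'_\omega(\Gamma)$. The main point is a comparison between the $A$-homotopy of $a_1$ to a constant $A$-path $a_0(t)=z$ and an explicit homotopy of the base paths $\gamma_m\simeq\mathrm{cst}$ realized by a disk $\Gamma$ in $S$. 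First I would fix $[\gamma]\in\pi_1(G)$ with $\partial_m[\gamma]\in\Mt_m^0$ (these are the only $[\gamma]$ that contribute to $\M_m$, by the previous Proposition); such a $\gamma_m$ is contractible in $S$, so a disk $\Gamma$ exists. Conversely, given any disk $\Gamma$ in $S$ with boundary $\gamma(t)\cdot m$, $[\gamma]\in\pi_1(G)$, one gets $\partial_m[\gamma]\in\Mt_m$, and one must check it lies in $\Mt_m^0$ — here regularity of $S$ is used, as it guarantees enough transverse deformations $\Gamma_t$ (the infinitesimal generators together with $TS$ span $T_SM$ along a regular leaf, since the moment map is regular, so every normal vector at $m$ is a transverse variation of some deformation), which is exactly what is needed for $A'_\omega(\Gamma)$ to be well defined as an element of $(\Ker\sharp_m)/\NN_m$ and for the identification with a genuine element of $\M_m/\NN_m$.

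The computational heart of the argument is the identity, for a deformation $\Gamma_t$ of $\Gamma$ with transverse variation $v=\var_\nu(\Gamma_t)$,
\[
\left.\frac{\d}{\d t}A_\omega(\Gamma_t)\right|_{t=0}=\langle z,v\rangle,
\]
where $z\in Z(\Ker\sharp_m)$ is the element such that $\exp(z)=\partial_m[\gamma]$. To prove this I would use the presymplectic-groupoid picture: the disk $\Gamma$ together with a choice of $\mu$-data along its boundary produces, via the evolution equation \eqref{evolution}, an $A$-homotopy between the $A$-path $a_1$ of $\partial_m[\gamma]$ and a constant $A$-path $a_0\equiv z$ with constant base path $m$; the element $z$ is extracted from the terminal value $\beta^1$ of the solution of the evolution equation, by the integral formula \eqref{hintegral}. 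Then deforming $\Gamma$ to $\Gamma_t$ and differentiating, the variation of $z$ in the direction normal to $S$ pairs with the transverse variation exactly as the variation of the $2$-form integral $\int_{D^2}\Gamma_t^*\omega$; concretely one applies the standard formula $\frac{\d}{\d t}\int\Gamma_t^*\omega=\int\Gamma_t^*(i_{\partial_t}\d\omega)+\int_{\partial D^2}\Gamma_t^*(i_{\partial_t}\omega)$, uses $\d\omega=0$ on the leaf, and identifies the boundary term, via the moment map condition $i_{\xi_M}\omega=\d\mu_\xi|_S$, with $\langle z,v\rangle$ after noting the boundary is the $G$-orbit of the loop $\gamma$. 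This is essentially the same computation that underlies the description of the ordinary monodromy groups $\NN_m$ in terms of spherical areas in the theory of integrability of Lie algebroids, adapted to disks with $G$-orbit boundary rather than spheres.

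With this identity in hand the two inclusions follow. If $z\in\M_m$, pick $[\gamma]$ with $\exp(z)=\partial_m[\gamma]$; then for every deformation, $\frac{\d}{\d t}A_\omega(\Gamma_t)|_0=\langle z,v\rangle$, so $A'_\omega(\Gamma)=z$ modulo $\NN_m$, and since both $z$ and $A'_\omega(\Gamma)$ are honest elements of the group $\M_m$ (not just of the quotient) one concludes $A'_\omega(\Gamma)=z$, giving $\subseteq$. Conversely, given any disk $\Gamma$ with $G$-orbit boundary $\gamma(t)\cdot m$, $[\gamma]\in\pi_1(G)$, the same identity shows that the well-defined element $A'_\omega(\Gamma)\in(\Ker\sharp_m)/\NN_m$ equals the $z$ attached to $\partial_m[\gamma]$, so $\partial_m[\gamma]\in\Mt_m^0$ and $A'_\omega(\Gamma)=z\in\M_m$, giving $\supseteq$. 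I expect the main obstacle to be the careful bookkeeping in the first two paragraphs: (i) producing the $A$-homotopy from the disk $\Gamma$ and extracting $z$ from \eqref{hintegral} in a way that is manifestly compatible with the moment map data along $\partial D^2$, and (ii) justifying, using regularity of $S$, that arbitrary normal directions $v\in\nu_m(S)$ are realized by leafwise deformations $\Gamma_t$ — without this, $A'_\omega(\Gamma)$ would not be pinned down and the equality could fail. The differentiation-under-the-integral step and the invocation of $\d\omega=0$ are routine by comparison.
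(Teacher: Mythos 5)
Your overall strategy (identify $\partial_m[\gamma]$ with $\exp(z)$ for a central $z$ built from a disk, then equate $z$ with the transverse area derivative) is in the right spirit, but the computational core of your argument has a genuine gap. You propose to prove the key identity $\left.\tfrac{\d}{\d t}A_\omega(\Gamma_t)\right|_{t=0}=\langle z,\var_\nu(\Gamma_t)\rangle$ by the Cartan-type formula $\tfrac{\d}{\d t}\int\Gamma_t^*\omega=\int\Gamma_t^*(i_{\partial_t}\d\omega)+\int_{\partial D^2}\Gamma_t^*(i_{\partial_t}\omega)$ and then discard the first term because ``$\d\omega=0$ on the leaf''. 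This does not parse: $\omega$ is only a leafwise form, $\partial_t\Gamma_t$ is not tangent to the leaves, so $\Gamma_t^*\omega$ in the $t$-direction is undefined unless you first extend the leafwise forms to an ambient $2$-form, and such an extension is in general \emph{not} closed -- indeed its failure to be closed carries the whole content. A decisive test case is the classical one (spheres in $\mathfrak{su}(2)^*$, no boundary at all): your formula would give zero variation of symplectic area, which is false. The paper instead chooses a splitting $\sigma:TS\to L_S$ adapted to the moment map, $\sigma(\xi_M)=(\xi_M,\d\mu_\xi)$, solves the $A$-homotopy equation \eqref{eq:homotopy:2} explicitly (using that $\gg_S$ is abelian over a regular leaf, so $\nabla^\sigma$ is flat and one may take $\psi=0$), obtaining $\partial_m[\gamma]=\exp\bigl(\int_\Gamma\Omega_\sigma\bigr)$, and then invokes the analogue of Proposition 5.4 of \cite{CrFe3}, $\tfrac{\d}{\d t}\int_{D^2}\Gamma_t^*\omega=\langle\int_{\Gamma_t}\Omega_\sigma,\tfrac{\d}{\d t}\Gamma_t(1,0)\rangle$. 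The curvature term $\Omega_\sigma$ is exactly what replaces your discarded interior term, and it is also what explains the nontrivial fact (which you assume without argument) that the area derivative depends only on the normal variation at the marked point and not on the deformation of the whole boundary.

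A secondary but real error: you justify ``$\partial_m[\gamma]\in\Mt_m^0$'' and the availability of arbitrary transverse variations by saying that the infinitesimal generators together with $TS$ span $T_SM$ because the moment map is regular. The generators $\xi_M$ are tangent to the presymplectic leaves, so they add nothing transverse; moment-map regularity is irrelevant here. What regularity of the leaf actually buys is (i) that $\gg_S$ is a bundle of abelian Lie algebras, so the element produced by the homotopy is automatically central and $\partial_m[\gamma]$ lands in $C(\G(L)_m)^0$, and (ii) the local foliated structure near $S$ needed to realize normal vectors by leafwise deformations $\Gamma_t$. Without replacing your Cartan-formula step by the splitting/curvature computation (or an equivalent argument), the proof does not go through.
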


\begin{proof}
Let us recall some standard results about $A$-homotopies (see Section 3 in \cite{CrFe3}). Let $S$ be a regular presymplectic leaf of $(M,L)$. A splitting $\sigma:TS\to L_S$ of the short exact sequence:
\[ 
\xymatrix{0\ar[r]& \gg_S\ar[r]& L_S\ar[r]^{\sharp} & TS\ar[r] \ar@/^1pc/@{-->}[l]^{\sigma}& 0}
\]
induces a connection $\nabla^{\sigma}$ on the bundle $\mathfrak{g}_S\to S$ by setting:
\[ (\nabla^{\sigma}_X v)(m):=[\sigma(X),v(m)]. \]
This splitting has curvature 2-form $\Omega_{\sigma}\in\Omega^2(S;\gg_S)$:
\[ \Omega_{\sigma}(X, Y):= \sigma ([X, Y])- [\sigma(X), \sigma(Y)]\ .\]
Since $S$ is a regular leaf, $\gg_S$ is a bundle of abelian Lie algebras, and it follows that $\nabla^{\sigma}$ is a flat connection:
\[ R_{\nabla^{\sigma}}(X,Y)v=[\Omega_{\sigma}(X,Y),v]=0. \]
We can also use $\sigma$ to identify $L_S$  with $TS\oplus \mathfrak{g}_S$ so the bracket becomes:
\[ [(X, v), (Y, w)]= ([X, Y], [v, w]+ \nabla_{X}^{\sigma}(w)- \nabla_{Y}^{\sigma}(v)- \Omega_{\sigma}(X, Y)) .\]
If we choose some connection $\nabla^{S}$ on $S$ and consider the connection $\nabla=(\nabla^{S},\nabla^{\sigma})$ on $L_S=TS\oplus
  \mathfrak{g}_S$, it follows that:
\[ T_{\nabla}((X, v), (Y, w))= (T_{\nabla^{M}}(X, Y),  \Omega_{\sigma}(X, Y)- [v, w]) \]
for all $X, Y\in TS$, $v, w\in \mathfrak{g}_S$. This shows that for the algebroid $A=L_S$, any $A$-homotopy $a(\epsilon,t)\d t + b(\epsilon,t)\d \epsilon$ takes the form $a=(\frac{\d\gamma}{\d t},\phi)$, $b=(\frac{\d\gamma}{\d\epsilon},\psi)$, where $\phi$ and $\psi$ are paths in $\mathfrak{g}_S$ satisfying:
\begin{equation}
\label{eq:homotopy:2}
  \partial_{t}\psi-\partial_{\epsilon}\phi=\Omega_{\sigma}(\frac{\d\gamma}{\d t},\frac{\d\gamma}{\d\epsilon})-[\phi, \psi].
\end{equation}

Now fix $m\in M$ and let $S$ be the (regular) presymplectic leaf of $(M,L)$ through $m$. Since the $G$-action is free and hamiltonian, we can choose a splitting $\sigma:TS\to L_S$ with the property that $\sigma(\xi_M)=(\xi_M,\d\mu_\xi)$, for any $\xi\in\gg$. Hence, given $[a_0]\in\M_m$, its expression relative to the splitting $\sigma$ takes the form $a_0=(\frac{\d\gamma}{\d t},0)$. Moreover, since $a_0\in\G(L)^0$, the loop $\gamma(t)$ is contractible in the leaf $S$. Denote by $\gamma(t,s)$ a smooth homotopy in the leaf $S$, joining $\gamma(t)$ to the constant path at $m$. If we set $\psi=0$ and let:
\[ \phi(t,\epsilon)=\int_0^\epsilon \Omega(\frac{\d\gamma}{\d t},\frac{\d\gamma}{\d\epsilon})\d \epsilon, \]
Then $a(t,\epsilon)=(\frac{\d\gamma}{\d t}(t,\epsilon),\phi(t,\epsilon))$, $b(t,\epsilon)=(\frac{\d\gamma}{\d\epsilon}(t,\epsilon),0)$ gives a homotopy between $a_0(t)$ and a $\gg_m$-path $v:I\to\gg_m$, with:
\[ v(t)=\int_0^1 \Omega(\frac{\d\gamma}{\d t},\frac{\d\gamma}{\d\epsilon})\d \epsilon. \]
Since $\gg_m$ is abelian, it follows that the class $[a_0]=[v(\cdot)]\in\G_m$ coincides with $\exp(v_0)$, where $v_0\in\gg_m$ is the element:
\[ v_0=\int_0^1 v(t)\d t= \int_{\gamma}\Omega. \]

Finally, an argument analogous to the proof of Proposition 5.4 in \cite{CrFe3} shows that for a family $\Gamma_{t}: D^2\to M$ of 2-disks parameterized by $t\in (-\eps,\eps)$, such that for each fixed $t$ the disk $\Gamma_t$ has image lying entirely in a presymplectic leaf, one has:
 \[ \frac{\d}{\d t} \int_{D^2}\Gamma_{t}^{*}\omega=
  \langle \int_{\Gamma_{t}}\Omega,
  \frac{\d}{\d t}\Gamma_{t}(1,0)\rangle.\] 
which completes the proof of the Theorem.
\end{proof}

\begin{rem}
If $\Gamma_1$ and $\Gamma_2$ are two disks in $S$ with the same boundary $\gamma(t)\cdot m$, then they determine a sphere $\sigma$ and hence a homotopy class $[\sigma]\in\pi_2(S,m)$. The results in \cite{CrFe2} then show that: 
\[ A^{'}_{\omega}(\Gamma_2)-A^{'}_{\omega}(\Gamma_1)\in \NN_m. \]
It follows that $A^{'}_{\omega}(\Gamma)\in (\Ker\sharp|_m)/\NN_m$ only depends on the homotopy class $[\gamma]\in\pi_1(G)$.  In fact, we can think of the map 
\[ \pi_1(G,S_m)\to (\Ker\sharp|_m)/\NN_m,\quad [\gamma]\mapsto A^{'}_{\omega}(\Gamma) \]
where $\pi_1(G,S_m)$ denotes the classes in $\pi_1(G)$ such that $\gamma_m$ is contractible in $S_m$, as the infinitesimal version the $G$-monodromy map.
\end{rem}

\subsubsection{An non-integrable Hamiltonian quotient} Let us give an example of an integrable Hamiltonian $G$-space $(M,L,G,\mu)$, where the action is proper, free and regular, and for which the Hamiltonian quotient $M\red G$ is not integrable. The origins of this example are in the theory of coupling Dirac structures and will be dealt with in a separate publication \cite{BrFe2}.

Let $P\to B$ be a principal $G$-bundle and fix a connection 1-form $\theta:TP\to\gg$. Assume also that $G$ acts on a Poisson manifold $(F,\pi_F)$ in a Hamiltonian fashion with moment map $\mu_F:F\to\gg^*$. The associated bundle $E=P\times_G F$ inherits a Dirac structure which we now recall.

The total space of the co-vertical bundle $S:=(\ker \d q)^*$ is naturally isomorphic as a $G$-space to $P\times \g^*$, with the diagonal $G$-action. 
The  $G$-principal connection $\theta:TP\to \g$, dualizes to give an embedding $i_\theta:S\hookrightarrow T^*P$ and hence determines a presymplectic form $\omega_S=i_\theta^*\omega_\text{can}$ on $S$. One checks easily that the action of $G$ on the Dirac manifold $(S,\omega_S)$ is hamiltonian with moment map $\mu_S:S\to\gg^*$ the projection $S=P\times\g^*\to\g^*$. 

Now let us consider the Dirac manifold $(M,L)=(S\times F , L_{\omega_{S}}\oplus L_{\pi_F})$. This Dirac manifold with the diagonal action of $G$ on $S \times F$, becomes a Hamiltonian $G$-space $(M,L,G,\mu)$ with moment map $\mu=\mu_{S}+\mu_F$. The $G$-action on $M$ is proper and free, since the action on the factor $S$ is proper and free. Also, the moment map $\mu$ is regular, since the presymplectic leaves of $M$ are of the form $S\times S_F$ where $S_F$ is a symplectic leaf of $(F,\pi_F)$ and $\mu_S$ is regular. We conclude from Theorem \ref{thm:Dirac:reduction}, that we have Dirac structures on $M/G$, $\mu^{-1}(0)$ and $M\red G\simeq P\times_G F=E$. 

Assume now that $(F,\pi_F)$ is an integrable Poisson manifold. It follows that the Dirac structure $L$ on $M=S\times F$ is also integrable, since $(S,\omega_S)$ is always integrable (see Example \ref{ex:presymplectic:int}) and the product of integrable Dirac structures is an integrable Dirac structure. Theorem \ref{thm:int:Dirac:quotients} then says that $M/G$ is also an integrable Dirac structure, but the following example shows that $M\red G$ may fail to integrable.

\begin{ex}
Let us consider the special case where $G=\Ss^1$, so that $\gg=\Rr$, $\pi_F=0$, and $\Ss^1$ acts trivially on $F$. Then we have that:
\[ M\red G=B\times F.\]
Denoting by $\omega_\theta\in \Omega^2(B,\Rr)$ the curvature 2-form of the connection, one sees that the Dirac structure $L_{M/G}$ has presymplectic foliation with leaves $S_f:=B\times {f}$, $f\in F$, where the presymplectic form is given by:
\[ \omega_{S_f}=\mu_F(f)\omega_\theta. \]
If the curvature form is ``fat'', i.e., is non vanishing, and $\mu_F$ is also non-vanishing, this is actually a Poisson structure.

For a specific example, let $p:\Ss^3\to \Ss^2$ be the Hopf fibration with a connection 1-form $\theta$ such that its curvature form $\omega_\theta\in\Omega^2(\Ss^2,\Rr)$ is nowhere vanishing and let $F=\Rr$. If $\mu_F:\Rr\to\Rr$ is also a non-vanishing function $\mu_F(r)=f(r)$, then $M\red G$ is the Poisson manifold $\Ss^2\times\Rr$ with symplectic leaves the spheres $S_r:=\Ss^2\times \{r\}$, $r\in\Rr$, where the symplectic form is given by:
\[ \omega_{S_r}=f(r)\omega_\theta. \]
This Poisson structure is not integrable around leaves $S_r$ where the function $f(r)$ has critical points (see \cite{CrFe1}). 
\end{ex}

This example is part of a general theory of geometric integration of Yang-Mills phase spaces, and more general coupling Dirac structures, which will appear in a separate paper \cite{BrFe2}

\bibliographystyle{amsalpha}


\end{document}